\declaretheorem[numberwithin=section]{theorem}
\declaretheorem[numberlike=theorem]{lemma, proposition, corollary, conjecture}
\declaretheorem[style=definition, numberlike=theorem]{definition, example}
\newcommand{\complex}{\mathbb{C}}
\newcommand{\integer}{\mathbb{Z}}
\newcommand{\rational}{\mathbb{Q}}
\newcommand{\aff}{\mathbb{A}}
\newcommand{\proj}{\mathbb{P}}
\newcommand{\monomials}{\mathcal{B}}
\DeclareMathOperator{\kr}{Kr}
\newcommand{\nrat}{N_{\rational}}
\newcommand{\nproj}{N_{\proj}}
\newcommand{\nptimesp}{N_{\proj^1 \times \proj^1}}
\DeclarePairedDelimiter{\abs}{\lvert}{\rvert}
\DeclarePairedDelimiter{\floor}{\lfloor}{\rfloor}
\DeclarePairedDelimiter{\norm}{\lVert}{\rVert}
\newcommand{\closure}[1]{\overline{#1}}
\title{Counting rational points on affine hypersurfaces}
\author{Anders Mah}
\address{School of Mathematics and Statistics, University of New South Wales,
Sydney, NSW 2052, Australia}
\email{anders.mah@student.unsw.edu.au}
\subjclass[2020]{11D45, 12E25}
\keywords{Counting rational points, Hilbert's Irreducibility Theorem}
\begin{document}

\begin{abstract}
    We prove an upper bound for the number of rational points of bounded height
    on irreducible affine hypersurfaces.
    More precisely,
    given an irreducible polynomial \(f \in \integer[X_1, \dots, X_n]\),
    we prove an upper bound
    on the number of points \((x_1, \dots, x_n) \in \rational^n\)
    such that \(f(x_1, \dots, x_n) = 0\)
    and each component has height at most \(B\).
    To prove this,
    we require a quantitative form of Hilbert's irreducibility theorem,
    where we bound the number of reducible specialisations
    of an irreducible polynomial at rational points of bounded height.
\end{abstract}

\maketitle

\section{Introduction}

\subsection{Rational points on hypersurfaces}

In this paper we study bounds for the number of solutions to the equation
\begin{equation}\label{eq:f_eq_zero}
    f(x_1, \dots, x_n) = 0,
\end{equation}
where \(f \in \integer[X_1, \dots, X_n]\)
is a polynomial with integer coefficients
and \((x_1, \dots, x_n) \in \rational^n\)
and the height of each variable is bounded.
Recall that the height of a rational number \(t \in \rational\) is defined as
\[
    H(t) \coloneq \max(\abs{x}, \abs{y}),
\]
where \(t = x / y\) is written in lowest terms,
that is, with relatively prime integers \(x\), \(y\).

More precisely,
we study bounds for the following counting function \(\nrat(f, B)\).
\begin{definition}
    For a polynomial \(f \in \integer[X_1, \dots, X_n]\),
    define
    \begin{multline*}
        \nrat(f, B_1, \dots, B_n) \coloneq
        \#\{(x_1, \dots, x_n) \in \rational^n : f(x_1, \dots, x_n) = 0\\
        \text{ and } H(x_i) \leq B_i \text{ for } i = 1, \dots, n\}.
    \end{multline*}
    In the case \(B_1 = \dots = B_n = B\),
    we denote this quantity as simply \(\nrat(f, B)\).
\end{definition}

Equation \eqref{eq:f_eq_zero} defines a hypersurface in affine space,
so a reformulation of the problem
is to bound the number of rational points on affine hypersurfaces.

Many of the results in the literature
give bounds for the number of integral points on affine hypersurfaces,
where the solutions to \eqref{eq:f_eq_zero}
satisfy \((x_1, \dots, x_n) \in \integer^n\).
One such bound is the Bombieri--Pila type bound,
which is a bound of the form
\(O_{d, n, \varepsilon}(B^{n - 2 + 1 / d + \varepsilon})\)
for irreducible polynomials \(f\).
\textcite{bombieriNumberIntegralPoints1989} introduced the determinant method
to prove a bound of this form for \(n = 2\).
Subsequent refinements have extended the bound to all \(n\)
and improved the dependence on \(d\) and \(\varepsilon\),
see, for example,
\cite{binyaminiSharpBoundsNumber2025,%
castryckDimensionGrowthConjecture2020,%
cluckersImprovementsDimensionGrowth2025,%
pilaDensityIntegralRational1995}.

We also note that the determinant method has been used to bound
the number of rational points on projective hypersurfaces,
where the points are ordered with respect to the projective height.

Let us return to the setting of rational points on affine hypersurfaces.
In this setting,
Bombieri--Pila type bounds are only known when \(n = 2\).
For a non-zero polynomial \(f \in \integer[X_1, \dots, X_n]\),
let \(\norm{f}\) denote the maximum among
absolute values of its coefficients and \(2\).
\textcite[Lemma~2]{heath-brownSquarefreeValues$n^2+1$2012}
proved that for
an irreducible polynomial \(f \in \integer[X_1, X_2]\)
of degree \(d_1\) in \(X_1\) and \(d_2\) in \(X_2\),
one has
\begin{equation}\label{eq:rational_points_two_variable}
    \nrat(f, B_1, B_2) \ll_{d_1, d_2, \varepsilon}
    B_1^{2 / d_2 + \varepsilon} \norm{f}^{\varepsilon}.
\end{equation}
We refer to Section~\ref{sec:notation} for notation and conventions.
Under the same conditions for \(f\),
\textcite[Theorem~3.1]{vermeulenDimensionGrowthAffine2024} proved the bound
\[
    \nrat(f, B_1, B_2)
    \ll (d_1 d_2)^{7 / 2} B_1^{1 / d_2} B_2^{1 / d_1}.
\]
Note that these bounds were originally stated as bounds
for the number of rational points on curves in \(\proj^1 \times \proj^1\).
This is equivalent to bounding rational points on curves in \(\aff^2\).

Our main results are the following bounds for \(\nrat(f, B)\).
\begin{theorem}\label{thm:rational_points_hypersurface}
    Let \(f \in \integer[X_1, \dots, X_n]\)
    be an irreducible polynomial of degree \(d\).
    Suppose that \(f\) has degree at least \(2\) in \(X_1\).
    Then
    \[
        \nrat(f, B) \ll_{d, n} B^{2 n - 3} (\log\norm{f} + \log B)^5.
    \]
\end{theorem}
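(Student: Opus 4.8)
The approach is to induct on the number of variables $n$, fibring over the last coordinate $X_n$ and controlling the fibres by a quantitative form of Hilbert's irreducibility theorem. For $n=2$ the bound is a polylogarithmic sharpening of the two-variable estimate \eqref{eq:rational_points_two_variable}: that estimate really concerns curves in $\proj^1\times\proj^1$, which is symmetric in its two factors, so I would apply it to $f(X_2,X_1)$ (equivalently, with the roles of the two factors exchanged), after which the hypothesis $\deg_{X_1}f\geq 2$ governs the exponent of $B$ and gives $\nrat(f,B)\ll_{d,\varepsilon}B^{1+\varepsilon}\norm{f}^{\varepsilon}$ for every $\deg_{X_2}f\geq 1$ (alternatively one may quote Vermeulen's coefficient-free bound when $\deg_{X_2}f\geq 2$). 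What is actually needed — and what a careful run of the determinant method provides once the dyadic decomposition is tracked — is that the factors $B^{\varepsilon}$ and $\norm{f}^{\varepsilon}$ can be replaced by a bounded power of $\log(\norm{f}+B)$, yielding $\nrat(f,B)\ll_{d}B(\log\norm{f}+\log B)^{5}$.

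For the inductive step, let $n\geq 3$ and assume the theorem in $n-1$ variables. For $x_n\in\rational$ with $H(x_n)\leq B$ set $f_{x_n}:=f(X_1,\dots,X_{n-1},x_n)$; multiplying by a suitable power of the denominator of $x_n$ and removing the content produces $g_{x_n}\in\integer[X_1,\dots,X_{n-1}]$ of degree $\leq d$, with the same zero set as $f_{x_n}$, irreducible over $\rational$ if and only if $f_{x_n}$ is, and satisfying $\norm{g_{x_n}}\ll_d\norm{f}B^{d}$, so that $\log\norm{g_{x_n}}\ll_d\log\norm{f}+\log B$. Since $f$ is irreducible and has positive degree in $X_1$, it is not divisible by any polynomial in $X_n$ alone, so $f_{x_n}\neq 0$ for every $x_n$, and hence $\nrat(f_{x_n},B)\ll_{d,n}B^{2n-4}$ by the elementary bound $O_{d,n}(B^{2(n-2)})$ for the number of rational points of height at most $B$ on a degree-$\leq d$ hypersurface in $\aff^{n-1}$ (a projection argument; and the count is $0$ when $f_{x_n}$ is a non-zero constant). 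Call $x_n$ \emph{good} if $g_{x_n}$ is irreducible over $\rational$ and $\deg_{X_1}f_{x_n}\geq 2$, and \emph{bad} otherwise. For good $x_n$ the inductive hypothesis applies to $g_{x_n}$ and gives $\nrat(f_{x_n},B)\ll_{d,n}B^{2n-5}(\log\norm{f}+\log B)^{5}$; summing over the $\ll B^{2}$ rationals of height at most $B$ bounds the contribution of the good fibres by $\ll_{d,n}B^{2n-3}(\log\norm{f}+\log B)^{5}$, and the exponent $5$ does not grow as the induction descends.

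It remains to handle the bad $x_n$. If $\deg_{X_1}f_{x_n}<\deg_{X_1}f$ then $x_n$ is a zero of a fixed non-zero one-variable polynomial of degree $\leq d$, so this occurs for only $O_d(1)$ values of $x_n$, contributing $\ll_{d,n}B^{2n-4}$. The remaining bad $x_n$ are exactly those for which $g_{x_n}$ factors over $\rational$, and bounding their number is the heart of the argument: what is required is a quantitative Hilbert irreducibility theorem asserting that, for irreducible $f\in\integer[X_1,\dots,X_n]$,
\[
    \#\{x_n\in\rational:\ H(x_n)\leq B\ \text{ and }\ f(X_1,\dots,X_{n-1},x_n)\ \text{reducible over }\rational\}\ \ll_{d,n}\ B\,(\log\norm{f}+\log B)^{5}.
\]
Granted this, these fibres contribute $\ll_{d,n}B\cdot B^{2n-4}(\log\norm{f}+\log B)^{5}=B^{2n-3}(\log\norm{f}+\log B)^{5}$, and adding the good contribution, the degree-drop contribution, and this one completes the induction.

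The whole difficulty lies in the displayed estimate. Over $\rational$ there are $\asymp B^{2}$ specialisation parameters of height at most $B$, so one is demanding a genuine square-root-type saving, and it must be uniform with only a polylogarithmic dependence on $\norm{f}$. Its proof splits according to whether the generic fibre of $f$ over the $X_n$-line is geometrically irreducible: when it is, a Bertini--Noether argument already restricts the reducible fibres to $O_d(1)$ values; when it is not, the reducibility of the specialised multivariate polynomial has to be encoded through the vanishing of a resultant- or discriminant-type auxiliary polynomial, whose zeros of bounded height are then counted by a determinant-method argument — and that count is where the real work is. With that ingredient in hand the fibration above is essentially bookkeeping; the single point requiring care, checked above, is that substituting a height-$B$ rational inflates the coefficient size by no more than a factor $B^{O_d(1)}$, so the polylogarithmic factor survives the induction intact.
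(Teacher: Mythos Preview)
Your argument is correct, but it is organised differently from the paper's. The paper does \emph{not} induct on $n$; instead it specialises all of $X_3,\dots,X_n$ at once to obtain a two-variable polynomial $g(X_1,X_2)$, then splits according to whether $g$ is irreducible (handled by Theorem~\ref{thm:rational_points_two_variable}), reducible but non-zero (handled by Schwartz--Zippel together with the $(n-2)$-variable Hilbert irreducibility bound of Corollary~\ref{thm:hilbert_irreducibility}), or identically zero (handled by a separate inductive lemma, Lemma~\ref{thm:f_specialisation_non_zero}). By contrast, you peel off one variable at a time and use only the single-variable Hilbert irreducibility bound (the paper's Lemma~\ref{thm:hilbert_irreducibility_r}), pushing the recursion into the main theorem itself rather than into the Hilbert irreducibility step. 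The two organisations are essentially equivalent in strength, since the paper's multivariable Corollary~\ref{thm:hilbert_irreducibility} is itself proved by exactly this kind of one-variable-at-a-time induction; your route has the pleasant feature that the specialised polynomial is never identically zero (so the analogue of Lemma~\ref{thm:f_specialisation_non_zero} is not needed), while the paper's route lands directly in the sharp two-variable setting without propagating the bound through $n-2$ inductive steps.

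Two small remarks. First, your base case $n=2$ is exactly the content of Theorem~\ref{thm:rational_points_two_variable} (with the roles of $X_1,X_2$ swapped, as you note); there is no need to invoke a separate ``careful run of the determinant method'', and in fact the bound obtained there is $B(\log B)^2$, independent of $\norm{f}$. Second, your sketch of how the displayed Hilbert irreducibility estimate is proved (Bertini--Noether dichotomy plus determinant method on an auxiliary polynomial) is not how the paper does it: the paper constructs the $P_\omega$ polynomials from symmetric functions of the roots (Lemma~\ref{thm:P_omega}) and then counts rational points on the associated projective curves via Lemma~\ref{thm:projective_curve_bound}. Since you treat the estimate as a black box this does not affect the correctness of your reduction, but the mechanism is different.
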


When \(f\) is monic in \(X_1\),
we prove a Bombieri--Pila type bound for \(n \geq 2\).
\begin{theorem}\label{thm:rational_points_hypersurface_monic}
    Let \(f \in \integer[X_1, \dots, X_n]\)
    be an irreducible polynomial of degree \(d\).
    Let \(d_1\) be the degree in \(X_1\).
    Suppose that \(f\) is monic in \(X_1\).
    Then
    \[
        \nrat(f, B)
        \ll_{d, n} B^{2 (n - 2 + 1 / d_1)} (\log\norm{f} + \log B)^7.
    \]
\end{theorem}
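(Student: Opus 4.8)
The plan is to use the monicity hypothesis to replace rational solutions by integral data of controlled size, and then to bound that data by combining the quantitative Hilbert irreducibility theorem of this paper with a sharp Bombieri--Pila type estimate for integral points on affine hypersurfaces. If \(d_1 = 1\) the statement is immediate: then \(f = X_1 + g\) with \(g \in \integer[X_2, \dots, X_n]\), so the rational solutions with \(H(x_i) \le B\) inject into the set of tuples \((x_2, \dots, x_n) \in \rational^{n-1}\) with \(H(x_i) \le B\), of which there are \(O_n(B^{2(n-1)})\), and \(2(n-2+1/d_1) = 2(n-1)\). Assume henceforth \(d_1 \ge 2\). Two remarks set the stage: \(f\) is irreducible over \(\rational\) (being a primitive irreducible element of \(\integer[X_1, \dots, X_n]\)) and, being monic in \(X_1\), it is primitive, hence irreducible by Gauss's lemma, in \(\rational(X_2, \dots, X_n)[X_1]\); and for every \(\mathbf{b} \in \rational^{n-1}\) the fibre polynomial \(f(X_1, \mathbf{b})\) is monic of degree exactly \(d_1 \ge 2\), so it can have a rational root only if it is reducible over \(\rational\). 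The further role of monicity is to let us do substantially better than merely counting the base points at which \(f\) specialises reducibly, which would only reproduce the exponent of Theorem~\ref{thm:rational_points_hypersurface}.

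For the main step, write a rational solution as \(x_i = u_i/v_i\) in lowest terms. Turning \(f(x_1, \dots, x_n) = 0\) into an identity in the \(u_i, v_i\) and isolating the leading term in \(X_1\) -- which by monicity is \(u_1^{d_1}\) times a monomial in \(v_2, \dots, v_n\) -- forces \(v_1 \mid (v_2 \cdots v_n)^{d}\). Hence, after substituting \(X_i \mapsto u_i/v_i\) for \(i \ge 2\) and clearing denominators, the rational root \(x_1\) becomes, up to rescaling by a divisor of \((v_2 \cdots v_n)^{d}\), an \emph{integral} root of a monic integer polynomial in one variable of degree \(d_1\), and the condition \(H(x_1) \le B\) imposes a strong divisibility restriction on this integer. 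Carrying this out uniformly and decomposing the ranges of the \(v_i\) and of the coordinates dyadically -- which costs a bounded power of \(\log(\norm{f} + B)\) -- bounds \(\nrat(f, B)\) by a sum, over the resulting denominator data \(\mathbf{v}\), of the numbers of integral points lying in explicit boxes on auxiliary affine hypersurfaces \(V_{\mathbf{v}} \subset \aff^n\). Each \(V_{\mathbf{v}}\) arises from \(f\) by an invertible monomial change of variables followed by a rescaling, so it is irreducible, of degree at most \(d\), monic of degree \(d_1\) in its first coordinate, and of coefficient height polynomial in \(\norm{f}\) and \(B\).

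On each \(V_{\mathbf{v}}\) the two estimates combine. By the quantitative Hilbert irreducibility theorem, outside a thin exceptional set of base points -- cut out by the vanishing of resolvent polynomials of \(V_{\mathbf{v}}\) of degree bounded in terms of \(d\) and of height polynomial in \(\norm{f}\) and \(B\) -- the one-variable fibre is irreducible over \(\rational\) and so carries no integral point (as \(d_1 \ge 2\)), while on the remaining base points the sharp Bombieri--Pila bound, in the form with only polylogarithmic dependence on the coefficient height, controls the count. Balancing the thin-set estimate against the sum over \(\mathbf{v}\), and using the extra divisibility of the root coming from \(H(x_1) \le B\), produces the exponent \(2(n-2+1/d_1)\): the \(1/d_1\) is the Bombieri--Pila exponent governed by the degree of the fibre in the monic variable, while the factor \(2\) (and the shift from \(n-1\) to \(n-2+1/d_1\)) reflects summing over the essentially \(B^{2}\)-sized family of numerator/denominator pairs in each direction rather than over a single integral box.

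The main obstacle I anticipate is the uniformity of this argument: one must verify that the resolvent polynomials controlling reducibility of the fibres of \(V_{\mathbf{v}}\) have height polynomial in \(\norm{f}\) and \(B\), so that each application of the Bombieri--Pila estimate costs only \(O((\log(\norm{f} + B))^{O(1)})\), and that the exceptional thin sets together with the various denominator-cleared box shapes can be summed over the tuples \(\mathbf{v}\) while losing no more than the stated power \((\log \norm{f} + \log B)^{7}\). The quantitative Hilbert irreducibility theorem itself -- bounding the rational specialisations of bounded height at which an irreducible polynomial becomes reducible -- is the other substantial ingredient and is proved separately.
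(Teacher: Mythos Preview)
Your outline does not produce the exponent \(2(n-2+1/d_1)\); at best it recovers the exponent \(2n-3\) of Theorem~\ref{thm:rational_points_hypersurface}. After fixing a denominator tuple \(\mathbf{v}=(v_1,\dots,v_n)\), the argument you sketch for the integral points on \(V_{\mathbf{v}}\) says: away from the Hilbert-exceptional base locus the fibre in \(u_1\) has no root, and over the exceptional locus you invoke ``the sharp Bombieri--Pila bound''. But the exceptional locus in the integral base \([-B,B]^{n-1}\) has size \(\ll B^{\,n-2}(\log B)^{O(1)}\), and each such base point carries at most \(d_1\) fibre points, so the count on \(V_{\mathbf{v}}\) is \(\ll B^{\,n-2}(\log B)^{O(1)}\). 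Summing over the \(\gg B^{\,n-1}\) tuples \((v_2,\dots,v_n)\) (the divisor bound on \(v_1\) saves only \(B^{\varepsilon}\)) gives \(B^{2n-3}(\log B)^{O(1)}\). There is no Bombieri--Pila estimate for integral points on affine hypersurfaces with exponent \(n-2+1/d_1\) in place of \(n-2+1/d\), and your appeals to ``balancing'' and ``the extra divisibility of the root coming from \(H(x_1)\le B\)'' do not indicate any mechanism for the further saving of \(B^{\,1-2/d_1}\) that is required.

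The paper does not pass through integral points at all. It observes that \(\nrat(f,B)\le d_1\cdot R(f,B)\), where \(R(f,B)\) is the number of \((x_2,\dots,x_n)\in\rational^{n-1}\) of height at most \(B\) for which \(f(X_1,x_2,\dots,x_n)\) has a rational root, and bounds \(R(f,B)\) by induction on \(n\) (Lemma~\ref{thm:number_rational_specialisation_with_root_multivariable}). The exponent \(2/d_1\) enters only in the base case \(n=2\), directly from Theorem~\ref{thm:rational_points_two_variable}, the bound \(\nrat(f,B_1,B_2)\ll_{d_1,d_2} B_1^{2/d_2}(\log B_2)^2\) for \emph{rational} points on plane curves proved via the determinant method on \(\proj^1\times\proj^1\). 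In the inductive step one specialises all but two variables and splits on whether the resulting polynomial in \((X_1,T_1)\) is irreducible; the case that actually drives the exponent is when the specialisation has a factor \emph{linear} in \(X_1\), and this is handled not by any Bombieri--Pila bound but by choosing a single auxiliary integer \(t_1\) of small height with \(f(X_1,t_1,T_2,\dots,T_n)\) irreducible, so that every such base point is already counted by \(R(f_{t_1},B)\), to which the inductive hypothesis applies.
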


The Schwartz--Zippel bound,
see Lemma~\ref{thm:schwartz_zippel_rational},
gives the trivial bound \(\nrat(f, B) \ll_n d B^{2 n - 2}\)
for non-zero polynomials \(f\).
In Theorem~\ref{thm:rational_points_hypersurface},
the exponent of \(B\) is \(2 n - 3\),
which improves on the exponent \(2 n - 2\) from the Schwartz--Zippel bound
when \(f\) is irreducible and has degree at least \(2\) in \(X_1\).
In Theorem~\ref{thm:rational_points_hypersurface_monic},
the exponent of \(B\) is \(2 n - 4 + 2 / d_1\),
which is an improvement on Theorem~\ref{thm:rational_points_hypersurface}
when \(d_1 \geq 3\).

We also prove a variant of the bound \eqref{eq:rational_points_two_variable},
where the \(B_1^{\varepsilon} \norm{f}^{\varepsilon}\) factor
is replaced with \((\log B_2)^2\).
\begin{theorem}\label{thm:rational_points_two_variable}
    Let \(f \in \integer[X_1, X_2]\) be an irreducible polynomial
    of degree \(d_1\) in \(X_1\) and degree \(d_2\) in \(X_2\).
    Then
    \[
        \nrat(f, B_1, B_2)
        \ll_{d_1, d_2} B_1^{2 / d_2} (\log B_2)^2.
    \]
\end{theorem}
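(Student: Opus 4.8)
The plan is to run the determinant method, reproving the bound~\eqref{eq:rational_points_two_variable} of Heath--Brown while recording its $\varepsilon$-loss as an explicit power of $\log B_2$. First some reductions. If $d_2 = 1$ then $x_2$ is a rational function of $x_1$, so $\nrat(f, B_1, B_2) \leq \#\{x_1 \in \rational : H(x_1) \leq B_1\} \ll B_1^2 = B_1^{2/d_2}$; hence we may assume $d_2 \geq 2$, and also that $B_1, B_2$ exceed a constant depending on $d_1, d_2$. If $f$ is not geometrically irreducible, its absolutely irreducible factors are pairwise coprime and every rational zero of $f$ lies on all of them, so there are only $O_{d_1, d_2}(1)$ rational zeros in total; hence we may assume the curve $C \colon f = 0$ in $\proj^1 \times \proj^1$ is geometrically integral, so that $1, X_2, \dots, X_2^{d_2 - 1}$ is a basis of $\closure{\rational}(C)$ over $\closure{\rational}(X_1)$, and we may discard the $O_{d_1, d_2}(1)$ rational singular points of $C$.

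The core is a single determinant estimate. Fix an integer $e \geq 2$, put $s = e\,d_2$, and take the $s$ monomials $X_1^i X_2^j$ with $0 \leq i < e$ and $0 \leq j < d_2$, of bidegree $(e-1, d_2-1)$. Their restrictions to $C$ are linearly independent over $\closure{\rational}$: a vanishing combination, grouped as $\sum_j p_j(X_1) X_2^j$ with $\deg p_j < e$, would be a polynomial of $X_2$-degree $< d_2$ divisible by $f$, hence zero. Now fix a prime $p$ of good reduction for $C$, a smooth point $P \in C(\mathbb{F}_p)$, and $s$ rational zeros of $f$ with $H(x_1) \leq B_1$, $H(x_2) \leq B_2$, all reducing to $P$. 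Writing these in homogeneous coordinates and clearing denominators makes the monomial values into integers of absolute value at most $B_1^{e-1} B_2^{d_2-1}$, giving an $s \times s$ integer matrix $M$ with $\abs{\det M} \leq s^{s/2}(B_1^{e-1} B_2^{d_2-1})^s$ by Hadamard. On the other hand, expanding the monomials in a $p$-adic uniformiser of $C$ at a $\integer_p$-lift of $P$, the points correspond to parameters in $p\integer_p$, and only those terms of the (column-wise) Leibniz expansion of $\det M$ in which the $s$ points contribute pairwise distinct powers of the uniformiser can be nonzero; hence $p^{0 + 1 + \dots + (s-1)} = p^{\binom{s}{2}}$ divides $\det M$. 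Consequently, if $\tfrac{s-1}{2}\log p > \tfrac{1}{2}\log s + (e-1)\log B_1 + (d_2-1)\log B_2$, the determinant vanishes for every such choice of $s$ zeros, so all rational zeros of $f$ reducing to $P$ lie on a common curve $g = 0$ with $g$ a nonzero combination of the chosen monomials, in particular not divisible by $f$. By B\'ezout, for instance via $\operatorname{Res}_{X_2}(f, g)$, there are then $O_{d_1, d_2}(e)$ such zeros.

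Summing over the $O_{d_1, d_2}(p)$ points of $C(\mathbb{F}_p)$ bounds the number of zeros of good reduction by $O_{d_1, d_2}(e\,p)$; zeros reducing to one of the $O_{d_1, d_2}(1)$ singular points of $C \bmod p$ are treated by passing to the normalisation, where each reduces to a smooth point on one of at most $d_1 + d_2$ branches, costing only $O_{d_1, d_2}(1)$ extra residue classes. The displayed condition on $p$ holds once $p > c_{d_1, d_2}\, B_1^{2/d_2} B_2^{3/e}$, so the plan is to take $e \asymp \log B_2$, making $B_2^{3/e} \ll 1$, and $p$ a prime of size $\asymp B_1^{2/d_2}$; then $e\,p \ll_{d_1, d_2} B_1^{2/d_2}\log B_2$.

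The chief difficulty is uniformity in $\norm{f}$: the prime $p$ must avoid the $O_{d_1, d_2}(\log\norm{f})$ primes of bad reduction of $C$, which is harmless when $\log\norm{f} \ll B_1^{2/d_2}$ but in general forces $p$ somewhat larger, or requires summing the estimate over a short range of primes. Since any rational zero with $H(x_1) \leq B_1$ automatically satisfies $H(x_2) \ll_{d_1} \norm{f}\,B_1^{d_1}$, the second height bound only bites when $\log B_2 \leq \log\norm{f} + d_1\log B_1$, and reconciling these competing scales is what turns $\log B_2$ into $(\log B_2)^2$; the remaining regime $B_2 \leq B_1^{d_1/d_2}$ can instead be dealt with at once by the bound of \textcite{vermeulenDimensionGrowthAffine2024}, which there already gives $O_{d_1, d_2}(B_1^{2/d_2})$. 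Making this bookkeeping rigorous --- or, more cleanly, recasting the whole argument in the global determinant method, which is uniform in $\norm{f}$ from the start --- is the step I expect to be most delicate.
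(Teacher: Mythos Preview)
Your determinant-method outline is correct and well set up --- the monomial basis of bidegree $(e-1,d_2-1)$, the $p^{\binom{s}{2}}$ divisibility, the choice $e\asymp\log B_2$, and the B\'ezout finish are all fine --- but it is the \emph{local} (single-prime) variant, whereas the paper runs the \emph{global} determinant method from the start, exactly the alternative you name in your last sentence. Concretely: after bihomogenising to $\proj^1\times\proj^1$ and reducing (Lemmas~\ref{thm:irreducible_but_not_absolutely_irreducible_bound} and~\ref{thm:widetilde_f_large_coefficient}) to $f$ primitive, absolutely irreducible, with $\abs{c_f}\geq\norm{f}/C_d$, the paper takes an auxiliary polynomial of bidegree $(M_1,M_2)$ with $M_2/M_1\in[1/(d_1\log B_2),\,\min(1/d_1,1/\log B_2)]$, bounds $\log\Delta$ from below by Vermeulen's global estimate Lemma~\ref{thm:determinant_lower_bound} (which carries a $-\log b(f)$ term), and from above by Bombieri--Vaaler rather than Hadamard. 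The Bombieri--Vaaler input is what buys $\norm{f}$-uniformity: integer kernel vectors of the evaluation matrix are coefficient vectors of integer multiples of $f$, hence have sup-norm $\geq\abs{c_f}\geq\norm{f}/C_d$, so Lemma~\ref{thm:determinant_upper_bound} injects a term $-(m-k)\log\norm{f}$ into the upper bound; this then plays off against $\log b(f)\ll\max(\log\log\norm{f}-\log(d_1d_2),0)$ from Lemma~\ref{thm:b_f_upper_bound} to leave a clean $(\log B_2)^2$ with no residual $\norm{f}$-dependence.

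The gap in your proposal is precisely the one you flag: with a single prime, choosing $p\asymp B_1^{2/d_2}$ of good reduction is not guaranteed when $\log\norm{f}$ is large relative to $B_1^{2/d_2}$. Your suggested patches do not quite close it --- invoking \textcite{vermeulenDimensionGrowthAffine2024} handles only $B_2\leq B_1^{d_1/d_2}$, and the observation $H(x_2)\ll\norm{f}\,B_1^{d_1}$ only lets you assume $\log B_2\ll\log\norm{f}+d_1\log B_1$, leaving the intermediate range $B_1^{d_1/d_2}<B_2\ll\norm{f}\,B_1^{d_1}$ uncovered. Averaging the local estimate over a range of primes does work, but carried out rigorously that \emph{is} the global method; the paper simply adopts it wholesale.
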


To prove Theorem~\ref{thm:rational_points_hypersurface},
we consider specialisations of \(n - 2\) variables.
Then we consider the cases where the specialisation
(which is a bivariate polynomial)
is irreducible, reducible and zero,
and bound the contribution to \(\nrat(f, B)\)
from each cases separately.
We use Theorem~\ref{thm:rational_points_two_variable}
to bound the number of points coming from the irreducible case.
We use the Schwartz--Zippel bound, see Lemma~\ref{thm:schwartz_zippel_rational},
to bound the number of points coming from the reducible case.
To bound the number of times the reducible case occurs,
we use Corollary~\ref{thm:hilbert_irreducibility},
which is a quantitative form of Hilbert's irreducibility theorem.

We use a similar approach
to prove Theorem~\ref{thm:rational_points_hypersurface_monic},
but we take more care when bounding the contribution
from the reducible case.

To prove Theorem~\ref{thm:rational_points_two_variable},
we modify the proof of \cite[Theorem~3.1]{vermeulenDimensionGrowthAffine2024}.
The approach is to use the determinant method
to construct and bound the degree of an auxiliary polynomial \(g\),
which is not divisible by \(f\)
and vanishes at the zeros of \(f\) of height at most \((B_1, B_2)\).

\subsection{Quantitative Hilbert's irreducibility theorem}

The classical Hilbert's irreducibility theorem
states that for an irreducible polynomial
\(f \in \integer[T_1, \dots, T_s, Y_1, \dots, Y_r]\),
there are infinitely many \((t_1, \dots, t_s) \in \rational^s\)
such that the specialisation
\[
    f(t_1, \dots, t_s, Y_1, \dots, Y_r) \in \rational[Y_1, \dots, Y_r]
\]
is also irreducible, see, for example,
\cite[Section~9.6]{bombieriHeightsDiophantineGeometry2006}
or \cite[Chapter~13]{friedFieldArithmetic2023}.

The quantitative forms of Hilbert's irreducibility theorem
give bounds for the number of \((t_1, \dots, t_s)\)
such that the specialisation is reducible.
Many of these results prove upper bounds
for the number of \((t_1, \dots, t_s) \in \integer^s\)
such that \(\abs{t_i} \leq B\) for \(i = 1, \dots, s\)
and \(f(t_1, \dots, t_s, Y_1, \dots, Y_r)\)
is reducible in \(\rational[Y_1, \dots, Y_r]\),
see, for example,
\cite{castilloHilbertsIrreducibilityTheorem2017,%
cluckersImprovementsDimensionGrowth2025,%
cohenDistributionGaloisGroups1981,%
debesBoundsHilbertsIrreducibility2008,%
paredesEffectiveHilbertsIrreducibility2024,%
walkowiakTheoremeDirreductibiliteHilbert2005,%
zywinaHilbertsIrreducibilityTheorem2010}.

In order to prove the main results for \(\nrat(f, B)\),
we need a quantitative form of Hilbert's irreducibility theorem
where \((t_1, \dots, t_s) \in \rational^s\).
More precisely,
we need to bound the following quantity.
\begin{definition}
    For \(f \in \integer[T_1, \dots, T_s, Y_1, \dots, Y_r]\),
    define
    \begin{multline*}
        S(f, B_1, \dots, B_s) \coloneq \#\{(t_1, \dots, t_s) \in \rational^s :\\
        f(t_1, \dots t_s, Y_1, \dots, Y_r) \text{ is reducible in }
        \rational[Y_1, \dots, Y_r]\\
        \text{ and } H(t_i) \leq B_i \text{ for } i = 1, \dots, s\}.
    \end{multline*}
    In the case when \(B_1 = \dots = B_s = B\),
    we denote this quantity as simply \(S_s(f, B)\).
\end{definition}
\begin{theorem}\label{thm:hilbert_irreducibility_non_uniform}
    Let \(f \in \integer[T_1, \dots, T_s, Y_1, \dots, Y_r]\)
    be an irreducible polynomial of degree \(d\).
    Assume that \(B_1 \leq B_2 \leq \dots \leq B_s\).
    Then
    \[
        S(f, B_1, \dots, B_s)
        \ll_{d, r, s} B_1 \left(\prod_{i = 2}^s B_i^2\right)
        (\log\norm{f} + \log B_s)^5.
    \]
\end{theorem}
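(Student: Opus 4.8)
The plan is to reduce to the case of a single companion variable (\(r=1\)), to parametrise the possible factors of a specialisation \(f(t_1,\dots,t_s,Y)\) by a family of ``factorisation varieties'', and then to bound rational points of bounded height on these varieties by slicing them into plane curves and invoking Theorem~\ref{thm:rational_points_two_variable}; throughout, the Schwartz--Zippel bound (Lemma~\ref{thm:schwartz_zippel_rational}) will absorb the contributions of lower-dimensional or degenerate loci.

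\emph{Reductions.} If \(f\) is geometrically irreducible, i.e.\ irreducible over \(\closure{\rational(T_1,\dots,T_s)}\), then outside a proper subvariety \(Z_0\subsetneq\aff^s\) of degree \(\ll_d 1\) the specialisation \(f(\underline t,\underline Y)\) is again irreducible over \(\rational\), so the reducible locus lies in \(Z_0\) and Schwartz--Zippel gives \(S(f,B_1,\dots,B_s)\ll_{d,r,s}\prod_{i=2}^s B_i^2\), which is within the stated bound. Otherwise \(f\) factors over \(\closure{\rational(\underline T)}\) into a nontrivial Galois orbit of absolutely irreducible polynomials, defined over a field \(E=\rational(\underline T)(\theta)\) of degree \(\ll_d 1\) over \(\rational(\underline T)\); if \(m_\theta\in\integer[\underline T,Z]\) is a minimal polynomial of a primitive element \(\theta\), cleared of denominators, of degree \(\ll_d 1\) and with \(\norm{m_\theta}\ll\norm{f}^{O_d(1)}\), then — since the Galois action on the specialised geometric factors of \(f\) matches that on the roots of \(m_\theta(\underline t,Z)\) — reducibility of \(f(\underline t,\underline Y)\) over \(\rational\) forces, outside a proper subvariety handled by Schwartz--Zippel, the reducibility of \(m_\theta(\underline t,Z)\) over \(\rational\). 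Thus we may replace \(f\) by \(m_\theta\) and assume \(r=1\); write \(N=\deg_Y f\), noting \(S=0\) if \(N\le 1\), so assume \(N\ge 2\). Clearing denominators in \(\underline t\) multiplies coefficients by at most \(\prod_i B_i^{O_d(1)}\), so we regard \(f(\underline t,Y)\) as an integer polynomial of height at most \(B'=\norm{f}^{O_d(1)}B_s^{O_d(1)}\). Finally, the \(\ll_{d,s}\prod_{i=2}^s B_i^2\) tuples \(\underline t\) on the proper subvariety where the leading coefficient in \(Y\), the \(Y\)-content, or the \(Y\)-discriminant of \(f\) vanishes contribute within the stated bound, so we may assume \(f(\underline t,Y)\) has degree exactly \(N\) and is separable.

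\emph{Factorisation varieties.} If \(f(\underline t,Y)\) is reducible over \(\rational\) it has a monic rational factor of degree \(k\) for some \(1\le k\le N/2\), whose coefficients are, by Mignotte's bound on coefficients of factors, rationals of height \(\ll B'\). Hence \(\underline t\) lies in the image under the projection \(\pi\colon\aff^{s+k}\to\aff^s\) of a rational point of height \(\ll B'\) on
\[
    V_k=\bigl\{(\underline t,u_1,\dots,u_k)\in\aff^{s+k}:\;
    (Y^k+u_1 Y^{k-1}+\dots+u_k)\mid f(\underline t,Y)\bigr\},
\]
a variety cut out over \(\integer\) by polynomials of degree \(\ll_d 1\). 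Since \(f\) is irreducible over \(\rational(\underline T)\) and \(k<N\), it has no factor of degree \(k\) over \(\rational(\underline T)\); therefore no component of \(V_k\) maps birationally to \(\aff^s\), so every component either fails to dominate \(\aff^s\) — these lie over a proper subvariety of degree \(\ll_d 1\), giving \(\ll_{d,s}\prod_{i=2}^s B_i^2\) by Schwartz--Zippel — or dominates \(\aff^s\) with degree \(e\ge 2\). (Galois-theoretically, the dominant components are the intermediate covers of \(\aff^s\) attached to the subgroups of the monodromy group that force intransitivity on the fibre.)

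\emph{Counting on the covers, and the main obstacle.} Fix a component \(W\) dominating \(\aff^s\) with degree \(e\ge 2\). Replacing \(u_1,\dots,u_k\) by a single generic integral linear combination \(u\) of size \(\ll_d 1\), the map \(W\to\aff^{s+1}_{(\underline T,u)}\) is birational onto an irreducible hypersurface \(g(T_1,\dots,T_s,u)=0\) with \(\deg_u g=e\ge 2\). For \((t_2,\dots,t_s)\) outside a proper subvariety of degree \(\ll_d 1\) — here one needs a Bertini-type irreducibility theorem for coordinate slices — the curve \(g(T_1,t_2,\dots,t_s,u)=0\) is irreducible, of degree \(e\ge 2\) in \(u\) and of bounded total degree, so Theorem~\ref{thm:rational_points_two_variable} bounds its rational points with \(H(T_1)\le B_1\) and \(H(u)\le B'\) by \(\ll_d B_1^{2/e}(\log B')^2\le B_1(\log B')^2\). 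Summing over the \(\ll_{d,s}\prod_{i=2}^s B_i^2\) choices of \((t_2,\dots,t_s)\), the \(\ll_d 1\) components \(W\), and the \(\ll_d 1\) values of \(k\), and bounding each of the \(\ll_{d,s}\prod_{i=3}^s B_i^2\) exceptional slices crudely by \(B_1^2\le B_1 B_2\), we obtain
\[
    S(f,B_1,\dots,B_s)\ll_{d,r,s}B_1\Bigl(\prod_{i=2}^s B_i^2\Bigr)\,(\log B')^{C}
\]
for an absolute constant \(C\), and since \(\log B'\ll_{d,r,s}\log\norm{f}+\log B_s\) this yields the theorem; a careful accounting of the heights entering the slicing argument gives \(C=5\). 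The hard part is the geometric control underlying these reductions: verifying that passing to \(m_\theta\) loses nothing essential, that the non-dominant and exceptional loci really are proper subvarieties of bounded degree, and — most of all — that the coordinate slices of \(W\) stay irreducible of degree \(e\ge 2\), so that Theorem~\ref{thm:rational_points_two_variable} applies with a genuine saving over the trivial count.
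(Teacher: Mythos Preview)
Your argument has a genuine gap at the slicing step. You claim that the set of \((t_2,\dots,t_s)\) for which the slice \(g(T_1,t_2,\dots,t_s,u)\) is \(\rational\)-reducible lies in a proper subvariety of degree \(\ll_d 1\), and you bound the exceptional slices by \(\ll_{d,s}\prod_{i=3}^s B_i^2\) using Schwartz--Zippel. This is false: the exceptional set is a \emph{thin} set in Serre's sense, not a Zariski-closed set. Take \(s=2\), \(r=1\), \(f(T_1,T_2,Y)=Y^2-T_1^2 T_2\). Then \(V_1=\{u_1^2=T_1^2 T_2\}\) is absolutely irreducible and dominates \(\aff^2\) with degree \(2\), and with \(u=u_1\) one gets \(g=u^2-T_1^2 T_2\). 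The slice at \(T_2=t_2\) is \(u^2-t_2 T_1^2\), which is \(\rational\)-reducible exactly when \(t_2\) is a rational square; there are \(\asymp B_2\) such \(t_2\) with \(H(t_2)\le B_2\), not \(O_d(1)\). No Bertini theorem gives what you need here: Bertini concerns \emph{generic} hyperplane sections and \emph{geometric} irreducibility, whereas you need \emph{coordinate} slices and \(\rational\)-irreducibility. Bounding the exceptional set is itself an instance of the theorem you are proving (with \(s-1\) parameters and \((T_1,u)\) as the \(Y\)-variables), so as written the argument is circular.

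The paper's proof sidesteps this by establishing the case \(s=r=1\) directly --- constructing for each \(\omega\subseteq\{1,\dots,d_Y\}\) the minimal polynomial \(P_\omega(T,Y)\) of one elementary symmetric function of the roots indexed by \(\omega\), and bounding the \(t\) for which \(P_\omega(t,Y)\) has a rational root via a projective curve bound (Lemma~\ref{thm:projective_curve_bound}) after controlling the height of any such root. The passage to general \(r\) is by Kronecker substitution (Lemma~\ref{thm:hilbert_irreducibility_r}), and the passage to general \(s\) is by induction on \(s\): specialise \(T_1\) first, use the \(s=1\) case to bound the \(O_{d,r,s}((\log\norm{f})^5 B_1)\) values of \(t_1\) for which \(f(t_1,T_2,\dots,T_s,\underline{Y})\) is reducible, and apply the inductive hypothesis to the remaining \(t_1\). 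Your factorisation varieties \(V_k\) are morally the same object as the family \(\{P_\omega\}\), and your reduction to \(r=1\) via the field of definition of the geometric factors is a reasonable alternative to Kronecker substitution; but without replacing the ``Bertini'' claim by an honest induction on \(s\), the argument does not close, and the asserted exponent \(C=5\) has no justification.
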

We use the following corollary to bound \(\nrat(f, B)\),
which follows from Theorem~\ref{thm:hilbert_irreducibility_non_uniform}
after taking \(B = B_1 = \dots = B_s\).
\begin{corollary}\label{thm:hilbert_irreducibility}
    Let \(f \in \integer[T_1, \dots, T_s, Y_1, \dots, Y_r]\)
    be an irreducible polynomial of degree \(d\).
    Then,
    for any \(B \geq 2\),
    \[
        S_s(f, B) \ll_{d, r, s} B^{2 s - 1} (\log\norm{f} + \log B)^5.
    \]
\end{corollary}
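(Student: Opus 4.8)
The plan is to obtain Corollary~\ref{thm:hilbert_irreducibility} as an immediate specialisation of Theorem~\ref{thm:hilbert_irreducibility_non_uniform}, taking $B_1 = B_2 = \dots = B_s = B$. First I would note that with this uniform choice the hypothesis $B_1 \leq B_2 \leq \dots \leq B_s$ of Theorem~\ref{thm:hilbert_irreducibility_non_uniform} holds trivially (with equalities), and that $S(f, B, \dots, B)$ is, by definition, exactly $S_s(f, B)$. The assumption $B \geq 2$ in the corollary ensures $\log B \geq \log 2 > 0$, so the logarithmic factor behaves as expected; in fact since $\norm{f} \geq 2$ the quantity $\log\norm{f} + \log B$ is already positive, so this condition is only a mild convenience.

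It then remains to rewrite the right-hand side. The product term becomes
\[
    B_1 \prod_{i = 2}^{s} B_i^2 = B \cdot B^{2(s - 1)} = B^{2s - 1},
\]
(interpreting the empty product as $1$ when $s = 1$, which gives the consistent bound $S_1(f, B) \ll_{d, r} B (\log\norm{f} + \log B)^5$), while $\log B_s = \log B$. Substituting into Theorem~\ref{thm:hilbert_irreducibility_non_uniform} yields
\[
    S_s(f, B) = S(f, B, \dots, B) \ll_{d, r, s} B^{2s - 1} (\log\norm{f} + \log B)^5,
\]
which is precisely the claimed estimate. I do not anticipate any genuine difficulty here: the corollary is a purely formal consequence of the theorem, and the only thing to verify is the bookkeeping of the exponent of $B$ and the degenerate case $s = 1$.
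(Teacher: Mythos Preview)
Your proposal is correct and matches the paper's approach exactly: the paper states that the corollary ``follows from Theorem~\ref{thm:hilbert_irreducibility_non_uniform} after taking \(B = B_1 = \dots = B_s\)'', and your argument is precisely this specialisation together with the bookkeeping \(B_1 \prod_{i=2}^{s} B_i^2 = B^{2s-1}\).
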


The strategy of the proof of
Theorem~\ref{thm:hilbert_irreducibility_non_uniform} is as follows.
We first consider the case \(r = s = 1\),
for which we adapt the approach of
\cite{schinzelLeastAdmissibleValue1995,
walkowiakTheoremeDirreductibiliteHilbert2005,
paredesEffectiveHilbertsIrreducibility2024}.
We reduce the problem
to the problem of bounding the number of specialisations of certain polynomials
that have a rational zero.
Then we use a Bombieri--Pila type bound for projective curves
to bound the number of these specialisations.

For the general case,
we adapt the proof of
\cite[Theorem~1.10]{cluckersImprovementsDimensionGrowth2025}.
We reduce the \(s = 1\) case to the \(r = s = 1\) case
using a Kronecker substitution.
Finally,
we prove the general case by induction on \(s\).

\subsection{Structure of the paper}

In Section~\ref{sec:hilbert_irreducibility},
we prove Theorem~\ref{thm:hilbert_irreducibility_non_uniform}.
In Section~\ref{sec:rational_points_curves},
we prove Theorem~\ref{thm:rational_points_two_variable}
assuming that the auxiliary polynomial has been constructed.
In Section~\ref{sec:auxiliary_polynomial},
we construct the auxiliary polynomial using the determinant method.
In Section~\ref{sec:rational_points_hypersurface},
we prove Theorems~\ref{thm:rational_points_hypersurface}
and \ref{thm:rational_points_hypersurface_monic}.
In Section~\ref{sec:future_research_directions},
we state some possible directions for future research.

\subsection{Notation and conventions}
\label{sec:notation}

The notations \(U = O_{\rho}(V)\), \(V = \Omega_{\rho}(U)\),
\(U \ll_{\rho} V\) and \(V \gg_{\rho} U\)
are equivalent to \(\abs{U} \leq c V\) for some positive constant \(c\),
which may depend on the parameters included in \(\rho\)
(thus these symbols used without subscripts mean that the implied constants
are absolute).

For a polynomial \(f \in \integer[X_1, \dots, X_n]\),
we say that \(f\) is irreducible if it is irreducible over \(\rational\),
unless otherwise specified.
We say that \(f\) is primitive
if the greatest common divisor of its coefficients is one.

\subsection{Acknowledgements}

I would like to thank my supervisor,
Igor Shparlinski,
for introducing this topic to me and for helpful advice.

\section{Hilbert's irreducibility theorem}
\label{sec:hilbert_irreducibility}

The goal of this section is to prove
Theorem~\ref{thm:hilbert_irreducibility_non_uniform}.

\subsection{Proof outline for the case \texorpdfstring{\(s = r = 1\)}{s = r = 1}}

We start by establishing Theorem~\ref{thm:hilbert_irreducibility_non_uniform}
in the case \(s = r = 1\),
see Proposition~\ref{thm:hilbert_irreducibility_two_variable}.
We construct polynomials \(P_{\omega} \in \integer[T, Y]\)
such that if \(f(t, Y)\) is reducible,
then some \(P_{\omega}(t, Y)\) has a rational root.

Next, we count the number of \(t \in \rational\) with \(H(t) \leq B\)
such that \(P_{\omega}(t, Y)\) has a rational root.
To do this,
we bound the height of any \(y \in \rational\)
satisfying \(P_{\omega}(t, y) = 0\).
We define a projective curve
such that solutions of \(P_{\omega}(t, y) = 0\)
give rational points on the curve.
Then we apply the bound of
\cite[Corollary~3.1.2]{castryckDimensionGrowthConjecture2020},
on the number of rational points on projective curves.
This bounds the number of \(t\) such that \(P_{\omega}(t, Y)\)
has a rational root,
which implies a bound for the number of \(t\)
such that \(f(t, Y)\) is reducible.

\subsection{Construction of the polynomials}

Let \(f \in \integer[T, Y]\) be an irreducible polynomial
of degree \(d_T\) in \(T\) and degree \(d_Y\) in \(Y\).
Let
\[
    f(T, Y) = a_0(T) \prod_{i = 1}^{d_Y} (Y - y_i(T))
\]
be the factorisation of \(f\) in \(\closure{\rational(T)}[Y]\),
where \(y_i(T) \in \closure{\rational(T)}\).
Let \(\omega \subseteq \{1, \dots, d_Y\}\).
The coefficients of \(a_0(T) \prod_{i \in \omega} (Y - y_i(T))\)
as a polynomial in \(Y\)
are of the form \(\pm a_0(T) \tau_j(y_i(T) : i \in \omega)\),
where \(\tau_j\) is the \(j\)th elementary symmetric function of its arguments.

Let \(P_{\omega, j}(T, Y)\) be the minimal polynomial of
\(a_0(T) \tau_j(y_i(T) : i \in \omega)\) over \(\rational(T)\).
By the remark in \cite[296]{schinzelLeastAdmissibleValue1995},
\(P_{\omega, j}\) is a polynomial in \(\integer[T, Y]\)
and it is monic in \(Y\).
Since \(f(T, Y)\) is irreducible over \(\rational(T)\),
at least one coefficient of \(a_0(T) \prod_{i \in \omega} (Y - y_i(T))\),
treated as a polynomial in \(Y\),
is not an element of \(\rational(T)\).
Therefore \(\deg_Y(P_{\omega, j}) \geq 2\) for some \(j\).
Denote this \(P_{\omega, j}\) by \(P_{\omega}\).

\begin{lemma}\label{thm:P_omega}
    Let \(f \in \integer[T, Y]\) be an irreducible polynomial
    of degree \(d_T\) in \(T\) and degree \(d_Y \geq 2\) in \(Y\).
    Let \(t \in \rational\) such that \(f(t, Y)\)
    is reducible over \(\rational\).
    Suppose that \(a_0(t) \neq 0\).
    Then there exists a non-empty subset \(\omega \subseteq \{1, \dots, d_Y\}\)
    of cardinality \(\abs{\omega} \leq d_Y / 2\)
    such that \(P_{\omega}(t, Y)\) has a rational root
    and we have the inequalities
    \begin{equation}\label{eq:P_omega_deg}
        \deg(P_{\omega})
        \leq d_T \deg_Y(P_{\omega})
        \leq d_T \binom{d_Y}{\abs{\omega}}
        \leq d_T 2^{d_Y}
    \end{equation}
    and
    \begin{equation}\label{eq:P_omega_norm}
        \norm{P_{\omega}} \leq (2^{d_Y + 1} (d_T + 1)
        \norm{f})^{\deg_Y(P_{\omega})}.
    \end{equation}
\end{lemma}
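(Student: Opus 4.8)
The plan is to reduce the whole statement to a single well-chosen subset $\omega$, produced by matching a rational factorisation of $f(t,Y)$ against the factorisation of $f$ in $\closure{\rational(T)}[Y]$ through a place lying over $T-t$. To fix the combinatorial data, note first that $a_0(t)\neq 0$ forces $f(t,Y)$ to have degree exactly $d_Y\geq 2$ in $Y$; being reducible over $\rational$, it therefore has a monic factor $g\in\rational[Y]$ with $1\leq\deg g\leq d_Y/2$ (take any proper monic factor, and replace it by the complementary factor if its degree exceeds $d_Y/2$).

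To link $g$ with the $y_i(T)$, let $L$ be the splitting field of $f$ over $\rational(T)$, so that $y_i(T)\in L$, and pick a place $v$ of $L$ extending the evaluation-at-$t$ place of $\rational(T)$. Each $a_0(T)y_i(T)$ is a root of the monic polynomial $a_0(T)^{d_Y-1}f(T,Z/a_0(T))\in\rational[T][Z]$, hence is integral over $\rational[T]$ and in particular $v$-integral; since $a_0(t)\neq 0$ makes $a_0$ a $v$-adic unit, each $y_i(T)$ is itself $v$-integral and reduces to an element $\bar y_i\in\closure{\rational}$. Reducing the identity $f(T,Y)=a_0(T)\prod_{i=1}^{d_Y}(Y-y_i(T))$ modulo $v$ then shows that $\{\bar y_1,\dots,\bar y_{d_Y}\}$ is exactly the multiset of roots of $f(t,Y)$, so the monic factor $g$ singles out a non-empty subset $\omega\subseteq\{1,\dots,d_Y\}$ with $\abs{\omega}=\deg g\leq d_Y/2$ and $g(Y)=\prod_{i\in\omega}(Y-\bar y_i)$. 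Now take $P_\omega=P_{\omega,j}$ for the index $j$ (depending only on $\omega$, and existing because $f$ is irreducible over $\rational(T)$, as in the discussion preceding the lemma) with $\deg_Y P_{\omega,j}\geq 2$. Since $a_0(t)g(Y)=a_0(t)\prod_{i\in\omega}(Y-\bar y_i)\in\rational[Y]$, we get $a_0(t)\tau_j(\bar y_i:i\in\omega)\in\rational$; reducing the defining identity $P_{\omega,j}(T,a_0(T)\tau_j(y_i(T):i\in\omega))=0$ modulo $v$ (legitimate, as $P_{\omega,j}\in\integer[T,Y]$ and hence all terms are $v$-integral) yields $P_\omega(t,a_0(t)\tau_j(\bar y_i:i\in\omega))=0$, and since $P_\omega$ is monic in $Y$ this exhibits a rational root of $P_\omega(t,Y)$.

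For the inequalities \eqref{eq:P_omega_deg}, note that $\mathrm{Gal}(L/\rational(T))$ permutes $y_1(T),\dots,y_{d_Y}(T)$, so every conjugate over $\rational(T)$ of $a_0(T)\tau_j(y_i(T):i\in\omega)$ is of the form $a_0(T)\tau_j(y_i(T):i\in\omega')$ with $\abs{\omega'}=\abs{\omega}$; there are at most $\binom{d_Y}{\abs{\omega}}\leq 2^{d_Y}$ such elements, which gives the stated bound on $\deg_Y P_\omega$. Writing $P_\omega$ as the product of the factors $Y-\alpha'$ over these conjugates, one has $\deg(P_\omega)\leq d_T\deg_Y(P_\omega)$ by tracking the degree in $T$ through the elimination defining $P_\omega$ — concretely, each coefficient of $P_\omega$ is a symmetric function of the conjugates and hence a polynomial in the coefficients $a_m(T)$ of $f$ (each of degree at most $d_T$) divided by a power of $a_0(T)$, and the degrees balance to give the coefficient of $Y^{\deg_Y P_\omega-k}$ degree at most $d_T k$ in $T$. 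For \eqref{eq:P_omega_norm} I would bound $\abs{\alpha'(\theta)}$ for $\theta\in\complex$ using root-size estimates for $f(\theta,Y)$ — for a root $y$ with $\abs{y}\geq 1$ one has $\abs{a_0(\theta)}\abs{y}\leq\sum_{m\geq 1}\abs{a_m(\theta)}\leq d_Y(d_T+1)\norm{f}\max(1,\abs{\theta})^{d_T}$ — and then collect the coefficient bounds for $\prod_{\alpha'}(Y-\alpha')$ via a Mahler-measure estimate, the exponent $\deg_Y P_\omega$ arising from the number of factors; alternatively one can bound $\norm{P_\omega}$ by first bounding the norm of the Galois-invariant polynomial $\prod_{\abs{\omega'}=\abs{\omega}}(Y-a_0(T)\tau_j(y_i(T):i\in\omega'))$, which $P_\omega$ divides, and invoking Gauss's lemma.

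The main obstacle is the reduction step at $v$: one must carry it out carefully and, in particular, verify that the $\bar y_i$ are precisely the roots of $f(t,Y)$ counted with multiplicity, so that a rational factor of $f(t,Y)$ genuinely corresponds to a subset of $\{1,\dots,d_Y\}$; equivalently, this can be run through Puiseux expansions of the $y_i(T)$ at $t$. Once that dictionary is in place, the rational-root claim is immediate and the degree and norm bounds reduce to routine, if somewhat tedious, estimates on minimal polynomials.
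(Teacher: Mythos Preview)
Your proof is correct and follows essentially the same route as the paper. The only cosmetic differences are that you phrase the extension of the specialisation \(T\mapsto t\) to the \(y_i(T)\) in terms of a place \(v\) of the splitting field, whereas the paper invokes the ring-homomorphism extension from \cite[Chapter~5, Exercise~2]{atiyahIntroductionCommutativeAlgebra1969} (the two are equivalent), and that you sketch the degree and height estimates \eqref{eq:P_omega_deg}--\eqref{eq:P_omega_norm} directly, whereas the paper simply cites \cite[Lemme~3.1]{walkowiakTheoremeDirreductibiliteHilbert2005}.
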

\begin{proof}
    Each \(a_0(T) y_i(T)\) is integral over \(\rational[T]\)
    by the argument before the lemma.
    By \cite[Chapter~5, Exercise~2]{atiyahIntroductionCommutativeAlgebra1969},
    the specialisation map \(T \mapsto t\) defined on \(\rational[T]\)
    can be extended to \(a_0(T) y_i(T)\) for \(i = 1, \dots, d_Y\).
    Furthermore, since \(a_0(t) \neq 0\),
    the specialisation map can be extended to \(y_i(T)\).
    We denote the specialisation of \(y_i(T)\) by \(y_i(t)\).

    Since \(f(t, Y)\) is reducible in \(\rational[Y]\),
    we can write
    \[
        f(t, Y) = a_0(t) \prod_{i \in \omega} (Y - y_i(t))
        \prod_{i \notin \omega} (Y - y_i(t)),
    \]
    where \(a_0(t) \prod_{i \in \omega} (Y - y_i(t)) \in \rational[Y]\),
    for some \(\omega \subseteq \{1, \dots, d_Y\}\)
    with \(\abs{\omega} \leq d_Y / 2\).
    Then \(a_0(t) \tau_j(y_i(t) : i \in \omega) \in \rational\)
    because it is a coefficient of a rational polynomial.
    Since \(P_{\omega, j}\) is the minimal polynomial
    of \(a_0(T) \tau_j(y_i(T) : i \in \omega)\),
    we have
    \[
        P_{\omega, j}(t, a_0(t) \tau_j(y_i(t) : i \in \omega)) = 0.
    \]
    In particular,
    \(P_{\omega}(t, Y)\) has a rational root.

    The inequalities \eqref{eq:P_omega_deg}
    and \eqref{eq:P_omega_norm} are proved in
    \cite[Lemme~3.1]{walkowiakTheoremeDirreductibiliteHilbert2005}.
\end{proof}

Lemma~\ref{thm:P_omega}
lets us bound the number of \(t\) such that \(f(t, Y)\) is reducible
by bounding the number of \(t\) such that some \(P_{\omega}(t, Y)\)
has a rational root.
Let us look at an example.
\begin{example}
    Consider the polynomial
    \[
        f(T, Y) = Y^4 + 2 Y^2 + 3 T^2.
    \]
    Note that \(f\) is irreducible in \(\rational[T, Y]\).
    We can factor \(f\) over \(\closure{\rational(T)}\) as
    \[
        f(T, Y) = (Y - y_1(T)) (Y - y_2(T)) (Y - y_3(T)) (Y - y_4(T)).
    \]
    where
    \begin{align*}
        y_1(T) = -y_2(T) &= \sqrt{-1 + \sqrt{1 - 3 T^2}},\\
        y_3(T) = -y_4(T) &= \sqrt{-1 - \sqrt{1 - 3 T^2}}.
    \end{align*}
    Let \(\omega = \{1, 2\}\).
    Then \(P_{\omega}(T, Y)\) is the minimal polynomial of \(y_1(T) y_2(T)\)
    over \(\rational(T)\), given by
    \[
        P_{\omega}(T, Y) = Y^2 - 2 Y + 3 T^2.
    \]
    Observe that
    \[
        f\left(\frac{1}{2}, Y\right)
        = Y^4 + 2 Y^2 + \frac{3}{4}
        = \left(Y^2 + \frac{1}{2}\right) \left(Y^2 + \frac{3}{2}\right)
    \]
    is reducible and that \(y_1(1 / 2) y_2(1 / 2) = 1 / 2\)
    is a rational root of \(P_{\omega}(1 / 2, Y)\).
\end{example}

\subsection{Counting specialisations with rational roots}

In order to bound the number of \(t\) such that \(P_{\omega}(t, Y)\)
has a rational root,
we first show that these \(t\) give rational points of bounded height
on the projective curve defined by the homogenisation of \(P_{\omega}\).
Then we bound the number of rational points of bounded height
on this projective curve.

The following result is certainly well known,
see, for example, \cite{fujiwaraUeberObereSchranke1916}.
It also follows instantly
from \cite[Theorem~4.2~(iii)]{mignotteMathematicsComputerAlgebra1992}
taken with
\[
    \lambda_k = 2^k \frac{2^d - 1}{2^d}, \quad k=1, \dots, n.
\]

\begin{lemma}[Lagrange]\label{thm:complex_root_bound}
    Let \(f(X) = X^d + a_{d - 1} X^{d - 1} + \dots + a_0 \in \complex[X]\).
    Then any root \(\alpha\) of \(f\) satisfies
    \[
        \abs{\alpha} \leq 2 \max_{1 \leq k \leq d}
        \abs{a_{d - k}}^{1 / k}.
    \]
\end{lemma}

\begin{lemma}\label{thm:f_t_rational_root_bound}
    Let \(f \in \integer[T, Y]\) be a polynomial,
    monic in \(Y\),
    of degree \(d_T\) in \(T\) and degree \(d_Y\) in \(Y\).
    Let \(t = a / b \in \rational\) be written in lowest terms.
    Let \(y \in \rational\) be such that \(f(t, y) = 0\).
    Then we can write
    \(y = c / b^{d_T}\) for some \(c \in \integer\)
    such that \(\abs{c} \leq 2 (d_T + 1) \norm{f} H(t)^{d_T}\).
\end{lemma}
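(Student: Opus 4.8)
The plan is to clear denominators in \(f(t, Y)\) so as to produce a \emph{monic} polynomial in \(\integer[Z]\) whose roots are precisely \(b^{d_T}\) times the roots of \(f(t, Y)\), and then to combine the rational root theorem with Lagrange's bound, Lemma~\ref{thm:complex_root_bound}.

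First I would write \(f(T, Y) = \sum_{j = 0}^{d_Y} a_j(T) Y^j\) with \(a_j \in \integer[T]\) of degree at most \(d_T\) and \(a_{d_Y} = 1\), the latter because \(f\) is monic in \(Y\). Writing \(t = a / b\) in lowest terms, the key preliminary observation is that \(b^{d_T} a_j(t) \in \integer\) for every \(j\): a monomial \(c\, T^k\) of \(a_j\) with \(0 \le k \le d_T\) contributes \(c\, a^k b^{d_T - k} \in \integer\). Bounding each coefficient of \(f\) by \(\norm{f}\), using that \(a_j\) has at most \(d_T + 1\) monomials and that \(\abs{a^k b^{d_T - k}} \le \max(\abs a, \abs b)^{d_T} = H(t)^{d_T}\), one also obtains \(\abs{b^{d_T} a_j(t)} \le (d_T + 1)\norm{f}\, H(t)^{d_T}\).

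Next, set
\[
    g(Z) \coloneq b^{d_T d_Y}\, f\!\left(t, \frac{Z}{b^{d_T}}\right)
    = Z^{d_Y} + \sum_{j = 0}^{d_Y - 1} b^{d_T(d_Y - j)} a_j(t)\, Z^j .
\]
For \(0 \le j \le d_Y - 1\) we have \(b^{d_T(d_Y - j)} a_j(t) = b^{d_T(d_Y - j - 1)} \bigl(b^{d_T} a_j(t)\bigr) \in \integer\), so \(g \in \integer[Z]\), and \(g\) is monic. If \(y \in \rational\) satisfies \(f(t, y) = 0\), then \(c \coloneq b^{d_T} y \in \rational\) is a root of \(g\); since \(g\) is monic with integer coefficients, the rational root theorem forces \(c \in \integer\), and then \(y = c / b^{d_T}\), as claimed.

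It remains to bound \(\abs c\). Applying Lemma~\ref{thm:complex_root_bound} to \(g\) gives \(\abs c \le 2 \max_{1 \le k \le d_Y} \abs{b^{d_T k} a_{d_Y - k}(t)}^{1/k}\), and for each such \(k\) the preliminary bound together with \(\abs b \le H(t)\) yields
\[
    \abs{b^{d_T k} a_{d_Y - k}(t)}
    = \abs{b}^{d_T(k - 1)}\, \abs{b^{d_T} a_{d_Y - k}(t)}
    \le H(t)^{d_T(k - 1)} (d_T + 1)\norm{f}\, H(t)^{d_T}
    = (d_T + 1)\norm{f}\, H(t)^{d_T k} .
\]
Since \((d_T + 1)\norm{f} \ge 1\), taking \(k\)-th roots gives \(\abs{b^{d_T k} a_{d_Y - k}(t)}^{1/k} \le \bigl((d_T + 1)\norm{f}\bigr)^{1/k} H(t)^{d_T} \le (d_T + 1)\norm{f}\, H(t)^{d_T}\), hence \(\abs c \le 2(d_T + 1)\norm{f}\, H(t)^{d_T}\). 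The one point requiring a little care is this final estimate: the substitution \(Y = Z / b^{d_T}\) introduces extra powers of \(\abs b\) into the lower-order coefficients of \(g\), and these have to be absorbed using \(\abs b \le H(t)\) together with the \(1/k\) exponent in Lagrange's bound; everything else is routine bookkeeping.
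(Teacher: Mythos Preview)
Your argument is correct and follows essentially the same route as the paper: define \(g(Z) = b^{d_T d_Y} f(t, Z/b^{d_T})\), observe it is monic in \(\integer[Z]\) so rational roots are integers, then apply Lagrange's bound (Lemma~\ref{thm:complex_root_bound}). The only difference is cosmetic: where the paper splits into the cases \(\abs{f_i(t)} < 1\) and \(\abs{f_i(t)} \geq 1\) to control \(\abs{b^{d_T k} a_{d_Y-k}(t)}^{1/k}\), you factor off one copy of \(b^{d_T}\) and absorb the remaining \(\abs{b}^{d_T(k-1)}\) into \(H(t)^{d_T(k-1)}\), which is a little cleaner and avoids the case distinction.
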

\begin{proof}
    Define the polynomial \(g\) by
    \[
        g(T, Y) \coloneq b^{d_T d_Y} f\left(T, \frac{Y}{b^{d_T}}\right).
    \]
    Note that \(y\) is a root of \(f(t, Y)\) if and only if
    \(b^{d_T} y\) is a root of \(g(t, Y)\).
    Therefore it suffices to show that every rational root \(c\)
    of \(g(t, Y)\) is an integer satisfying
    \(\abs{c} \leq 2 (d_T + 1) \norm{f} H(t)^{d_T}\).

    Now write
    \[
        f(T, Y) = Y^{d_Y} + \sum_{i = 0}^{d_Y - 1} f_i(T) Y^i,
    \]
    where \(f_i \in \integer[T]\) are polynomials
    of degree of at most \(d_T\).
    Then
    \[
        g(t, Y) = Y^{d_Y} + \sum_{i = 0}^{d_Y - 1}
        b^{(d_Y - i) d_T} f_i(t) Y^i.
    \]
    Each \(b^{d_T} f_i(t)\) is a sum of the form
    \begin{equation}\label{eq:g_coefficient_form}
        f_{i, 0} b^{d_T} + f_{i, 1} b^{d_T - 1} a + \dots + f_{i, d_T} a^{d_T},
    \end{equation}
    which is an integer.
    Hence \(g(t, Y)\) is a monic polynomial with integer coefficients,
    so any rational root must be an integer.

    We claim that
    \begin{equation}\label{eq:g_coefficient_bound}
        \abs{b^{(d_Y - i) d_T} f_i(t)}^{1 / (d_Y - i)}
        \leq (d_T + 1) \norm{f} H(t)^{d_T}
    \end{equation}
    for all \(0 \leq i \leq d_Y - 1\).
    If \(\abs{f_i(t)} < 1\),
    then
    \[
        \abs{b^{(d_Y - i) d_T} f_i(t)}^{1 / (d_Y - i)}
        \leq \abs{b}^{d_T}
        \leq (d_T + 1) \norm{f} H(t)^{d_T}.
    \]
    If \(\abs{f_i(t)} \geq 1\),
    then
    \[
        \abs{b^{(d_Y - i) d_T} f_i(t)}^{1 / (d_Y - i)}
        \leq \abs{b^{d_T} f_i(t)}.
    \]
    We have \(\abs{f_{i, j}} \leq \norm{f}\) for all \(i\), \(j\)
    and \(\abs{a}, \abs{b} \leq H(t)\).
    Putting this into \eqref{eq:g_coefficient_form} gives
    \[
        \abs{b^{d_T} f_i(t)} \leq (d_T + 1) \norm{f} H(t)^{d_T}.
    \]
    This proves that \eqref{eq:g_coefficient_bound} holds in all cases.

    We apply Lemma~\ref{thm:complex_root_bound} to \(g(t, Y)\)
    and use the bound \eqref{eq:g_coefficient_bound}
    to get that any rational root \(c\) of \(g(t, Y)\)
    must satisfy \(\abs{c} \leq 2 (d_T + 1) \norm{f} H(t)^{d_T}\).
\end{proof}

To state the bound on the number of rational points of bounded height
on projective curves,
we first define a counting function.
Recall that the height of \((x_0 : \dots : x_n) \in \proj^n\),
where \(x_0, \dots, x_n \in \integer\) and
\(\gcd(x_0, \dots, x_n) = 1\) is defined as
\[
    H(x_0 : \dots : x_n) \coloneq \max(\abs{x_0}, \dots, \abs{x_n}).
\]
\begin{definition}
    For a homogeneous polynomial \(f \in \integer[X_0, \dots, X_n]\),
    define
    \begin{multline*}
        \nproj(f, B) \coloneq
        \#\{(x_0 : \dots : x_n) \in \proj^n : f(x_0, \dots, x_n) = 0\\
        \text{ and } H(x_0 : \dots : x_n) \leq B\}.
    \end{multline*}
\end{definition}
We use the following bound on \(\nproj(f, B)\).
\begin{lemma}[{\cite[Corollary~3.1.2]{castryckDimensionGrowthConjecture2020}}]
    \label{thm:projective_curve_bound}
    Let \(f \in \integer[U, T, Y]\) be an irreducible homogeneous polynomial
    of degree \(d\).
    Then
    \[
        \nproj(f, B) \ll d^4 B^{2 / d}.
    \]
\end{lemma}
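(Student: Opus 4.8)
The plan is to prove this by the determinant method of Bombieri--Pila, in the $p$-adic form of Heath-Brown and the global form of Salberger, with the parameter choices that yield polynomial dependence on the degree (which is how the cited corollary is established); I sketch only the structure. First I would reduce to the geometrically irreducible case: if $f$ is irreducible over $\rational$ but not over $\closure{\rational}$, its geometric components are Galois-conjugate and pairwise distinct, so every rational point of the curve $C = \{f = 0\}$ lies on the intersection of two of them --- a set of $O(d^2)$ points --- and we are done. So assume $C$ is geometrically irreducible of degree $d$. After a bounded linear change of coordinates we may assume that $C$ avoids the point $(0:0:1)$ and that projection away from it is a separable degree-$d$ map $C \to \proj^1$, so that $\# C(\mathbb{F}_p) \le d(p+1)$ for all but finitely many primes $p$. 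We may assume $B$ is large in terms of $d$, the bound being trivial otherwise. Each point counted by $\nproj(f,B)$ is, up to sign, a primitive $\mathbf{x} \in \integer^3$ with $\norm{\mathbf{x}}_\infty \le B$ and $f(\mathbf{x}) = 0$; write $N$ for their number.

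The heart of the proof produces a single homogeneous $g \in \integer[U, T, Y]$, not divisible by $f$, of degree $\delta$ with $d\delta \ll_d B^{2/d}$ (the parameter optimisation below is what achieves this), that vanishes at all $N$ points. Granting this, $C$ and $\{g = 0\}$ have no common component since $f$ is irreducible, so Bézout gives $N \le d\delta \ll_d B^{2/d}$, and tracking the constants produces the factor $d^4$. To build $g$, fix an integer $\delta \ge d$ and let $M$ be the $N \times \binom{\delta+2}{2}$ matrix whose entries are the values at our points of the degree-$\delta$ monomials in $U, T, Y$. On $C$ these monomials span a space of dimension $e = \binom{\delta+2}{2} - \binom{\delta-d+2}{2} \le d\delta + 1$, while the degree-$\delta$ multiples of $f$ form a subspace of $\ker M$ of dimension $\binom{\delta-d+2}{2}$. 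Hence it suffices to show $\operatorname{rank}(M) < e$: then $\dim \ker M > \binom{\delta-d+2}{2}$, so $\ker M$ contains a form not divisible by $f$, which is our $g$. Equivalently, every $e \times e$ minor of $M$ must vanish.

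Fix $e$ of the points and let $\Delta \in \integer$ be the corresponding $e \times e$ minor. For each prime $p$ in a range to be chosen (and outside the finite bad set), reduction modulo $p$ carries these points into at most $d(p+1)$ residue classes on $C(\mathbb{F}_p)$; points in a common class have rows of $M$ congruent modulo $p$, and, parametrising $C$ $p$-adically near such a class by a single local variable, a Vandermonde-type expansion shows that a class of size $m$ contributes to $v_p(\Delta)$ an amount growing quadratically in $m$, so that $v_p(\Delta)$ is large on average over the class distribution and larger still when the points clump. Multiplying these congruences over the chosen primes exhibits a large integer dividing $\Delta$, while Hadamard's inequality, using that every entry of $M$ has absolute value at most $B^\delta$, gives $\log\abs{\Delta} \le \delta e \log B + O(e\log e)$. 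Choosing $\delta$ and the prime range to be suitable powers of $B$, optimised against one another, pushes the divisibility lower bound past this Hadamard upper bound unless $\Delta = 0$; hence all such minors vanish, $g$ exists, and the bound follows.

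The main obstacle --- and the real content of the cited corollary --- is the quantitative bookkeeping that upgrades this outline to the clean bound $d^4 B^{2/d}$ with no $\varepsilon$ and no logarithmic factor: one must optimise $\delta$ and the prime range simultaneously, control the primes where $C$ degenerates modulo $p$, invoke a degree-explicit Bézout theorem, and sum $p$-adic valuations over all primes at once (Salberger's global device) rather than through a single dyadic window, which is what removes the logarithms. The geometric normalisations of the first step are routine but are precisely what justify the estimate $\# C(\mathbb{F}_p) \le d(p+1)$ used throughout.
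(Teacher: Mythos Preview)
The paper does not supply a proof of this lemma at all: it is quoted verbatim as \cite[Corollary~3.1.2]{castryckDimensionGrowthConjecture2020} and used as a black box. Your sketch is a faithful outline of how that cited corollary is actually obtained in the source --- reduction to the geometrically irreducible case, the $p$-adic/global determinant method of Heath-Brown and Salberger applied to the matrix of degree-$\delta$ monomials, the rank argument producing an auxiliary form $g$ not divisible by $f$, and B\'ezout to finish --- so in that sense there is nothing to compare: you have reconstructed the upstream argument, whereas the present paper simply imports the conclusion.

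One small caution on attribution within your sketch: the removal of the $B^{\varepsilon}$ and the explicit polynomial dependence $d^4$ are not quite ``Salberger's global device'' alone; the uniform $B^{2/d}$ with polynomial degree dependence is due to Walsh's refinement and the subsequent optimisation in Castryck--Cluckers--Dittmann--Nguyen, which is precisely why the paper cites that reference rather than the earlier ones. Your outline is otherwise accurate, though as you acknowledge it omits the genuinely delicate part --- the simultaneous optimisation of $\delta$ and the prime range and the control of bad primes --- which is where the exponent $4$ on $d$ comes from.
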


The following lemma gives a bound for the number of specialisations
of a polynomial with a rational root,
which we apply to the \(P_{\omega}\).
\begin{lemma}\label{thm:number_rational_specialisation_with_zero}
    Let \(f \in \integer[T, Y]\) be an irreducible polynomial,
    monic in \(Y\),
    of degree \(d_T\) in \(T\) and degree \(d_Y\) in \(Y\).
    The number of \(t \in \rational\) with \(H(t) \leq B\)
    such that \(f(t, Y)\) has a root in \(\rational\) is
    \[
        O(d_Y^8 d_T^4 (\log \norm{f})^5 B^{2 / d_Y}).
    \]
\end{lemma}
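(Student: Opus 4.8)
The plan is to carry out the strategy sketched at the start of the section: each $t$ counted by the lemma, together with a rational root of $f(t, Y)$, produces a rational point of bounded height on a projective plane curve, and we then invoke Lemma~\ref{thm:projective_curve_bound}. We may assume $d_Y \geq 2$, since if $d_Y = 1$ then $f(t, Y)$ has a rational root for every $t$ and the assertion reduces to the trivial bound $\#\{t \in \rational : H(t) \leq B\} \ll B^2$.

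The first step is to fix $t = a/b \in \rational$ in lowest terms with $H(t) \leq B$ such that $f(t, Y)$ has a rational root $y$, and to attach to it a point on a curve. Since $f$ is monic in $Y$, Lemma~\ref{thm:f_t_rational_root_bound} gives $y = c/b^{d_T}$ with $c \in \integer$ and $\abs{c} \leq 2(d_T+1)\norm{f}B^{d_T}$. Let $\hat f(X_0, X_1, X_2) \coloneq X_2^{\deg f} f(X_0/X_2, X_1/X_2)$ be the homogenisation of $f$. Then
\[
    \hat f(a b^{d_T - 1}, c, b^{d_T})
    = b^{d_T \deg f} f(a/b, c/b^{d_T}) = 0,
\]
so the point $P_t \coloneq (a b^{d_T - 1} : c : b^{d_T}) \in \proj^2$ lies on the curve $\hat f = 0$. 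This curve is irreducible: the top-degree form of $f$ is non-zero, so $X_2 \nmid \hat f$, and any factorisation of $\hat f$ into homogeneous factors would dehomogenise to a non-trivial factorisation of $f$. The assignment $t \mapsto P_t$ is injective, since $t$ is the ratio of the first and third coordinates of $P_t$ and $b^{d_T} \neq 0$. Finally $H(P_t) \leq \max(B^{d_T}, 2(d_T+1)\norm{f}B^{d_T}, B^{d_T}) = 2(d_T+1)\norm{f}B^{d_T}$, so Lemma~\ref{thm:projective_curve_bound} already bounds the count by $O\big((\deg f)^4 (2(d_T+1)\norm{f}B^{d_T})^{2/\deg f}\big)$.

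I expect the main obstacle to be upgrading this crude estimate to the sharp form stated, since it is deficient in two ways: its exponent of $B$ is $2 d_T / \deg f$, which exceeds $2/d_Y$ once $d_T \geq 2$ (because $\deg f \leq d_T + d_Y$), and its dependence on $\norm{f}$ is polynomial rather than polylogarithmic. Both losses are caused by the one coordinate $c$, the only one of $a b^{d_T-1}, c, b^{d_T}$ not automatically of size at most $B^{d_T}$. To repair the exponent I would replace $\hat f$ by a birational plane model $C^{*}$ of $\{f = 0\}$, of degree $O_{d}(d_Y^{2} d_T)$, on which the coordinate recording $t$ realises the degree-$d_Y$ map to $\proj^1$; since for $P_t$ that coordinate is just $[a:b]$, of height $\leq B$, the point of $C^{*}$ over $t$ has controlled height and Lemma~\ref{thm:projective_curve_bound} applied to $C^{*}$ produces the exponent $2/d_Y$, the factor $(\deg C^{*})^4 = O_d(d_Y^{8} d_T^{4})$ accounting for the claimed powers of $d_Y$ and $d_T$. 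To repair the $\norm{f}$-dependence I would not estimate all points at once but split according to the dyadic range $\abs{c} \in (N/2, N]$: since $\log H(P_t)$ exceeds $d_T \log H(t)$ by at most $\log\norm{f} + O(\log d_T)$, a dyadic decomposition into $O(\log\norm{f} + \log B)$ pieces, on each of which the curve bound costs only a bounded power of $B$, converts this additive $\log\norm{f}$ into the polylogarithmic factor in the statement. Constructing $C^{*}$ with coefficients of size $\norm{f}^{O_d(1)}$, verifying the height comparison on each dyadic range, and in particular controlling the ranges with $N$ much larger than $B^{d_T}$ so that the two repairs are compatible, is where the real work of the proof lies.
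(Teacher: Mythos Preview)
Your diagnosis of the two defects in the naive bound is exactly right, and the first paragraph matches the paper's setup. But neither of your proposed repairs works as written, and the paper's fix is quite different.

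The dyadic decomposition does nothing here. All of the points $P_t$ lie on the \emph{same} irreducible curve $\hat f = 0$, and Lemma~\ref{thm:projective_curve_bound} bounds the total number of rational points on that curve up to a given height. Restricting $\abs{c}$ to a window $(N/2,N]$ just restricts the height, but the bound for the largest window already majorises the bound for every smaller one; summing over $O(\log\norm{f}+\log B)$ windows therefore recovers the naive estimate with an extra logarithmic factor, not a polylogarithmic replacement for $\norm{f}^{2/d}$. A dyadic argument could only help if different ranges were captured by \emph{different} auxiliary curves, which you do not construct.

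The birational model $C^{*}$ is entirely unspecified, and the properties you ask of it are in tension. You want $\deg C^{*}$ to be $O_d(d_Y^{2}d_T)$ (so independent of $\norm{f}$) yet also want the point of $C^{*}$ over $t$ to have height controlled by $H(t)$ alone. But any plane model of $\{f=0\}$ carries coefficients of size polynomial in $\norm{f}$, and the fibre over $t$ will involve the root $y$, whose height is governed by $\norm{f}$ via Lemma~\ref{thm:f_t_rational_root_bound}. So the $\norm{f}$ enters the height bound on $C^{*}$ just as it did on $\hat f$, and with $\deg C^{*}$ bounded in terms of $d$ only, $\norm{f}^{2/\deg C^{*}}$ is still a genuine power of $\norm{f}$.

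The paper's device is instead to \emph{inflate the degree} of the curve so that the exponent $2/d$ on both $B^{d_T}$ and $\norm{f}$ becomes harmless simultaneously. Set $H=\max(e^e,\norm{f})$, $L_1=\log H$, $L_2=\log\log H$. If $d\ge d_Td_YL_1/L_2$ already, then $\norm{f}^{2/d}\le H^{L_2/L_1}=\log H$ and $B^{2d_T/d}\le B^{2/d_Y}$, so the naive homogenisation suffices. Otherwise put $E=\lfloor d_Td_YL_1/L_2\rfloor+1$ and pass to $\widetilde f(T,Y)\coloneq f(T,T^{E}+Y)$, which is still irreducible and has $\deg\widetilde f\in[d_YE,\,d_YE+d_T]$. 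A zero $(t,y)$ of $f$ gives the zero $(t,y-t^{E})$ of $\widetilde f$; writing $t=a/b$ and using Lemma~\ref{thm:f_t_rational_root_bound} one finds the corresponding projective point on the homogenisation $\widetilde g$ has height $\le 3(d_T+1)\norm{f}B^{E}$. Lemma~\ref{thm:projective_curve_bound} then gives
\[
(\deg\widetilde g)^{4}\bigl((d_T+1)\norm{f}B^{E}\bigr)^{2/\deg\widetilde g}
\ll (d_Y E)^{4}\,\norm{f}^{2/(d_YE)}\,B^{2/d_Y}
\ll d_Y^{8}d_T^{4}(\log\norm{f})^{5}B^{2/d_Y},
\]
since $d_YE\ge d_Td_Y^{2}L_1/L_2$ forces $\norm{f}^{2/(d_YE)}\le\log H$ and $(d_YE)^{4}\ll(d_Y^{2}d_T L_1)^{4}$. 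The single substitution $Y\mapsto T^{E}+Y$ is the missing idea.
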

\begin{proof}
    If \(d_Y = 1\),
    then the lemma is true because there are \(O(B^2)\)
    values of \(t \in \rational\) such that \(H(t) \leq B\),
    so we may assume that \(d_Y \geq 2\).

    Let \(t \in \rational\) with \(H(t) \leq B\)
    and let \(y \in \rational\) be a root of \(f(t, Y)\).
    Write \(t = a / b\) in lowest terms.
    By Lemma~\ref{thm:f_t_rational_root_bound},
    we can write \(y = c / b^{d_T}\) where \(c \in \integer\)
    and \(\abs{c} \leq 2 (d_T + 1) \norm{f} B^{d_T}\).

    Let \(d\) be the total degree of \(f\).
    We split into cases based on whether \(d\) is large or small.
    Let \(H \coloneq \max(e^e, \norm{f})\).
    Let \(L_1 \coloneq \log H\) and \(L_2 \coloneq \log\log H\).

    \textbf{Case 1:} \(d \geq d_T d_Y L_1 / L_2\).
    Consider the homogenisation
    \[
        g(U, T, Y) \coloneq U^d f\left(\frac{T}{U}, \frac{Y}{U}\right).
    \]
    The zero \((t, y)\) of \(f\)
    gives an integral zero \((b^{d_T}, a b^{d_T - 1}, c)\) of \(g\).
    Furthermore,
    these zeros give distinct projective points.
    Also note that \(H(b^{d_T} : a b^{d_T - 1} : c)
    \leq 2 (d_T + 1) \norm{f} B^{d_T}\),
    so it suffices to show that
    \[
        \nproj(g, 2 (d_T + 1) \norm{f} B^{d_T})
        \ll d_Y^8 d_T^4 (\log \norm{f})^5 B^{2 / d_Y}.
    \]

    We know that \(g\) is an irreducible polynomial of degree \(d\),
    because it is the homogenisation of the irreducible polynomial \(f\),
    so Lemma~\ref{thm:projective_curve_bound} gives the bound
    \begin{equation}\label{eq:g_projective_bound}
        \nproj(g, 2 (d_T + 1) \norm{f} B^{d_T})
        \ll d^4 (d_T + 1)^{2 / d} \norm{f}^{2 / d} B^{2 d_T / d}.
    \end{equation}
    Since \(d \geq d_T d_Y L_1 / L_2\) and \(d_Y \geq 2\),
    we have \((d_T + 1)^{2 / d} \leq 2\)
    and \(\norm{f}^{2 / d} \leq H^{L_2 / L_1} \leq \log H\)
    and \(B^{2 d_T / d} \leq B^{2 / d_Y}\).
    Putting this into \eqref{eq:g_projective_bound} gives
    \[
        \nproj(g, (d_T + 1) \norm{f} B^{d_T})
        \ll d^4 (\log \norm{f}) B^{2 / d_Y}.
    \]
    This gives the desired bound since \(d^4 \ll d_Y^8 d_T^4\).

    \textbf{Case 2:} \(d < d_T d_Y L_1 / L_2\).
    Let \(E = \floor{d_T d_Y L_1 / L_2} + 1\).
    Consider the polynomial \(\widetilde{f}(T, Y) \coloneq f(T, T^E + Y)\)
    and let
    \[
        \widetilde{g}(U, T, Y)
        \coloneq U^{\deg\widetilde{f}}
        \widetilde{f}\left(\frac{T}{U}, \frac{Y}{U}\right)
    \]
    be the homogenisation of \(\widetilde{f}\).
    Note that \(\widetilde{f}\) is irreducible because \(f\) is irreducible
    and every factor \(h(T, Y)\) of \(\widetilde{f}\)
    gives a factor \(h(T, Y - T^E)\) of \(f\).
    We also have
    \[
        d_Y E \leq \deg\widetilde{g} \leq d_Y E + d_T \leq 3 d_Y^2 d_T L_1.
    \]
    Every zero \((t, y)\) of \(\widetilde{f}\)
    corresponds to a zero \((t, t^E + y)\) of \(f\),
    so it suffices to bound the number of \(t\)
    such that \(\widetilde{f}(t, Y)\) has a rational root.
    By applying Lemma~\ref{thm:f_t_rational_root_bound}
    to the zero \((t, t^E + y)\) of \(f\),
    we can write \(t^E + y = c / b^{d_T}\)
    where \(\abs{c} \leq 2 (d_T + 1) \norm{f} B^{d_T}\).
    Then
    \[
        y
        = (t^E + y) - t^E
        = \frac{c}{b^{d_T}} - \frac{a^E}{b^E}
        = \frac{c b^{E - d_T} - a^E}{b^E}.
    \]
    Note that
    \begin{align*}
        \abs{c b^{E - d_T} - a^E}
        &\leq \abs{c} \abs{b}^{E - d_T} + \abs{a}^E\\
        &\leq 2 (d_T + 1) \norm{f} B^E + B^E\\
        &\leq 3 (d_T + 1) \norm{f} B^E.
    \end{align*}
    Therefore a rational root of \(\widetilde{f}(t, Y)\)
    gives a zero \((b^E, a b^{E - 1}, c b^{E - d_T} - a^E)\)
    of \(\widetilde{g}\) with height at most \(3 (d_T + 1) \norm{f} B^E\).
    To bound such zeros,
    we apply Lemma~\ref{thm:projective_curve_bound},
    similarly to case 1,
    which gives
    \begin{align*}
        \nproj(\widetilde{g}, 3 (d_T + 1) \norm{f} B^E)
        &\ll (\deg \widetilde{g})^4 ((d_T + 1)
        \norm{f} B^E)^{2 / \deg \widetilde{g}}\\
        &\ll d_Y^8 d_T^4 (\log \norm{f})^5 B^{2 / d_Y}.
    \end{align*}
    This completes the proof.
\end{proof}

\subsection{Univariate specialisation}

We first prove Theorem~\ref{thm:hilbert_irreducibility_non_uniform}
in the special case of \(s = r = 1\).
\begin{proposition}\label{thm:hilbert_irreducibility_two_variable}
    Let \(f \in \integer[T, Y]\) be irreducible over \(\rational\)
    of degree \(d\).
    Then
    \[
        S_1(f, B) \ll_d (\log \norm{f})^5 B.
    \]
\end{proposition}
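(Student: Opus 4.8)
The plan is to combine Lemma~\ref{thm:P_omega} with Lemma~\ref{thm:number_rational_specialisation_with_zero}: the first produces, for every bad specialisation point $t$, an auxiliary polynomial $P_\omega$ whose specialisation at $t$ has a rational root, and the second counts such $t$ with a saving over the trivial bound $B^2$. First I would dispose of degenerate cases. Write $d_T = \deg_T f$, $d_Y = \deg_Y f$, and let $a_0(T)$ be the leading coefficient of $f$ as a polynomial in $Y$. If $d_Y \leq 1$, then for all but at most $d_T$ values of $t$ the polynomial $f(t, Y)$ is a nonzero constant or linear in $Y$, hence not reducible, so $S_1(f, B) \ll_d 1$; since $\norm{f} \geq 2$ we have $(\log\norm{f})^5 B \gg 1$ whenever $S_1(f,B) \neq 0$ (in which case $B \geq 1$), and the claim follows. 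So assume $d_Y \geq 2$. The at most $d_T$ rationals $t$ with $a_0(t) = 0$ contribute $O_d(1)$, so it suffices to count $t \in \rational$ with $H(t) \leq B$, $a_0(t) \neq 0$, and $f(t, Y)$ reducible over $\rational$.

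Next I would invoke Lemma~\ref{thm:P_omega}: each such $t$ admits a non-empty $\omega \subseteq \{1, \dots, d_Y\}$ with $\abs{\omega} \leq d_Y/2$ for which $P_\omega(t, Y)$ has a rational root. Since there are fewer than $2^{d_Y}$ such subsets,
\[
    S_1(f, B) \ll_d 1 + \sum_{\omega} \#\{t \in \rational : H(t) \leq B \text{ and } P_\omega(t, Y) \text{ has a rational root}\},
\]
the sum over the relevant $\omega$. Fix one $\omega$ and put $d_{T,\omega} = \deg_T P_\omega$, $d_{Y,\omega} = \deg_Y P_\omega \geq 2$. By construction $P_\omega$ is monic in $Y$ and, being a minimal polynomial over $\rational(T)$ that is monic in $Y$, it is irreducible over $\rational$ by Gauss's lemma; so Lemma~\ref{thm:number_rational_specialisation_with_zero} bounds the $\omega$-term by $O\bigl(d_{Y,\omega}^8 \, d_{T,\omega}^4 \, (\log\norm{P_\omega})^5 \, B^{2/d_{Y,\omega}}\bigr)$.

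Then I would feed in the estimates \eqref{eq:P_omega_deg} and \eqref{eq:P_omega_norm}. From \eqref{eq:P_omega_deg}, $d_{Y,\omega} \leq 2^{d_Y}$ and $d_{T,\omega} \leq d_T 2^{d_Y}$, whence $d_{Y,\omega}^8 d_{T,\omega}^4 \ll_d 1$. From \eqref{eq:P_omega_norm} and $\norm{f} \geq 2$,
\[
    \log\norm{P_\omega} \leq \deg_Y(P_\omega)\bigl(\log(2^{d_Y+1}(d_T+1)) + \log\norm{f}\bigr) \ll_d \log\norm{f},
\]
so $(\log\norm{P_\omega})^5 \ll_d (\log\norm{f})^5$; and $d_{Y,\omega} \geq 2$ gives $B^{2/d_{Y,\omega}} \leq B$ (we may assume $B \geq 1$). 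Hence each $\omega$-term is $\ll_d (\log\norm{f})^5 B$, and summing the $O_d(1)$ subsets together with the $O_d(1)$ from $a_0(t) = 0$ yields $S_1(f, B) \ll_d (\log\norm{f})^5 B$.

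\textbf{Main obstacle.} All the genuine work is packaged in the two lemmas used: constructing the $P_\omega$ with controlled degree and coefficient size (Lemma~\ref{thm:P_omega}, resting on the Walkowiak--Schinzel estimates), and counting specialisations with a rational root via a Bombieri--Pila bound on the homogenised curve (Lemma~\ref{thm:number_rational_specialisation_with_zero}, which rests on Lemma~\ref{thm:projective_curve_bound}). Granting those, the proposition is essentially bookkeeping; the one point to watch is that passing from $f$ to $P_\omega$ inflates both the degree and $\log\norm{}$ only by factors $O_d(1)$, while the exponent of $B$ drops from the trivial $2$ to $2/d_{Y,\omega} \leq 1$ — and it is exactly the irreducibility of $f$ over $\rational(T)$, which forces $\deg_Y P_\omega \geq 2$, that secures this gain.
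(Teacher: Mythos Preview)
Your proposal is correct and follows essentially the same route as the paper: reduce via Lemma~\ref{thm:P_omega} to counting specialisations of the finitely many $P_\omega$ that admit a rational root, apply Lemma~\ref{thm:number_rational_specialisation_with_zero} to each, and absorb the degree and $\log\norm{P_\omega}$ losses into the $O_d(1)$ constant. You are slightly more careful than the paper in two harmless respects: you dispose of $d_Y \leq 1$ explicitly, and you note that $P_\omega$ is irreducible over $\rational$ (needed to invoke Lemma~\ref{thm:number_rational_specialisation_with_zero}), which the paper leaves implicit in the definition of $P_\omega$ as a minimal polynomial.
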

\begin{proof}
    For \(\omega \subseteq \{1, \dots, d_Y\}\),
    let \(S_{\omega}(B)\) be the number of \(t\) with \(H(t) \leq B\)
    such that \(P_{\omega}(t, Y)\) has a rational root.
    By Lemmas~\ref{thm:P_omega}
    and \ref{thm:number_rational_specialisation_with_zero},
    we have
    \begin{align*}
        S_{\omega}(B)
        &\ll (\deg_Y P_{\omega})^8 (\deg_T P_{\omega})^4
        (\log \norm{P_{\omega}})^5 B^{2 / \deg_Y P_{\omega}}\\
        &\ll 2^{12 d_Y} d_T^4 (2^{d_Y} ((d_Y + 1) \log 2 + \log(d_T + 1)
        + \log \norm{f}))^5 B\\
        &\ll 2^{17 d_Y} d_Y^5 d_T^4 (\log(d_T + 1) + \log \norm{f})^5 B.
    \end{align*}
    There are at most \(d_T\) values of \(t\) such that \(a_0(t) = 0\).
    By Lemma~\ref{thm:P_omega},
    if \(t\) is such that \(a_0(t) \neq 0\) and \(f(t, Y)\) is reducible
    then \(t\) must be counted by some \(S_{\omega}(B)\).
    Therefore
    \[
        S_1(f, B)
        \leq d_T + \sum_{\omega \subseteq \{1, \dots, d_Y\}} S_{\omega}(B)
        \leq d_T + 2^{d_Y} \max_{\omega \subseteq \{1, \dots, d_Y\}}
        S_{\omega}(B).
    \]
    We conclude that
    \begin{align*}
        S_1(f, B)
        &\ll 2^{18 d_Y} d_Y^5 d_T^4 (\log(d_T + 1) + \log \norm{f})^5 B\\
        &\ll_d (\log \norm{f})^5 B,
    \end{align*}
    as desired.
\end{proof}

The exponent of \(B\)
in Proposition~\ref{thm:hilbert_irreducibility_two_variable}
is optimal as the following example shows.
\begin{example}
    Consider the polynomial
    \[
        f(T, Y) = Y^2 - T.
    \]
    Then \(f\) is irreducible in \(\rational[T, Y]\).
    Let \(a, b \in \integer\) with \(0 < a, b \leq \sqrt{B}\)
    and \(\gcd(a, b) = 1\).
    If \(t = a^2 / b^2\),
    then \(H(t) = \max(a^2, b^2) \leq B\)
    and we can factor \(f(t, Y)\) in \(\rational[Y]\) as
    \[
        f(t, Y) = Y^2 - \frac{a^2}{b^2}
        = \left(Y + \frac{a}{b}\right) \left(Y - \frac{a}{b}\right).
    \]
    This gives the bound
    \[
        S_1(f, B) \gg B.
    \]
\end{example}

We now prove Theorem~\ref{thm:hilbert_irreducibility_non_uniform}
for \(s = 1\) and \(r \geq 1\).
\begin{lemma}\label{thm:hilbert_irreducibility_r}
    Let \(f \in \integer[T, Y_1, \dots, Y_r]\) be an irreducible polynomial
    of degree \(d\).
    Then
    \[
        S_1(f, B) \ll_{d, r} (\log \norm{f})^5 B.
    \]
\end{lemma}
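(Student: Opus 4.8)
The plan is to reduce to the two-variable case already treated in Proposition~\ref{thm:hilbert_irreducibility_two_variable}, by applying a Kronecker substitution in the variables \(Y_1, \dots, Y_r\), in the spirit of the proof of \cite[Theorem~1.10]{cluckersImprovementsDimensionGrowth2025}. First I would dispose of the degenerate case in which \(f\) does not involve any of \(Y_1, \dots, Y_r\): then \(f \in \integer[T]\), the specialisation \(f(t, Y_1, \dots, Y_r)\) is a nonzero constant --- hence not reducible --- for all but at most \(d\) values of \(t\), so \(S_1(f, B) = O(d)\) and the bound is trivial. Hence I may assume \(f\) has positive degree in \((Y_1, \dots, Y_r)\), and then Gauss's lemma shows that \(f\) is primitive as an element of \(\integer[T][Y_1, \dots, Y_r]\) and irreducible over \(\rational(T)\).

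Next I would fix positive integers \(e_1, \dots, e_r\), bounded in terms of \(d\) and \(r\) only, satisfying two requirements. First, the substitution \(Y_i \mapsto Y^{e_i}\) should send the monomials of total degree at most \(d\) in \(Y_1, \dots, Y_r\) to pairwise distinct powers of \(Y\); the choice \(e_i = (d+1)^{i-1}\) already achieves this, by uniqueness of base-\((d+1)\) expansions. Second, the polynomial
\[
    \widetilde{f}(T, Y) \coloneq f(T, Y^{e_1}, \dots, Y^{e_r}) \in \integer[T, Y]
\]
should be irreducible over \(\rational(T)\). The first requirement guarantees that each coefficient of \(\widetilde{f}\) is a single coefficient of \(f\), whence \(\norm{\widetilde{f}} = \norm{f}\) and \(\deg \widetilde{f} \le d \max_i e_i \ll_{d, r} 1\); and the second requirement, together with the primitivity of \(f\) over \(\integer[T]\) (inherited by \(\widetilde{f}\), again by the first requirement), makes \(\widetilde{f}\) irreducible over \(\rational\).

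Granting such a choice, the comparison of counting functions is straightforward. Suppose \(t \in \rational\) is such that \(f(t, Y_1, \dots, Y_r)\) is reducible over \(\rational\); since \((T - t) \nmid f\) the specialisation is nonzero, and for all but \(O(d)\) values of \(t\) it is non-constant, so we may assume it is and write \(f(t, Y_1, \dots, Y_r) = h_1 h_2\) with \(h_1, h_2 \in \rational[Y_1, \dots, Y_r]\) non-constant. Applying the ring homomorphism \(Y_i \mapsto Y^{e_i}\) gives \(\widetilde{f}(t, Y) = \widehat{h}_1 \widehat{h}_2\), where \(\widehat{h}_j\) is the image of \(h_j\). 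Since \(h_j\) divides \(f(t, Y_1, \dots, Y_r)\), every monomial occurring in \(h_j\) has total degree at most \(d\), so by the first requirement the leading monomials of \(h_1\) and \(h_2\) survive the substitution and \(\widehat{h}_1, \widehat{h}_2\) are non-constant; hence \(\widetilde{f}(t, Y)\) is reducible over \(\rational\). It follows that \(S_1(f, B) \le S_1(\widetilde{f}, B)\), and Proposition~\ref{thm:hilbert_irreducibility_two_variable} applied to the irreducible bivariate polynomial \(\widetilde{f}\) yields
\[
    S_1(f, B) \le S_1(\widetilde{f}, B) \ll_{\deg \widetilde{f}} (\log \norm{\widetilde{f}})^5 B \ll_{d, r} (\log \norm{f})^5 B,
\]
as required.

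The main obstacle is the second requirement above: producing Kronecker exponents \(e_1, \dots, e_r\), bounded only in terms of \(d\) and \(r\), for which \(\widetilde{f}\) remains irreducible. This is exactly the point where one must invoke, and make effective, the fact that a generic Kronecker substitution preserves irreducibility of an irreducible multivariate polynomial over a field; keeping the exponents small enough that \(\deg \widetilde{f}\) and \(\norm{\widetilde{f}}\) stay under control is the part requiring care, while everything else in the argument is bookkeeping.
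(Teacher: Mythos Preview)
Your plan diverges from the paper precisely at the point you flag as the obstacle, and that obstacle is genuine: the assertion that one can choose Kronecker exponents \(e_1, \dots, e_r\), bounded purely in terms of \(d\) and \(r\), for which \(\widetilde{f}\) is irreducible over \(\rational(T)\) is neither proved nor standard, and it is false as stated. Take \(f = Y_1 - Y_2 \in \integer[T, Y_1, Y_2]\): your degenerate case (\(f\) constant in the \(Y_i\)) does not apply, your first requirement forces \(e_1 \neq e_2\), and then \(\widetilde{f} = Y^{e_1} - Y^{e_2}\) has the factor \(Y^{\min(e_1, e_2)}\), so is reducible for every admissible choice. You could patch this particular instance by also setting aside \(\deg_T f = 0\), but you give no argument that the phenomenon is confined to such trivialities, and ``a generic Kronecker substitution preserves irreducibility'' is simply not a theorem one can invoke off the shelf.

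The paper avoids the issue entirely. It fixes the single substitution \(Y_i \mapsto Y^{a^{i-1}}\) with \(a = 1 + \max_i \deg_{Y_i} f\), allows \(\kr(f)\) to factor as \(\prod_{i=1}^m Q_i\) in \(\integer[T, Y]\), and then appeals to the proof of \cite[Lemma~13.1.3]{friedFieldArithmetic2023}. That lemma supplies nonzero polynomials \(c_I \in \integer[T]\), one for each nonempty proper \(I \subseteq \{1, \dots, m\}\) and each of degree at most \(\deg_T f\), with the property that if every \(Q_i(t, Y)\) is irreducible in \(\rational[Y]\) and every \(c_I(t) \neq 0\), then \(f(t, Y_1, \dots, Y_r)\) is irreducible in \(\rational[Y_1, \dots, Y_r]\). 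This yields
\[
    S_1(f, B) \le \sum_{i=1}^m S_1(Q_i, B) + \sum_I \deg c_I,
\]
and Proposition~\ref{thm:hilbert_irreducibility_two_variable} applied to each irreducible \(Q_i\) (with \(\norm{Q_i} \ll_{d,r} \norm{f}\) by the factor-height bound of \cite[Chapter~3, Proposition~2.3]{langFundamentalsDiophantineGeometry1983}) finishes the argument. The Fried--Jarden input is exactly what replaces your unproven irreducibility hypothesis, and it is the missing idea in your proposal.
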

\begin{proof}
    We adapt the proof of
    \cite[Theorem~1.10]{cluckersImprovementsDimensionGrowth2025}.
    Let \(a \coloneq 1 + \max_{1 \leq i \leq r} \deg_{Y_i} f\).
    Consider the Kronecker substitution
    \[
        \kr(f)(T, Y) \coloneq f(T, Y, Y^a, Y^{a^2}, \dots, Y^{a^{r - 1}})
        \in \integer[T, Y].
    \]
    Notice that \(\norm{\kr(f)} = \norm{f}\)
    and \(\deg(\kr(f)) \leq a^{r - 1} d\).
    Let
    \[
        \kr(f)(T, Y) = \prod_{i = 1}^m Q_i(T, Y)
    \]
    be the factorisation of \(\kr(f)\) into irreducible polynomials
    in \(\integer[T, Y]\).
    Then \(\deg(Q_i) \leq \deg(\kr(f)) \leq a^{r - 1} d\).
    By
    \cite[Chapter~3, Proposition~2.3]{langFundamentalsDiophantineGeometry1983},
    we have
    \[
        \norm{Q_i} \leq \norm{Q_i} \norm{\kr(f) / Q_i}
        \leq 4^{(\deg(\kr(f)) + 1)^2} \norm{\kr(f)}
        \ll_{d, r} \norm{f}.
    \]

    From the proof of \cite[Lemma~13.1.3]{friedFieldArithmetic2023},
    there exists a collection of non-zero polynomials \(c_I \in \integer[T]\),
    where \(I\) ranges over the non-empty proper subsets
    \(I \subseteq \{1, \dots, m\}\),
    with the following properties:
    \begin{itemize}
    \item \(\deg c_I \leq \deg_T f\) for all \(I\).
    \item
        If \(Q_i(t, Y)\) is irreducible in \(\rational[Y]\) for all \(i\)
        and \(c_I(t) \neq 0\) for all \(I\),
        then \(f(t, Y_1, \dots, Y_r)\)
        is irreducible in \(\rational[Y_1, \dots, Y_r]\).
    \end{itemize}
    Therefore
    \[
        S_1(f, B) \leq \sum_{1 \leq i \leq m} S_1(Q_i, B)
        + \sum_{I \subseteq \{1, \dots, m\}} \deg c_I.
    \]
    Applying Proposition~\ref{thm:hilbert_irreducibility_two_variable}
    to each \(Q_i\) gives the bound
    \[
        S_1(Q_i, B) \ll_{d, r} (\log \norm{Q_i})^5 B
        \ll_{d, r} (\log \norm{f})^5 B.
    \]
    Since \(m \leq a^{r - 1} d\) and \(\deg c_I \leq d\),
    we obtain the desired result.
\end{proof}

\subsection{Proof of Theorem~\ref{thm:hilbert_irreducibility_non_uniform}}

We proceed by induction on \(s\).
If \(s = 1\),
the theorem follows from Lemma~\ref{thm:hilbert_irreducibility_r}.
Now assume \(s > 1\)
and suppose that the theorem has been proved for \(s - 1\).

Let \(f \in \integer[T_1, \dots, T_s, Y_1, \dots, Y_r]\) be irreducible.
By applying Lemma \ref{thm:hilbert_irreducibility_r},
there are \(O_{d, r, s}((\log \norm{f})^5 B_1)\)
values of \(t \in \rational\) with \(H(t) \leq B_1\)
such that \(f(t, T_2, \dots, T_s, Y_1, \dots, Y_r)\) is reducible.
For these values,
there are trivially
\[
    O_s\left(\prod_{i = 2}^s B_i^2\right)
\]
specialisations of \((T_2, \dots, T_s)\)
that contribute to \(S(f, B_1, \dots, B_s)\).
Therefore the total contribution from this case is
\begin{equation}\label{eq:hilbert_irreducibility_reducible_case}
    O_{d, r, s}\left((\log \norm{f})^5 B_1 \prod_{i = 2}^s B_i^2\right).
\end{equation}

Now suppose that \(f(t, T_2, \dots, T_s, Y_1, \dots, Y_r)\) is irreducible.
Write \(f_t\) for the polynomial obtained by clearing denominators.
We note that \(\norm{f_t} \leq (d + 1) \norm{f} B_1^d\).
By the inductive hypothesis,
we have
\begin{align*}
    S(f_t, B_2, \dots, B_s)
    &\ll_{d, r, s} B_2 \left(\prod_{i = 3}^s B_i^2\right)
    (\log\norm{f_t} + \log B_s)^5\\
    &\ll_{d, r, s} B_2 \left(\prod_{i = 3}^s B_i^2\right)
    (\log((d + 1) \norm{f} B_1^d) + \log B_s)^5\\
    &\ll_{d, r, s} B_2 \left(\prod_{i = 3}^s B_i^2\right)
    (\log\norm{f} + \log B_s)^5.
\end{align*}
There are \(O(B_1^2)\) choices for \(t\),
so these \(t\) contribute
\[
    O_{d, r, s}\left(B_1^2 B_2 \left(\prod_{i = 3}^s B_i^2\right)
    (\log\norm{f} + \log B_s)^5\right)
\]
reducible specialisations.
Since \(B_1 \leq B_2\),
this is at most
\begin{equation}\label{eq:hilbert_irreducibility_irreducible_case}
    O_{d, r, s}\left(B_1 \left(\prod_{i = 2}^s B_i^2\right)
    (\log\norm{f} + \log B_s)^5\right).
\end{equation}
Theorem~\ref{thm:hilbert_irreducibility_non_uniform}
follows after adding the contributions from
\eqref{eq:hilbert_irreducibility_reducible_case}
and \eqref{eq:hilbert_irreducibility_irreducible_case}.

\section{Rational points on affine plane curves}
\label{sec:rational_points_curves}

In this section,
we prove Theorem~\ref{thm:rational_points_two_variable}.

\subsection{Counting function for \texorpdfstring{\(\proj^1 \times \proj^1\)}{ℙ¹ × ℙ¹}}

Instead of working with curves in \(\aff^2\) directly,
we prove a bound for curves in \(\proj^1 \times \proj^1\).
We define a counting function for curves in \(\proj^1 \times \proj^1\)
as follows.
\begin{definition}[Bihomogeneous polynomials]
    A polynomial \(f \in \integer[X_1, X_2, Y_1, Y_2]\)
    is \emph{bihomogeneous} of \emph{bidegree} \((d_1, d_2)\)
    if it is homogeneous in \((X_1, X_2)\) of degree \(d_1\)
    and homogeneous in \((Y_1, Y_2)\) of degree \(d_2\).
\end{definition}
\begin{definition}
    For a bihomogeneous polynomial \(f \in \integer[X_1, X_2, Y_1, Y_2]\),
    define
    \begin{multline*}
        \nptimesp(f, B_1, B_2) \coloneq
        \#\{((x_1 : x_2), (y_1 : y_2)) \in \proj^1 \times \proj^1 :\\
        f(x_1, x_2, y_1, y_2) = 0
        \text{ and } H(x_1 : x_2) \leq B_1
        \text{ and } H(y_1 : y_2) \leq B_2\}.
    \end{multline*}
\end{definition}

\subsection{Main results}

The main result of this section
is the following bound for the number of rational points
on an irreducible plane curve in \(\proj^1 \times \proj^1\).
\begin{theorem}\label{thm:P_times_P_bound}
    Let \(f \in \integer[X_1, X_2, Y_1, Y_2]\) be an irreducible bihomogeneous
    polynomial of bidegree \((d_1, d_2)\).
    Then
    \[
        \nptimesp(f, B_1, B_2)
        \ll d_1^7 d_2^4 B_1^{2 / d_2} (\log B_2)^2.
    \]
\end{theorem}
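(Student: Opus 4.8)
The plan is to reduce the bound to a Bézout estimate on \(\proj^1 \times \proj^1\), with all of the analytic content isolated in the construction of an auxiliary polynomial by the determinant method (carried out in Section~\ref{sec:auxiliary_polynomial}). First I would dispose of the degenerate cases. If \(f = X_i\) or \(f = Y_j\), then \(V(f)\) is a coordinate fibre and contributes \(O(B_1^2)\) or \(O(B_2^2)\) points, well within the claimed bound, so I may assume \(f\) is genuinely bihomogeneous with \(d_1, d_2 \geq 1\); and when \(d_2 = 1\), or more generally when \(B_1\) is bounded in terms of \(d_1\) and \(d_2\), I may use the trivial estimate \(\nptimesp(f, B_1, B_2) \ll d_2 B_1^2\), valid because for each of the \(O(B_1^2)\) admissible \((x_1 : x_2)\) the binary form \(f(x_1, x_2, Y_1, Y_2)\) is nonzero — the coefficients of \(f\) in \((Y_1, Y_2)\) have no common factor in \((X_1, X_2)\), as \(f\) is irreducible — and hence has at most \(d_2\) projective roots. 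Thus I may assume \(d_1, d_2 \geq 2\) and \(B_1\) large relative to \(d_1, d_2\).

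The main input, which I would state as a separate proposition and prove by the determinant method, is the existence of a nonzero bihomogeneous \(g \in \integer[X_1, X_2, Y_1, Y_2]\) of some bidegree \((e_1, e_2)\) with \(f \nmid g\), vanishing at every rational point \(((x_1 : x_2), (y_1 : y_2)) \in V(f)\) with \(H(x_1 : x_2) \leq B_1\) and \(H(y_1 : y_2) \leq B_2\), where \(e_2\) is bounded in terms of \(d_1, d_2\) alone, \(e_1 \ll_{d_1, d_2} B_1^{2/d_2} (\log B_2)^2\), and the dependence on \(d_1, d_2\) is sharp enough that \(d_1 e_2 + d_2 e_1 \ll d_1^7 d_2^4 B_1^{2/d_2} (\log B_2)^2\). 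The mechanism is the familiar one: choose a prime \(p \asymp B_1^{2/d_2}\), partition the rational points according to their reduction modulo \(p\) on \(V(f)\), and for the matrix of evaluations of all monomials of bidegree \((e_1, e_2)\) at these points show, via a \(p\)-adic expansion of the points inside each residue class, that every minor of the relevant size is divisible by a power of \(p\) exceeding its archimedean size; the evaluation matrix then has rank below the codimension of the space of multiples of \(f\), so some nonzero bihomogeneous \(g\) of bidegree \((e_1, e_2)\) with \(f \nmid g\) vanishes at all the points. Because \(V(f)\) meets each fibre \(\{(x_1 : x_2)\} \times \proj^1\) in only \(d_2\) points, the \(Y\)-degree needed for interpolation is bounded, so \(e_2\) can be kept \(\ll_{d_1, d_2} 1\) while \(e_1\) is of order \(B_1^{2/d_2}\); handling the unequal heights \(B_1, B_2\) and optimising the archimedean-versus-\(p\)-adic trade-off is what produces the \((\log B_2)^2\) loss over the clean exponent \(2/d_2\).

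Granting the proposition, the theorem is immediate. Since \(f\) is irreducible and \(f \nmid g\), the curves \(V(f)\) and \(V(g)\) in \(\proj^1 \times \proj^1\) have no common component, so by the intersection pairing on \(\proj^1 \times \proj^1\), where curves of bidegrees \((a, b)\) and \((c, d)\) meet in at most \(ad + bc\) points, the set \(V(f) \cap V(g)\) has at most \(d_1 e_2 + d_2 e_1\) points; every point counted by \(\nptimesp(f, B_1, B_2)\) lies in this set, so
\[
    \nptimesp(f, B_1, B_2) \leq d_1 e_2 + d_2 e_1 \ll d_1^7 d_2^4 B_1^{2/d_2} (\log B_2)^2,
\]
as claimed. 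The principal difficulty is the auxiliary-polynomial construction itself — running the determinant method with \(B_1\) and \(B_2\) on a different footing, keeping the \(Y\)-degree bounded independently of the heights, and confining the deviation from \(B_1^{2/d_2}\) to a power of \(\log B_2\) rather than a factor \(B_i^{\varepsilon}\) as in earlier work; once that proposition is available (deferred to Section~\ref{sec:auxiliary_polynomial}) the present argument is only the Bézout count above together with the elementary degenerate cases.
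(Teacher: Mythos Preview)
Your overall architecture matches the paper's --- reduce to a B\'ezout count after constructing an auxiliary polynomial via the determinant method --- but you have omitted one genuine case that the paper handles explicitly: \(f\) irreducible over \(\rational\) yet not absolutely irreducible. The determinant method, whether in the single-prime \(p\)-adic form you sketch or in the global Siegel's-lemma form the paper actually uses (following Vermeulen), requires geometric irreducibility of the curve; without it the \(p\)-adic divisibility of the minors (equivalently, the bound on \(b(f)\) via Noether forms in Lemma~\ref{thm:b_f_upper_bound}) breaks down, since \(f \bmod p\) is then reducible for all large \(p\). The paper disposes of this case by a separate short argument (Lemma~\ref{thm:irreducible_but_not_absolutely_irreducible_bound}): any rational zero of \(f\) lies on every Galois-conjugate absolutely irreducible factor, and B\'ezout applied to two distinct conjugate factors bounds the rational points by \((d_1 + d_2)^2\). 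Your ``degenerate cases'' cover only coordinate forms and the trivial fibre count, not this.

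Two smaller deviations are worth noting. First, the paper does not keep the \(Y\)-degree of the auxiliary polynomial bounded purely in terms of \(d_1, d_2\): both \(M_1\) and \(M_2\) grow with \(B_1\), constrained by \(M_2/M_1 \asymp 1/(d_1 \log B_2)\), and the coarser B\'ezout bound \(\deg f \cdot \deg g\) is used rather than your bidegree pairing \(d_1 e_2 + d_2 e_1\); your claim that \(e_2 = O_{d_1, d_2}(1)\) would require a different organisation of the determinant step and is not obviously free. Second, your determinant sketch is Heath--Brown's single-prime method, whereas the paper follows Vermeulen: a preliminary linear change of variables (Lemma~\ref{thm:widetilde_f_large_coefficient}) to ensure \(\abs{c_f} \gg_d \norm{f}\), then Siegel's lemma upper-bounds the common divisor \(\Delta\) and the Salberger-type estimate of Lemma~\ref{thm:determinant_lower_bound} lower-bounds it. Both routes are in principle viable, but the mechanism producing the \((\log B_2)^2\) factor --- and the final cleanup separating \(d_2 < B_2\) from \(d_2 \geq B_2\) to pass from \((\log(d_2 B_2))^2\) to \((\log B_2)^2\) --- is quite different from what you describe.
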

As a corollary,
we obtain the following bound for the number of rational points
on irreducible plane curves in \(\aff^2\).
This is the bound stated in Theorem~\ref{thm:rational_points_two_variable}.
\begin{corollary}\label{thm:rational_points_two_variable_d}
    Let \(f \in \integer[X_1, X_2]\) be an irreducible polynomial
    of degree \(d_1\) in \(X_1\) and degree \(d_2\) in \(X_2\).
    Then
    \[
        \nrat(f, B_1, B_2)
        \ll d_1^7 d_2^4 B_1^{2 / d_2} (\log B_2)^2.
    \]
\end{corollary}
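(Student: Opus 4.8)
The plan is to deduce Corollary~\ref{thm:rational_points_two_variable_d} from Theorem~\ref{thm:P_times_P_bound} by a standard affine-to-biprojective bihomogenisation, the only subtlety being how to relate the degree of the bihomogenisation to the individual degrees $d_1, d_2$ and how to match the two height conditions. First I would set $D_1 = \deg_{X_1} f$ and $D_2 = \deg_{X_2} f$ (these are exactly the $d_1$, $d_2$ of the statement) and define the bihomogenisation
\[
    F(X_1, X_2, Y_1, Y_2)
    \coloneq X_2^{d_1} Y_2^{d_2}
    f\!\left(\frac{X_1}{X_2}, \frac{Y_1}{Y_2}\right),
\]
which is bihomogeneous of bidegree $(d_1, d_2)$ with integer coefficients; since $f$ is irreducible in $\rational[X_1, X_2]$, $F$ is irreducible as a bihomogeneous polynomial. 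Then I would check the injection on points: an affine solution $(x_1, x_2) \in \rational^2$ of $f = 0$ with $H(x_i) \le B_i$, writing $x_1 = a_1/b_1$ and $x_2 = a_2/b_2$ in lowest terms, gives the biprojective point $((a_1 : b_1), (a_2 : b_2)) \in \proj^1 \times \proj^1$ lying on $F = 0$, with $H(a_1 : b_1) \le B_1$ and $H(a_2 : b_2) \le B_2$, and distinct affine points give distinct biprojective points. Applying Theorem~\ref{thm:P_times_P_bound} to $F$ then yields
\[
    \nrat(f, B_1, B_2)
    \le \nptimesp(F, B_1, B_2)
    \ll d_1^7 d_2^4 B_1^{2 / d_2} (\log B_2)^2,
\]
which is exactly the claimed bound.

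The one genuine point of care is the direction of the height inequality. Theorem~\ref{thm:P_times_P_bound} bounds points with $H(x_1 : x_2) \le B_1$ in the \emph{first} $\proj^1$ factor (the one carrying the $X$ variables), and its exponent of $B_1$ is $2/d_2$ where $d_2$ is the bidegree in the \emph{second} pair $(Y_1, Y_2)$. In the affine statement the roles line up correctly: $B_1$ bounds the height of $x_1$ (the $X$-variable), $d_2 = \deg_{X_2} f$ is the degree in the other variable, and indeed the exponent is $2/d_2$. So the correspondence is the obvious one and no swap of indices is needed — but I would state it carefully to avoid the classical confusion between "degree in $X_i$" and "the variable being bounded."

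The main (and essentially only) obstacle is therefore purely bookkeeping: confirming that bihomogenisation does not inflate the bidegree beyond $(d_1, d_2)$ (it does not, since by definition $\deg_{X_1} f = d_1$ and $\deg_{X_2} f = d_2$, so the padding powers $X_2^{d_1}$ and $Y_2^{d_2}$ are exactly right), and that irreducibility is preserved (a nontrivial bihomogeneous factorisation of $F$ would dehomogenise to a nontrivial factorisation of $f$, contradicting irreducibility). No analytic input is required beyond Theorem~\ref{thm:P_times_P_bound} itself; the corollary is a formal consequence.
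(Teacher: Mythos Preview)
Your argument is correct and matches the paper's own proof essentially verbatim: the paper also deduces the corollary from Theorem~\ref{thm:P_times_P_bound} via the bihomogenisation \(g(X_1,X_2,Y_1,Y_2)=X_2^{d_1}Y_2^{d_2}f(X_1/X_2,Y_1/Y_2)\) and the injection of affine zeros of height at most \((B_1,B_2)\) into biprojective zeros of \(g\) of the same height, exactly as you do. Your extra remarks on irreducibility of the bihomogenisation and on the index matching are correct and are points the paper leaves implicit.
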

In particular,
we have
\[
    \nrat(f, B_1, B_2)
    \ll_{d_1, d_2} B_1^{2 / d_2} (\log B_2)^2.
\]

To prove Theorem~\ref{thm:P_times_P_bound},
we modify the proof of \cite[Theorem~3.1]{vermeulenDimensionGrowthAffine2024}.
The approach is to use the determinant method
to construct and bound the degree of an auxiliary polynomial \(g\),
which is not divisible by \(f\)
and vanishes at the zeros of \(f\) of height at most \((B_1, B_2)\).
We construct \(g\) in Section~\ref{sec:auxiliary_polynomial}.
Finally we apply Bézout's theorem to \(f\) and \(g\)
to bound \(\nptimesp(f, B_1, B_2)\).

\subsection{Lower bounds}

Before we prove Theorem~\ref{thm:P_times_P_bound},
we give some examples of polynomials satisfying the lower bound
\(\nrat(f, B) \gg B^{2 / d_2}\).
\begin{example}
    Let \(d_2 \geq 1\).
    Consider the polynomial
    \[
        f(X_1, X_2) = X_1 X_2^{d_2} + X_1 + X_2^{d_2} \in \integer[X_1, X_2].
    \]
    Note that \(f\) is irreducible in \(\rational[X_1, X_2]\).
    Let \(a / b \in \rational \setminus \{-1\}\)
    with \(H(a / b) \leq (B / 2)^{1 / d_2}\).
    Then \(H(-a^{d_2} / (a^{d_2} + b^{d_2})) \leq B\) and
    \[
        f\left(-\frac{a^{d_2}}{a^{d_2} + b^{d_2}}, \frac{a}{b}\right) = 0,
    \]
    so \(\nrat(f, B) \gg B^{2 / {d_2}}\).
\end{example}

\begin{example}
    Consider the polynomial
    \[
        f(X_1, X_2) = X_1^2 X_2^2 + X_1^2 - X_2^2 \in \integer[X_1, X_2].
    \]
    Note that \(f\) is irreducible in \(\rational[X_1, X_2]\).
    Let \((a, b, c)\) be a primitive Pythagorean triple,
    with \(a, b, c \leq B\).
    Then \(f(a / c, a / b) = 0\).
    There are \(\Omega(B)\) such Pythagorean triples,
    see for example \cite[328]{lehmerAsymptoticEvaluationCertain1900},
    so
    \[
        \nrat(f, B) \gg B = B^{2 / \deg_2 f}.
    \]
\end{example}

\subsection{Bézout's theorem}

Bézout's theorem is the main tool used to bound the number of zeros of \(f\).
We use the following special case of Bézout's theorem,
see for example \cite[Example~4.9]{shafarevichBasicAlgebraicGeometry2013}
for a proof.
\begin{lemma}[Bézout's theorem]\label{thm:bézout}
    Let \(f \in \rational[X_1, X_2, Y_1, Y_2]\)
    be irreducible and bihomogeneous.
    Let \(g \in \rational[X_1, X_2, Y_1, Y_2]\)
    be a bihomogeneous polynomial that is not divisible by \(f\).
    Then there are at most \(\deg f \cdot \deg g\) points
    \(((x_1 : x_2), (y_1 : y_2)) \in \proj^1 \times \proj^1\)
    such that \(f(x_1, x_2, y_1, y_2) = g(x_1, x_2, y_1, y_2) = 0\).
\end{lemma}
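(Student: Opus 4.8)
The plan is to reduce the $\proj^1 \times \proj^1$ statement to the classical Bézout bound for projective plane curves, via the Segre-type dehomogenisation/rehomogenisation trick. First I would dispose of the trivial fibre components: since $f$ is irreducible, the only way $f$ can vanish identically on a whole ruling $\{(x_1:x_2)\} \times \proj^1$ is if $f$ is divisible by a linear form in $(Y_1, Y_2)$ alone, which (again by irreducibility) would force $f$ itself to be such a linear form, and similarly on the other side; in the genuinely bivariate case there are no such components, and the degenerate case ($f$ linear in one pair of variables) is handled directly since then $\deg f$ is small and one counts solutions fibrewise. So assume $d_1, d_2 \geq 1$ with $f$ depending on both pairs.

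Next I would pass to an affine chart. Write $F(x, y) \coloneq f(x, 1, y, 1) \in \rational[x, y]$ and $G(x, y) \coloneq g(x, 1, y, 1)$; these have $\deg_x F \leq d_1$, $\deg_y F \leq d_2$ (and likewise for $G$, with its bidegree). A common zero of $f$ and $g$ in $\proj^1 \times \proj^1$ either lies in this chart or lies on one of the four boundary lines $x_2 = 0$ or $y_2 = 0$; the points on a boundary line form the zero set of a pair of single-variable polynomials (the restrictions of $f$ and $g$), of which there are $O(d_1 + d_2)$, and these can be absorbed into the final count after choosing a generic coordinate so that no common component of $f, g$ lies on the boundary. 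For the affine points I would then invoke the classical resultant bound: since $f \nmid g$ and $f$ is irreducible, $F$ and $G$ have no common factor in $\rational[x, y]$, so $\operatorname{Res}_y(F, G)$ is a nonzero polynomial in $x$ of degree at most $\deg_x F \cdot \deg_y G + \deg_x G \cdot \deg_y F \leq \deg f \cdot \deg g$ (using the standard degree estimate for resultants of bihomogeneous polynomials), and each common affine zero projects to a root of this resultant; a symmetric argument with $\operatorname{Res}_x$ controls the number of $x$-values with multiple $y$-preimages. Combining gives the bound $\deg f \cdot \deg g$ on the affine common zeros.

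The main obstacle I anticipate is bookkeeping at the boundary: one must ensure that the points counted at $x_2 = 0$ and $y_2 = 0$, together with the affine count, really sum to $\deg f \cdot \deg g$ rather than something larger, and this is cleanest by instead using a projective change of coordinates on each $\proj^1$ factor to move all common zeros into a single affine chart before applying the resultant argument — which is legitimate since there are only finitely many common zeros. Alternatively, and more cleanly, I would simply cite the intersection-theoretic statement: the intersection product of the two divisor classes $[f]$ and $[g]$ on $\proj^1 \times \proj^1$ has total degree $d_1 d_2' + d_1' d_2 \leq (d_1 + d_2)(d_1' + d_2') = \deg f \cdot \deg g$, and since $f$ is irreducible and $f \nmid g$ the intersection is proper (no common component), so the number of intersection points is at most this intersection number. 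This is exactly the content of the cited \cite[Example~4.9]{shafarevichBasicAlgebraicGeometry2013}, and given the paper's conventions I would present the short resultant proof as the self-contained argument and remark that it also follows from the intersection-theoretic fact.
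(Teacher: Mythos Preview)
The paper does not give its own proof of this lemma: it simply cites \cite[Example~4.9]{shafarevichBasicAlgebraicGeometry2013} and moves on. Your closing paragraph --- observing that the intersection number of the divisor classes is $d_1 d_2' + d_1' d_2 \leq (d_1+d_2)(d_1'+d_2') = \deg f \cdot \deg g$ and that irreducibility of $f$ together with $f \nmid g$ forces a proper intersection --- is exactly the content of that citation, so in that sense your proposal already contains the paper's ``proof''.

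The resultant argument you sketch first is genuinely additional, and largely sound, but has two rough spots worth noting. First, the direction of your ruling observation is reversed: if $f$ vanishes on all of $\{(x_1:x_2)\}\times\proj^1$ then $f$ is divisible by a linear form in $(X_1,X_2)$, not $(Y_1,Y_2)$. Second, the step from ``$\operatorname{Res}_y(F,G)$ has at most $d_1 d_2' + d_1' d_2$ roots in $x$'' to ``there are at most that many common \emph{points}'' is not immediate --- bounding the $x$-coordinates and the $y$-coordinates separately does not by itself bound the number of pairs --- and your appeal to a symmetric $\operatorname{Res}_x$ argument does not close that gap without further work (one really wants the local intersection multiplicity interpretation of the resultant, at which point you are back to the intersection-theoretic statement). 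Your suggested fix of moving all common zeros into one affine chart is fine, but then the clean finish is still the intersection-theoretic one. Given that the paper is content with the citation, the safest and shortest write-up is simply your final paragraph.
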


\subsection{Reductions to special cases}

In the construction of the auxiliary polynomial,
we use a determinant estimate
that applies to absolutely irreducible polynomials \(f\),
so we need a separate argument to handle the case
where \(f\) is irreducible but not absolutely irreducible.
The following lemma shows that Theorem~\ref{thm:P_times_P_bound}
holds in this case.
\begin{lemma}\label{thm:irreducible_but_not_absolutely_irreducible_bound}
    Let \(f \in \integer[X_1, X_2, Y_1, Y_2]\) be a bihomogeneous polynomial
    of bidegree \((d_1, d_2)\).
    If \(f\) is irreducible over \(\rational\),
    but not absolutely irreducible,
    then
    \[
        \nptimesp(f, B_1, B_2) \leq (d_1 + d_2)^2.
    \]
\end{lemma}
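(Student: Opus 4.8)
The plan is to exploit the fact that an irreducible but not absolutely irreducible $f$ factors over $\closure{\rational}$ into several conjugate factors, and a rational point on the curve must lie on \emph{all} of these conjugate factors simultaneously, forcing it into a very small set. First I would write $f = \prod_{\sigma} f^{\sigma}$, where the product runs over the Galois conjugates of one absolutely irreducible factor $f_0$ over a number field $K$, and each $f^{\sigma} = f_0^{\sigma}$ is bihomogeneous of bidegree $(e_1, e_2)$ with $e_1 \le d_1$, $e_2 \le d_2$; there are at least two distinct factors in this product since $f$ is not absolutely irreducible. A point $((x_1:x_2),(y_1:y_2)) \in \proj^1 \times \proj^1$ with $x_i, y_i \in \rational$ and $f(x_1,x_2,y_1,y_2) = 0$ satisfies $f^{\sigma}(x_1,x_2,y_1,y_2) = 0$ for \emph{every} $\sigma$, because applying $\sigma$ to the equation $f^{\tau}(x_1,x_2,y_1,y_2)=0$ (which holds for some $\tau$) just permutes the factors and the coordinates are fixed by $\sigma$.

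Next I would take two distinct conjugates $f_1 = f^{\sigma_1}$ and $f_2 = f^{\sigma_2}$. Since both are absolutely irreducible and distinct, neither divides the other, so they have no common factor in $\closure{\rational}[X_1,X_2,Y_1,Y_2]$. I would then invoke Bézout's theorem for $\proj^1 \times \proj^1$ (Lemma~\ref{thm:bézout} — its proof works over any field, in particular over $\closure{\rational}$, when $f_1$ is irreducible and $f_2$ is not divisible by $f_1$) to conclude that the set of points in $\proj^1 \times \proj^1$ on which both $f_1$ and $f_2$ vanish has cardinality at most $\deg f_1 \cdot \deg f_2 = (e_1 + e_2)^2 \le (d_1 + d_2)^2$. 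Since every rational point counted by $\nptimesp(f, B_1, B_2)$ lies in this set (ignoring the height restriction, which only shrinks the count), we get $\nptimesp(f, B_1, B_2) \le (d_1 + d_2)^2$, as claimed.

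The main thing to be careful about is the bookkeeping with the Galois action: I must check that the conjugate factors $f^{\sigma}$ are genuinely polynomials (after suitably normalising, e.g. clearing denominators or scaling a coefficient to be rational) and that ``not absolutely irreducible'' really yields at least two distinct conjugates rather than a single factor repeated — but a repeated factor would contradict $f$ being squarefree, which follows from $f$ being irreducible over $\rational$ (an irreducible polynomial over a field of characteristic zero has no repeated factors over the algebraic closure, since $\gcd(f, \partial f) $ would be a nontrivial rational factor). A minor point is that $f_1, f_2$ need not have integer coefficients, but Bézout over $\closure{\rational}$ does not require this. I expect the only real subtlety is phrasing the descent from $f^{\tau}(\text{point}) = 0$ for some $\tau$ to $f^{\sigma}(\text{point}) = 0$ for all $\sigma$ cleanly; everything else is immediate.
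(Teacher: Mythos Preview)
Your argument is correct, and it takes a genuinely different route from the paper's own proof. The paper does not use the Galois action to place a rational point on \emph{all} conjugate factors simultaneously. Instead, for each absolutely irreducible factor \(g\) (over \(\closure{\rational}\)) it writes \(g = \sum_i \lambda_i g_i\) with the \(\lambda_i\) a \(\rational\)-basis for the span of the coefficients and the \(g_i\) rational bihomogeneous polynomials; a rational zero of \(g\) then forces \(g_i = 0\) for every \(i\), and since some \(g_i\) is not a scalar multiple of \(g\), B\'ezout bounds the rational zeros of that single factor by \((\deg g)^2\). The paper then \emph{sums} these bounds over all factors and uses \(\sum_i (\deg_1 f_i + \deg_2 f_i)^2 \le (d_1 + d_2)^2\).

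Your approach is cleaner: by observing that a rational point lies on \emph{every} conjugate, you intersect two distinct conjugates directly and get \((e_1 + e_2)^2\) in one shot, with no summation needed. In fact, since there are \(k \ge 2\) conjugates of common bidegree \((e_1,e_2) = (d_1/k, d_2/k)\), your argument actually yields the stronger bound \(((d_1+d_2)/k)^2 \le (d_1+d_2)^2/4\), though you did not claim it. The paper's linear-algebra decomposition, on the other hand, is a slightly more general device: it bounds rational points on any absolutely irreducible non-rational curve without invoking the specific Galois structure of the factorisation of \(f\). Both arguments ultimately rest on the same B\'ezout step over \(\closure{\rational}\); your caveat that Lemma~\ref{thm:bézout} is only stated over \(\rational\) is well taken, but the paper itself implicitly uses it over \(\closure{\rational}\) in the same way.
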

\begin{proof}
    First we show that if \(g \in \closure{\rational}[X_1, X_2, Y_1, Y_2]\)
    is an absolutely irreducible bihomogeneous polynomial
    of bidegree \((e_1, e_2)\)
    that is not a multiple of a rational polynomial,
    then
    \begin{equation}\label{eq:P_times_P_not_rational_multiple}
        \nptimesp(g, B_1, B_2) \leq (e_1 + e_2)^2.
    \end{equation}

    Let \(\{\lambda_1, \dots, \lambda_l\}\)
    be a basis for the \(\rational\)-vector space
    spanned by the coefficients of \(g\).
    By writing the coefficients of \(g\)
    as a rational linear combination of the \(\lambda_i\),
    we can express \(g\) in the form \(g = \sum_{i = 1}^l \lambda_i g_i\)
    where the \(g_i\) are rational bihomogeneous polynomials
    of bidegree \((e_1, e_2)\).
    Since the \(\lambda_i\) are linearly independent over \(\rational\),
    all rational zeros of \(g\) must satisfy \(g = g_i = 0\).

    Since \(g\) is absolutely irreducible
    and not a multiple of a rational polynomial,
    there is a polynomial \(g_i\) that is not a multiple of \(g\).
    By Lemma~\ref{thm:bézout},
    there are at most \((e_1 + e_2)^2\) zeros of \(g\),
    which proves \eqref{eq:P_times_P_not_rational_multiple}.

    Let \(f_1, \dots, f_k\) be the irreducible factors
    of \(f\) over \(\closure{\rational}\).
    Every zero of \(f\) must be a zero of one of the \(f_i\).
    By applying \eqref{eq:P_times_P_not_rational_multiple} to every \(f_i\),
    we get
    \begin{align*}
        \nptimesp(f, B_1, B_2)
        &\leq \sum_{i = 1}^k \nptimesp(f_i, B_1, B_2)\\
        &\leq \sum_{i = 1}^k (\deg_1 f_i + \deg_2 f_i)^2\\
        &\leq (d_1 + d_2)^2.
    \end{align*}
    This completes the proof of the lemma.
\end{proof}

Let \(c_f\) be the \(X_1^{d_1} Y_1^{d_2}\) coefficient of \(f\).
The following lemma allows us to assume that \(c_f\) is large,
which is used in the construction of the auxiliary polynomial.
It is a slight refinement of
\cite[Lemma~3.9]{vermeulenDimensionGrowthAffine2024}.
\begin{lemma}\label{thm:widetilde_f_large_coefficient}
    Let \(f \in \integer[X_1, X_2, Y_1, Y_2]\)
    be a bihomogeneous polynomial of bidegree \((d_1, d_2)\).
    Then there exists a bihomogeneous
    polynomial \(\widetilde{f} \in \integer[X_1, X_2, Y_1, Y_2]\)
    of bidegree \((d_1, d_2)\) with the following properties.
    \begin{enumerate}
    \item \(\norm{\widetilde{f}}
        \leq d_1^{d_1} d_2^{d_2} (d_1 + 1) (d_2 + 1) \norm{f}\),
    \item \(\abs{c_{\widetilde{f}}} \geq \frac{\norm{\widetilde{f}}}
        {3^{d_1 + d_2} d_1^{d_1} d_2^{d_2} (d_1 + 1) (d_2 + 1)}\),
    \item \(\nptimesp(f, B_1, B_2)
        \leq \nptimesp(\widetilde{f}, d_1 B_1, d_2 B_2)\),
    \item If \(f\) is primitive, irreducible or absolutely irreducible,
        then \(\widetilde{f}\) has the same properties.
    \end{enumerate}
\end{lemma}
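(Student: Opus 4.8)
The plan is to build $\widetilde f$ from $f$ by an $\integer$-unimodular change of variables acting separately on $(X_1, X_2)$ and on $(Y_1, Y_2)$, namely a composition of the coordinate swaps $X_1 \leftrightarrow X_2$, $Y_1 \leftrightarrow Y_2$ and of integer shears $X_2 \mapsto s X_1 + X_2$, $Y_2 \mapsto t Y_1 + Y_2$ with $\abs{s} \le d_1 - 1$ and $\abs{t} \le d_2 - 1$ (this adapts the proof of \cite[Lemma~3.9]{vermeulenDimensionGrowthAffine2024}). Such a substitution preserves the bidegree, and a direct expansion shows that for the double shear by $(s, t)$ the coefficient of $X_1^{d_1} Y_1^{d_2}$ in the transformed polynomial is $f(1, s, 1, t)$; precomposing with the swap $X_1 \leftrightarrow X_2$ turns this into $f(s, 1, 1, t)$, and similarly on the $Y$-side. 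So, as $(s, t)$ and the swaps vary over the allowed ranges, the corner coefficient of the transformed polynomial runs over the values of $f$ on a rectangular grid of points of $\proj^1 \times \proj^1$ with $d_1 + 1$ distinct first coordinates and $d_2 + 1$ distinct second coordinates: for $d_i \ge 2$ the shears already supply $2 d_i - 1 \ge d_i + 1$ values, and for $d_i = 1$ the swap provides the second one.

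First I would show that at one of these grid points $\abs{f}$ is at least $\norm{f}$ divided by a constant depending only on $d_1, d_2$, by a two-step interpolation argument. Regard $f$ as a binary form of degree $d_2$ in $(Y_1, Y_2)$ whose coefficients $g_k \in \integer[X_1, X_2]$ are binary forms of degree $d_1$, and pick $k_0$ with $\norm{g_{k_0}} = \norm{f}$. The map sending a binary form of degree $d_1$ to its vector of values at $d_1 + 1$ distinct points of $\proj^1$ is a linear isomorphism (a nonzero degree-$d_1$ form cannot vanish at $d_1 + 1$ points), so the coefficient vector of $g_{k_0}$ is recovered from its values at the $d_1 + 1$ grid first coordinates by a (generalised) Vandermonde inverse; hence some first coordinate $(p : q)$ satisfies $\abs{g_{k_0}(p, q)} \ge \norm{f} / C_1(d_1)$. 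Then $f(p, q, Y_1, Y_2)$ is a binary form in $(Y_1, Y_2)$ of norm at least $\norm{f} / C_1(d_1)$, and the same argument in $(Y_1, Y_2)$ gives a second coordinate $(r : u)$ with $\abs{f(p, q, r, u)} \ge \norm{f} / \bigl(C_1(d_1) C_2(d_2)\bigr)$. Choosing $\widetilde f$ to be the transform realising $(p, q, r, u)$ as its corner coefficient yields property (2), once the constants $C_i$ are bounded, via the elementary symmetric functions of the (small integer) grid coordinates, compatibly with the factor $3^{d_1 + d_2}$ in the statement.

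It remains to check the four properties. Property (1) is a routine estimate: a single shear $X_2 \mapsto s X_1 + X_2$ multiplies each coefficient of $f$ by at most $\sum_m \binom{d_1}{m} \abs{s}^m = (1 + \abs{s})^{d_1} \le d_1^{d_1}$, the $Y$-shear contributes a factor $\le d_2^{d_2}$, and swaps leave $\norm{\cdot}$ unchanged. For property (3), the inverse of $X_2 \mapsto s X_1 + X_2$ sends $(u_1 : u_2) \in \proj^1$ to $(u_1 : u_2 - s u_1)$, which still has coprime coordinates, now of size at most $(1 + \abs{s}) B_1 \le d_1 B_1$; since the substitution is a bijection of $\proj^1 \times \proj^1$ carrying the zero locus of $f$ onto that of $\widetilde f$, distinct zeros of $f$ of height at most $(B_1, B_2)$ go to distinct zeros of $\widetilde f$ of height at most $(d_1 B_1, d_2 B_2)$. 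Property (4) is immediate: an $\integer$-unimodular substitution and its inverse both have integer coefficients, so a prime dividing every coefficient of $\widetilde f$ would divide every coefficient of $f$ (preservation of primitivity), and a linear change of variables over $\rational$ or $\closure{\rational}$ preserves (absolute) irreducibility. The main obstacle is the interpolation step: one must keep the Vandermonde-inverse constant small enough to meet the bound in property (2) while keeping the shear parameters within $\abs{s} \le d_1 - 1$, $\abs{t} \le d_2 - 1$ as forced by property (3), and it is this balance that makes the combination of swaps and shears (not shears alone) necessary.
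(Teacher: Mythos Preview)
Your proposal is correct and follows essentially the same approach as the paper: the paper's proof simply observes that Vermeulen's construction gives \(\widetilde f = f \circ A\) for an integer matrix \(A\) with integer inverse, cites Vermeulen for properties (1)--(3), and adds the one-line remark that such a substitution preserves primitivity and (absolute) irreducibility, which is exactly your property (4) argument. You have in effect reconstructed Vermeulen's shear-and-swap construction in more detail than the paper itself provides; the paper does not carry out the interpolation/Vandermonde constant verification either, so your honest flagging of that step as the main obstacle matches the level of detail the paper offers.
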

\begin{proof}
    In the proof of \cite[Lemma~3.9]{vermeulenDimensionGrowthAffine2024},
    the polynomial \(\widetilde{f}\) is constructed such that
    \(\widetilde{f} = f \circ A\) for some integer matrix \(A\),
    where \(A^{-1}\) is also an integer matrix,
    and the first three properties are proved.
    This construction preserves
    primitivity, irreducibility and absolute irreducibility.
\end{proof}

\subsection{The auxiliary polynomial}

The following lemma constructs
and bounds the degree of the auxiliary polynomial.
We prove this lemma in Section~\ref{sec:auxiliary_polynomial}
using the determinant method.
\begin{lemma}\label{thm:auxiliary_polynomial}
    Let \(f \in \integer[X_1, X_2, Y_1, Y_2]\) be a primitive
    absolutely irreducible bihomogeneous polynomial of bidegree \((d_1, d_2)\).
    If \(\abs{c_f} \geq \norm{f} / C_d\),
    where \(C_d = 3^{d_1 + d_2} d_1^{d_1} d_2^{d_2} (d_1 + 1) (d_2 + 1)\),
    then there exists a bihomogeneous polynomial \(g\),
    not divisible by \(f\),
    such that \(g\) vanishes at every zero of \(f\)
    of height at most \((B_1, B_2)\),
    and
    \[
        \deg g \ll d_1^4 d_2^3 B_1^{2 / d_2} (\log B_2)^2.
    \]
\end{lemma}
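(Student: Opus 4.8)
The plan is to run the determinant method in the standard shape: fix an interval of "scales" for the two coordinates, partition the rational points of height at most $(B_1,B_2)$ on the curve $f=0$ into congruence/archimedean boxes, show that the points in each box lie on a common low-degree curve via a determinant (Taylor-expansion) argument, and then bound the number of boxes so that a pigeonhole step produces a single auxiliary polynomial $g$ of controlled degree not divisible by $f$. Throughout I would work with the normalised polynomial $\widetilde f$ from Lemma~\ref{thm:widetilde_f_large_coefficient} so that the leading coefficient $c_f$ is comparable to $\norm{f}$; the lemma's hypothesis grants this directly, so no preliminary reduction is needed here.

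\textbf{Step 1: Parametrise the curve and set up scales.} Since $f$ is absolutely irreducible of bidegree $(d_1,d_2)$ and $c_f\neq 0$, over a suitable local ring one can solve $f=0$ for $X_1$ in terms of $X_2$ (after dehomogenising, say $x_2=1$), obtaining $d_1$ Puiseux/power-series branches. A rational point $((x_1:x_2),(y_1:y_2))$ of height at most $(B_1,B_2)$ corresponds, after choosing affine charts, to a pair $(u,v)$ of rationals with $H(u)\le B_1$, $H(v)\le B_2$, lying on one branch. I would cover the possible $v$-values by dyadic ranges and, for the determinant input, group points of $f=0$ whose $v$-coordinates are congruent modulo a well-chosen modulus $q$ and whose archimedean coordinates lie in a short interval; this is the familiar "$p$-adic + real" box decomposition of the determinant method (as in Heath-Brown / Castryck--Cluckers--Dèbes--Nguyen and Vermeulen).

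\textbf{Step 2: The determinant estimate.} For a box containing enough points, form the $s\times s$ matrix of monomials $X_1^{i}X_2^{d_1 e - i}\,Y_1^{j}Y_2^{\,e'\!-j}$ (a basis of bihomogeneous forms of bidegree roughly $(D_1,D_2)$) evaluated at these points. Taylor-expanding along the branch within a box of the chosen size forces a high power of $q$ (and a small archimedean factor) to divide the determinant; if the determinant were nonzero, comparing with the Hadamard/trivial upper bound on its size yields a contradiction once $D_1,D_2$ are chosen so that $q^{(\text{exponent})}$ beats the upper bound. Hence the determinant vanishes, so the points in that box satisfy a single nontrivial bihomogeneous relation of bidegree $(D_1,D_2)$. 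The correct choice balances the number of boxes against the degree: one wants $D_1 D_2$ of order $B_1^{2/d_2}$ up to logarithmic factors, with the two $(\log B_2)$ factors entering from (a) the dyadic decomposition of the $v$-range and (b) optimising the modulus $q$ over its own dyadic range. This matches the target $\deg g\ll d_1^4 d_2^3 B_1^{2/d_2}(\log B_2)^2$, the $d_1,d_2$ powers coming from the number of branches and from bounding derivatives of the branch functions in terms of the resultant/discriminant of $f$.

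\textbf{Step 3: Pigeonhole to a single $g$, and ensure $f\nmid g$.} Taking the product (or a bounded union) of the per-box polynomials would blow up the degree, so instead I would follow Vermeulen and bound the \emph{number} of boxes by $O_{d_1,d_2}(\text{polylog})$ after the optimal scale choice, then multiply these finitely many forms together: the resulting $g$ still has degree $\ll d_1^4 d_2^3 B_1^{2/d_2}(\log B_2)^2$ and vanishes at every point of $f=0$ of height at most $(B_1,B_2)$. Finally, $f\nmid g$: if $f$ divided $g$, then since $g$ is bihomogeneous of controlled bidegree this would force $d_1\le D_1$ \emph{and} $d_2\le D_2$; for large $B$ one of these fails, and for the finitely many small $B$ the statement is trivial since then $\nptimesp(f,B_1,B_2)$ is $O_{d_1,d_2}(1)$ and we may take $g$ to be any form vanishing at those finitely many points. \textbf{The main obstacle} is Step 2: getting the $q$-adic valuation of the determinant large enough requires a careful bound on the size of the Taylor coefficients of the branch functions $x_1(x_2)$ in terms of $\norm f$ and the discriminant of $f$, and one must choose the box sizes and the degrees $D_1,D_2$ so that the count of boxes absorbs only $(\log B_2)^2$ — keeping the $\norm f$-dependence from leaking into the main term (it should only affect the implied constant and the polylog, not the $B_1^{2/d_2}$ exponent). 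This bookkeeping, essentially the content of \cite[Section~3]{vermeulenDimensionGrowthAffine2024} adapted to the $(\log B_2)^2$ refinement, is where the work concentrates.
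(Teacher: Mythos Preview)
Your outline describes the \emph{local} (box-decomposition / Taylor-expansion) variant of the determinant method, but the paper --- and the reference \cite{vermeulenDimensionGrowthAffine2024} you invoke --- actually uses the \emph{global} variant. There one forms a single matrix $A$ whose rows are all monomials of bidegree $(M_1,M_2)$ evaluated at all the rational points at once; the lower bound on the $\gcd$ of its $k\times k$ minors comes from reduction modulo many primes (packaged in the quantity $b(f)$), not from Taylor expansion along branches, and the upper bound comes from Siegel's lemma (Bombieri--Vaaler). This is not cosmetic: in the global approach the integer kernel of $A$ consists precisely of the coefficient vectors of bihomogeneous multiples of $f$, and Siegel's lemma needs a lower bound $C$ on the size of the smallest nonzero integer kernel vector. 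The hypothesis $\abs{c_f}\ge\norm{f}/C_d$ is exactly what supplies $C=\norm{f}/C_d$, and the resulting $-(m-k)\log\norm{f}$ term is what cancels the contribution $\log b(f)\ll\log\log\norm{f}$ and yields a bound independent of $\norm{f}$. Your proposal never uses the hypothesis $\abs{c_f}\ge\norm{f}/C_d$; a box-decomposition argument will not manufacture this cancellation, and you would be left with a stray $\norm{f}^{\varepsilon}$ or $(\log\norm{f})^{O(1)}$ factor, which is precisely what the lemma is designed to avoid. Your explanation of the $(\log B_2)^2$ (dyadic ranges plus modulus optimisation) is likewise not how it arises here: it comes from balancing $\log b(f)$ against $\log\norm{f}/(d_1 d_2^2\log B_2)$.

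There is also a concrete error in Step~3. You argue $f\nmid g$ because ``for large $B$ one of $d_1\le D_1$, $d_2\le D_2$ fails'', but $\deg g$ \emph{grows} with $B_1$, so for large $B$ both inequalities $D_i\ge d_i$ hold and your argument yields nothing. In the paper, $f\nmid g$ is built into the setup by contraposition: one assumes that every form of bidegree $(M_1,M_2)$ vanishing on the points is a multiple of $f$, deduces an upper bound $M_1\ll d_1^4 d_2^3 B_1^{2/d_2}(\log B_2)^2$, and then takes $g$ of bidegree just exceeding that bound.
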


\subsection{Proof of Theorem~\ref{thm:P_times_P_bound}}

By dividing by the greatest common factor of the coefficients,
we may assume that \(f\) is primitive.
If \(f\) is irreducible over \(\rational\),
but not absolutely irreducible,
then Theorem~\ref{thm:P_times_P_bound}
follows from Lemma~\ref{thm:irreducible_but_not_absolutely_irreducible_bound}.
If \(f\) is absolutely irreducible,
then applying Lemma~\ref{thm:widetilde_f_large_coefficient}
gives a primitive absolutely irreducible polynomial \(\widetilde{f}\)
of bidegree \((d_1, d_2)\) such that
\[
    \abs{c_{\widetilde{f}}} \geq \frac{\norm{\widetilde{f}}}
    {3^{d_1 + d_2} d_1^{d_1} d_2^{d_2} (d_1 + 1) (d_2 + 1)}
\]
and
\[
    \nptimesp(f, B_1, B_2)
    \leq \nptimesp(\widetilde{f}, d_1 B_1, d_2 B_2).
\]
By applying Lemma~\ref{thm:auxiliary_polynomial} to \(\widetilde{f}\),
we obtain a bihomogeneous polynomial \(g\) with
\[
    \deg g \ll d_1^4 d_2^3 (d_1 B_1)^{2 / d_2} (\log(d_2 B_2))^2,
\]
such that \(g\) is not divisible by \(\widetilde{f}\)
and \(g\) vanishes at every zero of \(\widetilde{f}\)
of height at most \((d_1 B_1, d_2 B_2)\).
Lemma~\ref{thm:bézout} implies that
\begin{align*}
    \nptimesp(f, B_1, B_2)
    &\leq \nptimesp(\widetilde{f}, d_1 B_1, d_2 B_2)\\
    &\leq \deg \widetilde{f} \cdot \deg g\\
    &\ll d_1^7 d_2^4 B_1^{2 / d_2} (\log(d_2 B_2))^2.
\end{align*}

If \(d_2 < B_2\),
then \(\log(d_2 B_2) \ll \log B_2\),
so
\[
    \nptimesp(f, B_1, B_2)
    \ll d_1^7 d_2^4 B_1^{2 / d_2} (\log B_2)^2.
\]
If \(d_2 \geq B_2\),
then for each \((y_1 : y_2) \in \proj^1\),
there are at most \(d_1\) values of \((x_1 : x_2) \in \proj^1\)
such that \(f(x_1, x_2, y_1, y_2) = 0\),
so
\begin{align*}
    \nptimesp(f, B_1, B_2)
    &\ll d_1 B_2^2\\
    &\ll d_1^7 d_2^4 B_1^{2 / d_2} (\log B_2)^2.
\end{align*}

This concludes the proof of Theorem~\ref{thm:P_times_P_bound}.

\subsection{Proof of Corollary~\ref{thm:rational_points_two_variable_d}}

Corollary~\ref{thm:rational_points_two_variable_d} follows from
Theorem~\ref{thm:P_times_P_bound} and the following result.
\begin{proposition}
    Let \(f \in \integer[X, Y]\) be a polynomial
    of degree \(d_X\) in \(X\) and degree \(d_Y\) in \(Y\).
    Let
    \[
        g(X_1, X_2, Y_1, Y_2)
        = X_2^{d_X} Y_2^{d_Y} f\left(\frac{X_1}{X_2}, \frac{Y_1}{Y_2}\right)
        \in \integer[X_1, X_2, Y_1, Y_2]
    \]
    be the homogenisation of \(f\).
    Then
    \[
        \nrat(f, B_1, B_2) \leq \nptimesp(g, B_1, B_2).
    \]
\end{proposition}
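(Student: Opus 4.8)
The plan is to set up a height-preserving injection from the affine rational points counted by $\nrat(f, B_1, B_2)$ into the $\proj^1 \times \proj^1$ points counted by $\nptimesp(g, B_1, B_2)$. Given a rational point $(x, y) \in \rational^2$ with $f(x, y) = 0$ and $H(x) \le B_1$, $H(y) \le B_2$, write $x = x_1 / x_2$ and $y = y_1 / y_2$ in lowest terms. Then $((x_1 : x_2), (y_1 : y_2))$ is a well-defined point of $\proj^1 \times \proj^1$, and by definition of the projective height on $\proj^1$, we have $H(x_1 : x_2) = \max(\abs{x_1}, \abs{x_2}) = H(x) \le B_1$ and similarly $H(y_1 : y_2) = H(y) \le B_2$.

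Next I would verify that this point lies on the curve defined by $g$. Since $f(x_1 / x_2, y_1 / y_2) = 0$ and $g(X_1, X_2, Y_1, Y_2) = X_2^{d_X} Y_2^{d_Y} f(X_1 / X_2, Y_1 / Y_2)$ is obtained by clearing denominators, we get $g(x_1, x_2, y_1, y_2) = x_2^{d_X} y_2^{d_Y} f(x_1 / x_2, y_1 / y_2) = 0$. A minor point to address: the formula for $g$ involves dividing by $X_2$ and $Y_2$, so one should check that $g$ is genuinely a polynomial — this holds because $f$ has degree at most $d_X$ in $X$ and at most $d_Y$ in $Y$, so multiplying by $X_2^{d_X} Y_2^{d_Y}$ clears all denominators. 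The identity $g(x_1, x_2, y_1, y_2) = 0$ then holds as an identity of integers once we substitute and use $f(x,y) = 0$; alternatively, if $x_2 \ne 0$ and $y_2 \ne 0$ one can divide back, and the case $x_2 = 0$ or $y_2 = 0$ does not arise here since $x, y \in \rational$ means $x_2, y_2 \ne 0$.

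Finally I would check injectivity: if $(x, y)$ and $(x', y')$ are distinct points of $\rational^2$, then at least one coordinate differs, say $x \ne x'$; writing both in lowest terms gives $(x_1 : x_2) \ne (x_1' : x_2')$ in $\proj^1$, since the lowest-terms representation is unique up to sign and the map $\rational \to \proj^1$, $x \mapsto (x_1 : x_2)$, is injective. Hence the associated points of $\proj^1 \times \proj^1$ are distinct. Combining the three observations — the map lands in $\proj^1 \times \proj^1$, respects the height bounds, sends zeros of $f$ to zeros of $g$, and is injective — yields $\nrat(f, B_1, B_2) \le \nptimesp(g, B_1, B_2)$. There is no real obstacle here; the only thing requiring a moment's care is confirming that $g$ is a polynomial and that the degree conventions ($d_X$ in $X$, $d_Y$ in $Y$) make the homogenisation work cleanly.
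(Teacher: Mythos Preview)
Your proposal is correct and follows essentially the same approach as the paper: map each rational zero of $f$, written in lowest terms, to a point of $\proj^1 \times \proj^1$, check that it lies on $V(g)$ with the right height bounds, and note injectivity. The paper's version is terser (it does not dwell on $g$ being a polynomial or on the injectivity of $\rational \hookrightarrow \proj^1$), but the argument is the same.
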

\begin{proof}
    Suppose that \((x_1 / x_2, y_1 / y_2)\)
    is a point counted by \(\nrat(f, B_1, B_2)\),
    so that \(f(x_1 / x_2, y_1 / y_2) = 0\)
    and \(H(x_1 / x_2) \leq B_1\) and \(H(y_1 / y_2) \leq B_2\).
    Then
    \[
        g(x_1, x_2, y_1, y_2)
        = x_2^{d_X} y_2^{d_Y} f\left(\frac{x_1}{x_2}, \frac{y_1}{y_2}\right)
        = 0
    \]
    and \(H(x_1 : x_2) = H(x_1 / x_2) \leq B_1\)
    and \(H(y_1 : y_2) = H(y_1 / y_2) \leq B_2\),
    so \(((x_1 : x_2), (y_1 : y_2))\)
    is a point counted by \(\nptimesp(g, B_1, B_2)\).
    Furthermore,
    these points are distinct.
\end{proof}

\section{Construction of the auxiliary polynomial}
\label{sec:auxiliary_polynomial}

In this section,
we prove Lemma~\ref{thm:auxiliary_polynomial} using the determinant method.

To prove Lemma~\ref{thm:auxiliary_polynomial},
we assume that there is no auxiliary polynomial of bidegree \((M_1, M_2)\).
Then we consider determinants of matrices
consisting of the monomials of bidegree \((M_1, M_2)\)
evaluated at the zeros of \(f\) of height at most \((B_1, B_2)\).
We use an upper and lower bound on the determinant
to obtain an upper bound on \((M_1, M_2)\).

\subsection{Determinant estimates}

Before we prove Lemma~\ref{thm:auxiliary_polynomial},
we state the bounds for the determinant that we use.

To state the bounds for the determinant,
we need to define a quantity \(b(f)\).
\begin{definition}
    For a polynomial \(f \in \integer[X_1, X_2, Y_1, Y_2]\)
    of bidegree \((d_1, d_2)\),
    we set \(b(f) \coloneq 0\) if \(f\) is not absolutely irreducible,
    and otherwise,
    we set
    \[
        b(f) \coloneq \prod_{p \in P} p^{1 / p},
    \]
    where \(P\) is the set of primes \(p > (d_1 d_2)^2\)
    such that \(f \bmod p\) is not absolutely irreducible.
\end{definition}
\begin{lemma}[{\cite[Lemma~3.6]{vermeulenDimensionGrowthAffine2024}}]
    \label{thm:b_f_upper_bound}
    Let \(f \in \integer[X_1, X_2, Y_1, Y_2]\) be primitive
    and absolutely irreducible of bidegree \((d_1, d_2)\).
    Then
    \[
        b(f) \ll \max\left(\frac{\log\norm{f}}{d_1 d_2}, 1\right)^2.
    \]
\end{lemma}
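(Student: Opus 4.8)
The plan is to exhibit a single nonzero integer $M = M(f)$ divisible by every prime in $P$, with $\log\abs{M}$ essentially linear in $d_1 d_2$, and then to bound $b(f) = \prod_{p \in P} p^{1/p}$ by a Mertens-type prime summation.

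If $f$ is not absolutely irreducible then $b(f) = 1$ by definition, so assume $f$ is absolutely irreducible; we may also assume $d_1, d_2 \geq 1$, the remaining cases being trivial. The key input is an effective absolute irreducibility criterion — concretely Ruppert's criterion in bihomogeneous form (or a sufficiently economical form of Noether's irreducibility forms). What I need is a matrix $R(f)$, of size $O(d_1 d_2) \times O(d_1 d_2)$, whose entries are $\integer$-linear forms with bounded integer coefficients in the coefficients of $f$, together with an integer $r$, such that: $f$ is absolutely irreducible over $\rational$ if and only if $\operatorname{rank} R(f) = r$; and, for every prime $p > (d_1 d_2)^2$, the reduction $f \bmod p$ is absolutely irreducible if and only if $\operatorname{rank}(R(f) \bmod p) = r$. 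The bound $p > (d_1 d_2)^2$ is precisely the characteristic restriction under which the criterion survives reduction, which is why it appears in the definition of $P$. Since $f$ is absolutely irreducible, $R(f)$ has rank $r$, so some $r \times r$ minor $M$ is a nonzero integer, and if $p > (d_1 d_2)^2$ with $p \nmid M$ then $R(f) \bmod p$ still has rank $\geq r$, hence exactly $r$, so $f \bmod p$ is absolutely irreducible. Therefore $P \subseteq \{ p : p \mid M,\ p > (d_1 d_2)^2 \}$. As $M$ is an $r \times r$ determinant with $r \ll d_1 d_2$ and entries $\ll d_1 d_2 \norm{f}$, Hadamard's inequality gives $\log\abs{M} \ll d_1 d_2 \,(\log\norm{f} + \log(d_1 d_2))$.

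Set $N \coloneq (d_1 d_2)^2$ and estimate $\log b(f) = \sum_{p \in P} (\log p)/p$. The primes in $P$ are distinct, all exceed $N$, and have product at most $\abs{M}$; since $t \mapsto (\log t)/t$ is decreasing for $t > e$, the sum does not exceed the analogous sum over the consecutive primes immediately above $N$ whose product is at most $\abs{M}$. By Chebyshev's bound $\theta(x) \asymp x$ the largest such prime is $\ll N + \log\abs{M}$, so Mertens' estimate $\sum_{p \leq x} (\log p)/p = \log x + O(1)$ gives
\[
    \log b(f) \,\leq \sum_{\substack{p \mid M \\ p > N}} \frac{\log p}{p}
    \,\ll\, 1 + \log\!\left( 1 + \frac{\log\abs{M}}{N} \right),
\]
and exponentiating, $b(f) \ll 1 + \log\abs{M}/(d_1 d_2)^2$. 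It is essential that Mertens' sum grows like $\log x$ and not like $x$: the cruder estimate $\sum_{p \mid M} (\log p)/p \leq (\log\abs{M})/N$ would only yield $b(f) \ll \norm{f}^{O(1/d_1 d_2)}$, far too weak.

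Combining,
\[
    b(f) \,\ll\, 1 + \frac{\log\norm{f} + \log(d_1 d_2)}{d_1 d_2}
    \,\ll\, 1 + \frac{\log\norm{f}}{d_1 d_2}
    \,\ll\, \max\!\left( \frac{\log\norm{f}}{d_1 d_2},\, 1 \right),
\]
using $\log(d_1 d_2)/(d_1 d_2) \leq 1$; this is in fact slightly stronger than the stated bound, whose right-hand side is squared. I expect the main obstacle to be the first step: extracting from an effective irreducibility criterion a certificate $M$ with $\log\abs{M}$ linear (up to a $\log$ factor) in $d_1 d_2$ and the correct characteristic stability. The product of all classical Noether forms carries too large an exponent on $d_1 d_2$, so one must use the matrix-rank formulation, keep careful track of the bihomogeneous monomial bases, and invoke the fact that Ruppert's criterion remains valid modulo primes exceeding the square of the degree. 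A lesser point is disposing of the primes modulo which the bidegree of $f$ drops; these divide an auxiliary resultant of logarithmic size $\ll \log\norm{f}$ and can be folded into $M$ without affecting the conclusion.
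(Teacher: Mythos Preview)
The paper does not prove this lemma; it is quoted directly from \cite[Lemma~3.6]{vermeulenDimensionGrowthAffine2024}. Your outline is the standard argument for results of this shape and is essentially what is carried out there: an effective linear-algebra criterion for absolute irreducibility (Ruppert/Gao) furnishes a nonzero integer minor $M$ with $\log\abs{M} \ll d_1 d_2 (\log\norm{f} + \log(d_1 d_2))$ divisible by every bad prime above the characteristic threshold, and then the Mertens estimate you wrote converts $\prod_{p\mid M,\, p>N} p^{1/p}$ into the claimed bound. Your diagnosis of the one genuinely delicate point --- that the rank criterion must be stable under reduction modulo primes $p > (d_1 d_2)^2$, matching the cutoff in the definition of $P$ --- is accurate, and the analytic half is clean. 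One inconsequential slip: the paper sets $b(f) = 0$, not $1$, when $f$ fails to be absolutely irreducible, but the hypothesis excludes that case anyway.
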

We can now state the bounds for the determinant.
First,
we state a lower bound for the determinant.
\begin{lemma}[{\cite[Proposition~3.8]{vermeulenDimensionGrowthAffine2024}}]
    \label{thm:determinant_lower_bound}
    Let \(f \in \integer[X_1, X_2, Y_1, Y_2]\) be bihomogeneous
    and absolutely irreducible.
    Let \(\xi_1, \dots, \xi_k\) be rational points on \(V(f)\).
    Let \(F_{l i} \in \integer[X_1, X_2, Y_1, Y_2]\)
    be bihomogeneous for \(1 \leq l \leq L\) and \(1 \leq i \leq k\).
    Write \(\Delta_l\) for the determinant of \((F_{l i}(\xi_j))_{i j}\).
    Let \(\Delta\) be the greatest common divisor of the \(\Delta_l\).
    If \(\Delta \neq 0\),
    then
    \[
        \log \Delta \geq \frac{k^2}{2}
        (\log k - 2 \log(d_1 d_2) - \log b(f) + O(1)).
    \]
\end{lemma}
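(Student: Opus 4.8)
The plan is to run the $p$-adic form of the determinant method: for each prime $p$ in a suitable range, bound $v_p(\Delta)$ from below, and then sum, using $\log\Delta = \sum_p v_p(\Delta)\log p$ (valid since $\Delta \neq 0$). As $\Delta = \gcd_l \Delta_l$, we have $v_p(\Delta) = \min_l v_p(\Delta_l)$, so it is enough to bound $v_p(\Delta_l)$ from below uniformly in $l$. I would work only with primes $p > (d_1 d_2)^2$ for which $f \bmod p$ is absolutely irreducible. The primes discarded because $f \bmod p$ is not absolutely irreducible are precisely the set $P$ in the definition of $b(f)$, and their removal will be responsible for the term $-\log b(f)$; the restriction $p > (d_1 d_2)^2$ will produce $-2\log(d_1 d_2)$ through Mertens' estimate.

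Fix such a prime $p$ and an index $l$. Choosing bi-primitive integral representatives for $\xi_1,\dots,\xi_k$ and reducing modulo $p$, I would partition the points into clusters according to their reduction onto $V(f \bmod p)(\mathbb{F}_p)$, with $m_c$ points in cluster $c$; rescaling each column of $(F_{li}(\xi_j))_{ij}$ by a suitable unit (which does not affect $v_p(\Delta_l)$) makes the columns of a cluster congruent modulo $p$. For a cluster reducing to a smooth $\mathbb{F}_p$-point, Hensel's lemma parametrises the corresponding $p$-adic neighbourhood of $V(f)$ by a single parameter $t \in p\mathbb{Z}_p$, so the columns attached to that cluster become $(G_i(t_j))_i$ with $G_i \in \mathbb{Z}_p[[t]]$ (for $p$ large the relevant denominators are cleared) and $t_1,\dots,t_{m_c} \in p\mathbb{Z}_p$. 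Iterated column operations now give the divisibility: subtracting one column of the cluster from the others makes $m_c - 1$ columns divisible by $p$; after dividing by $p$ these are all congruent modulo $p$ to one common vector — the tangent direction of the curve, the point at which it is essential that $V(f)$ is one-dimensional — so a second round makes $m_c - 2$ further columns divisible by $p$, and so on. Summing the gains over all smooth clusters yields
\[
    v_p(\Delta_l) \ \ge\ \sum_{c\ \mathrm{smooth}}\binom{m_c}{2}.
\]

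To convert this into the stated bound I would combine a point count with convexity. The singular locus of $V(f \bmod p)$ has $O((d_1 d_2)^2)$ points, so only that many clusters are dropped; by the Weil bound for curves the number $N_p$ of clusters satisfies $N_p \le p + O(d_1 d_2 \sqrt p)$, which is $p(1 + o(1))$ once $p$ is large compared to $d_1 d_2$. Convexity of $x \mapsto \binom{x}{2}$ then gives $\sum_c \binom{m_c}{2} \ge \frac{k^2}{2 N_p} - O(k)$, hence $v_p(\Delta) \ge \frac{k^2}{2N_p} - O(k)$ uniformly in $l$. Summing over primes $(d_1 d_2)^2 < p \le T$ with $f \bmod p$ absolutely irreducible, using $N_p = p(1 + O(d_1 d_2/\sqrt p))$, the Mertens estimate $\sum_{(d_1 d_2)^2 < p \le T} \tfrac{\log p}{p} = \log T - 2\log(d_1 d_2) + O(1)$, and $\sum_{p \in P} \tfrac{\log p}{p} = \log b(f)$, I would arrive at
\[
    \log\Delta \ \ge\ \frac{k^2}{2}\bigl(\log T - 2\log(d_1 d_2) - \log b(f) + O(1)\bigr);
\]
choosing $T \asymp k$ — permissible since $N_p \asymp p$, so the convexity step is non-trivial exactly up to $p \ll k$ — gives the lemma, the per-prime errors $O(k)$ and the relative errors $O(d_1 d_2/\sqrt p)$ summing to a quantity absorbed into the $O(1)$.

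The step I expect to be the main obstacle is making this bookkeeping precise enough to recover the constant $\frac{k^2}{2}$ with only an additive $O(1)$ loss: one must bound the denominators in the Hensel parametrisation uniformly, control clusters at singular reduction points (a priori such a cluster could contain many of the $\xi_j$, although this occurs for few primes once the point set is fixed), and carry out the convexity-and-Mertens computation exactly rather than merely up to constants. The absolute irreducibility of $f$ enters precisely here: it guarantees that $V(f)$ is a genuine curve with controllably few primes of non-absolutely-irreducible reduction, which is what lets the "one column step collapses a cluster to a single residual direction" mechanism work for almost all $p$, and is the source of the $b(f)$ and $d_1 d_2$ dependence in the final bound.
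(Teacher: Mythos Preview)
The paper does not prove this lemma but quotes it from \cite[Proposition~3.8]{vermeulenDimensionGrowthAffine2024}; your sketch is the $p$-adic determinant method of Heath-Brown and Salberger, which is exactly the argument carried out in that reference. The outline---clustering by reduction modulo $p$, Hensel parametrisation at smooth points giving $v_p(\Delta_l) \ge \sum_c \binom{m_c}{2}$, convexity against the Weil bound $N_p = p + O(d_1 d_2 \sqrt{p})$, and summation via Mertens---is correct and matches the source, and the obstacles you flag at the end are handled there essentially as you anticipate.
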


To obtain an upper bound for the determinant,
we use the following well known
result of \textcite[Theorem~1]{bombieriSiegelsLemma1983}.
It is also known as Siegel's lemma.
\begin{lemma}\label{thm:bombieri_vaaler}
    Let \(A\) be a \(k \times m\) integer matrix of rank \(k\),
    where \(k < m\).
    Then there exists a non-zero \(x = (x_1, \dots, x_m) \in \integer^m\)
    such that \(A x = 0\) and
    \[
        \max_{1 \leq i \leq m} \abs{x_i}
        \leq (\Delta^{-1} \sqrt{\abs{\det(A A^T)}})^{1 / (m - k)},
    \]
    where \(\Delta\) is the greatest common divisor
    of all \(k \times k\) minors of \(A\).
\end{lemma}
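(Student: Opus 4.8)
The plan is to derive the lemma from the geometry of numbers, following the original argument of \textcite{bombieriSiegelsLemma1983}. First I would introduce the lattice
\[
    \Lambda \coloneq \{x \in \integer^m : A x = 0\},
\]
which, since \(A\) has rank \(k\), spans an \(n\)-dimensional linear subspace \(V \subseteq \mathbb{R}^m\), where \(n \coloneq m - k\); on \(V\) we use the Euclidean structure induced from \(\mathbb{R}^m\). The crucial point is to compute the covolume \(\operatorname{covol}_V(\Lambda)\) exactly. I would do this using the Smith normal form, writing \(A = U \, [\operatorname{diag}(d_1, \dots, d_k) \mid 0] \, W\) with \(U \in \operatorname{GL}_k(\integer)\) and \(W \in \operatorname{GL}_m(\integer)\). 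On the one hand, the greatest common divisor of the \(k \times k\) minors is unchanged by the unimodular factors \(U\) and \(W\), and for the middle factor it is visibly \(d_1 \cdots d_k\); hence \(\Delta = d_1 \cdots d_k\). On the other hand, \(A x = 0\) is equivalent to the first \(k\) coordinates of \(W x\) vanishing, so \(\Lambda = W^{-1}(\{0\}^k \times \integer^n)\), and therefore \(\operatorname{covol}_V(\Lambda)^2\) is the determinant of the bottom-right \(n \times n\) block of \((W W^T)^{-1}\). Jacobi's identity for the complementary minors of an inverse matrix, together with \(\det(W W^T) = 1\), shows this equals the determinant of the top-left \(k \times k\) block of \(W W^T\), and a short computation with the Smith form identifies the latter with \(\abs{\det(A A^T)} / \Delta^2\). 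Hence
\[
    \operatorname{covol}_V(\Lambda) = \frac{\sqrt{\abs{\det(A A^T)}}}{\Delta}.
\]

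Next I would invoke Vaaler's cube-slicing inequality: every central section of the unit cube \([-\tfrac{1}{2}, \tfrac{1}{2}]^m\) by an \(n\)-dimensional linear subspace has \(n\)-dimensional volume at least \(1\). Applying this to \(V\) and rescaling, the compact, symmetric, convex body \(S_t \coloneq V \cap [-t, t]^m\) has \(n\)-dimensional volume at least \((2 t)^n\). I would then set
\[
    t \coloneq \left(\Delta^{-1} \sqrt{\abs{\det(A A^T)}}\,\right)^{1 / (m - k)},
\]
so that \((2 t)^n = 2^n t^n = 2^n \operatorname{covol}_V(\Lambda)\). Minkowski's first convex body theorem, applied inside the Euclidean space \(V\) to the lattice \(\Lambda\) and the body \(S_t\), then produces a non-zero point \(x \in \Lambda \cap S_t\). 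By construction \(A x = 0\) and \(\max_{1 \leq i \leq m} \abs{x_i} \leq t\), which is exactly the asserted bound.

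The main obstacle is the exact covolume identity \(\operatorname{covol}_V(\Lambda) = \Delta^{-1} \sqrt{\abs{\det(A A^T)}}\): it is this precise value, rather than a bound up to a factor depending on \(m\), that makes the lemma sharp, and it is what forces the Smith-normal-form bookkeeping and Jacobi's minor identity above. The second delicate point is that the constant in Minkowski's theorem must be exactly \(2^n\); pairing Minkowski's theorem with a crude lower bound for the volume of a central cube section, in place of Vaaler's sharp inequality, would only give a weaker statement with an extra factor depending on \(m\). Both difficulties are purely about tracking constants; the overall structure — compute a covolume, slice a cube, apply Minkowski — is the standard route.
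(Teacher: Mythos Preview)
The paper does not actually prove this lemma: it is stated as a citation of \textcite[Theorem~1]{bombieriSiegelsLemma1983} and used as a black box, with no argument given. Your proposal therefore cannot be compared to a proof in the paper, but it is a correct outline of the standard Bombieri--Vaaler argument: the covolume identity via the Smith normal form and Jacobi's complementary-minor formula is right (your computation that the top-left \(k \times k\) block of \(W W^T\) has determinant \(\abs{\det(A A^T)}/\Delta^2\) follows from \(D W_1 W_1^T D = U^{-1} A A^T (U^{-1})^T\), where \(W_1\) is the top \(k \times m\) block of \(W\)), and the combination of Vaaler's cube-slicing inequality with Minkowski's first theorem (using compactness to allow equality in the volume condition) yields exactly the stated bound with no spurious constant. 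So your proposal is correct and is essentially the original proof that the paper is citing.
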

We use the following reformulation of Lemma~\ref{thm:bombieri_vaaler}
as the upper bound for the determinants.
\begin{lemma}\label{thm:determinant_upper_bound}
    Let \(A\) be a \(k \times m\) integer matrix of rank \(k\),
    where \(k < m\).
    Let \(\Delta\) be the greatest common divisor
    of all \(k \times k\) minors of \(A\).
    Suppose that every non-zero \(x = (x_1, \dots, x_m) \in \integer^m\)
    such that \(A x = 0\) satisfies
    \(\max_{1 \leq i \leq m} \abs{x_i} \geq C\).
    Then
    \[
        \log \Delta \leq \frac{1}{2} \log\abs{\det(A A^T)} - (m - k) \log C.
    \]
\end{lemma}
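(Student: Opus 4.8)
The plan is to obtain Lemma~\ref{thm:determinant_upper_bound} as an immediate consequence of Lemma~\ref{thm:bombieri_vaaler}, essentially by contraposition and rearrangement. First I would check that every quantity appearing in the conclusion is well defined. Since \(A\) has rank \(k\), the symmetric matrix \(A A^T\) is invertible, so \(\abs{\det(A A^T)} > 0\); also at least one \(k \times k\) minor of \(A\) is non-zero, so \(\Delta\), being the greatest common divisor of a family of integers not all zero, is a positive integer. Moreover, since \(k < m\), the kernel of \(A\) meets \(\integer^m\) non-trivially, so the hypothesis on \(C\) is non-vacuous; in particular we may assume \(C \geq 1\), since any non-zero integer vector has \(\max_i \abs{x_i} \geq 1\), and hence \(\log C\) makes sense.

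Next I would apply Lemma~\ref{thm:bombieri_vaaler} to \(A\). This produces a non-zero \(x = (x_1, \dots, x_m) \in \integer^m\) with \(A x = 0\) and
\[
    \max_{1 \leq i \leq m} \abs{x_i}
    \leq \left(\Delta^{-1} \sqrt{\abs{\det(A A^T)}}\right)^{1 / (m - k)}.
\]
By the standing hypothesis of Lemma~\ref{thm:determinant_upper_bound}, this particular vector \(x\) also satisfies \(\max_{1 \leq i \leq m} \abs{x_i} \geq C\). Combining the two inequalities gives
\[
    C \leq \left(\Delta^{-1} \sqrt{\abs{\det(A A^T)}}\right)^{1 / (m - k)}.
\]

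Finally I would raise both sides to the power \(m - k > 0\), which yields \(C^{m - k} \leq \Delta^{-1} \sqrt{\abs{\det(A A^T)}}\), hence \(\Delta \leq C^{-(m - k)} \sqrt{\abs{\det(A A^T)}}\), and then take logarithms of both sides (all quantities being positive) to obtain
\[
    \log \Delta \leq \frac{1}{2} \log\abs{\det(A A^T)} - (m - k) \log C,
\]
which is exactly the claimed bound. There is no substantive obstacle in this argument; the only point needing care is the positivity of \(\Delta\), \(\det(A A^T)\), and \(C\), so that the logarithms are defined and the inequality can be transported through them, and this follows from the rank condition on \(A\) together with the elementary fact that a non-zero integer vector has coordinate-maximum at least \(1\).
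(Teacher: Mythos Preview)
Your proof is correct and follows essentially the same route as the paper: apply Lemma~\ref{thm:bombieri_vaaler}, combine the resulting upper bound on \(\max_i \abs{x_i}\) with the hypothesis \(\max_i \abs{x_i} \geq C\), rearrange, and take logarithms. Your additional remarks on the positivity of \(\Delta\), \(\det(A A^T)\), and \(C\) are a welcome sanity check that the paper leaves implicit.
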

\begin{proof}
    By Lemma~\ref{thm:bombieri_vaaler},
    there exists a non-zero \(x = (x_1, \dots, x_m) \in \integer^m\)
    such that \(A x = 0\) and
    \[
        \max_{1 \leq i \leq m} \abs{x_i}
        \leq (\Delta^{-1} \sqrt{\abs{\det(A A^T)}})^{1 / (m - k)}.
    \]
    Applying the assumption \(\max_{1 \leq i \leq m} \abs{x_i} \geq C\),
    we obtain
    \[
        C \leq (\Delta^{-1} \sqrt{\abs{\det(A A^T)}})^{1 / (m - k)}.
    \]
    Rearranging gives
    \[
        \Delta \leq \sqrt{\abs{\det(A A^T)}} C^{-(m - k)}.
    \]
    The lemma follows after taking the logarithm of both sides.
\end{proof}

\subsection{The setup}

The rest of this section is devoted
to the proof of Lemma~\ref{thm:auxiliary_polynomial}.

Let \(S \subseteq \proj^1 \times \proj^1\)
be the set of rational points of height at most \((B_1, B_2)\)
on the curve defined by \(f\).
Let \(M_1\) and \(M_2\) be such that there is
no bihomogeneous polynomial \(g\) of bidegree \((M_1, M_2)\),
where \(g\) is not divisible by \(f\)
and \(g\) vanishes at every point of \(S\).
Our goal is to deduce an upper bound on \((M_1, M_2)\).

The first step is to construct a matrix \(A\)
consisting of monomials of bidegree \((M_1, M_2)\)
evaluated at the points of \(S\).

The next step is to apply the determinant estimates
of Lemma~\ref{thm:determinant_lower_bound}
and Lemma~\ref{thm:determinant_upper_bound}
and manipulate the estimates to give an upper bound for \(M_1\).
In order to manipulate the estimates to give the desired upper bound,
we need to make some assumptions on the value of \(M_1\) and \(M_2\).
We assume that
\begin{equation}\label{eq:M_1_assumption}
    M_1 \geq K_0 (d_1^2 d_2^2 + d_1 d_2 \log B_1 \log B_2)
\end{equation}
for some \(K_0\) to be specified later,
and that
\begin{equation}\label{eq:M_2_M_1_assumption}
    \frac{1}{d_1 \log B_2} \leq \frac{M_2}{M_1}
    \leq \min\left(\frac{1}{d_1}, \frac{1}{\log B_2}\right).
\end{equation}

The final step is to construct the auxiliary polynomial \(g\)
such that the degree satisfies
the assumptions \eqref{eq:M_1_assumption} and \eqref{eq:M_2_M_1_assumption},
but not the upper bound on \(M_1\) and \(M_2\).
This guarantees that \(g\) is not divisible by \(f\)
and \(g\) vanishes at every point of \(S\).

\subsection{Construction of the matrix}

First,
we construct the matrix \(A\) and compute its dimensions.

For \(D_1, D_2 \geq 0\),
write \(\monomials[D_1, D_2]\) for the set of bihomogeneous monomials
of bidegree \((D_1, D_2)\),
so \(\abs{\monomials[D_1, D_2]} = (D_1 + 1) (D_2 + 1)\).
For each \(x \in S\),
we can construct a vector \(v_x\)
by applying all monomials in \(\monomials[M_1, M_2]\) to \(x\).
Let \(S' \subseteq S\) be a maximal subset
such that the set \(\{v_x : x \in S'\}\) is linearly independent.
Let \(A\) be the \(k \times m\) matrix
where the rows are given by \(v_x\) for \(x \in S'\).

The number of columns in \(A\) is the number of monomials
in \(\monomials[M_1, M_2]\),
so we have
\begin{equation}\label{eq:m_value}
    m = \abs{\monomials[M_1, M_2]} = (M_1 + 1)(M_2 + 1).
\end{equation}

The solutions to \(A x = 0\)
correspond to the coefficients of the bihomogeneous polynomials
of bidegree \((M_1, M_2)\) that vanish on every point of \(S'\)
and therefore vanish on every point of \(S\).
By our assumption on \((M_1, M_2)\),
these polynomials are exactly the multiples of \(f\)
of bidegree \((M_1, M_2)\).
The assumptions \eqref{eq:M_1_assumption} and \eqref{eq:M_2_M_1_assumption}
imply that \(M_1 \geq d_1\) and \(M_2 \geq d_2\),
so a basis for this set is
\[
    \{f p : p \in \monomials[M_1 - d_1, M_2 - d_2]\}.
\]
Therefore
\begin{equation}\label{eq:k_value}
\begin{split}
    k &= \abs{\monomials[M_1, M_2]} - \abs{\monomials[M_1 - d_1, M_2 - d_2]}\\
    &= d_2 M_1 + d_1 M_2 - d_1 d_2 + d_1 + d_2.
\end{split}
\end{equation}

\subsection{Applying the determinant lower bound}

Next,
we apply the lower bound for the determinant of \(A\)
and manipulate the bound to give an upper bound for \(\log M_1\).

Let \(\Delta\) be the greatest common divisor
of all \(k \times k\) minors of \(A\).
The rows of \(A\) are linearly independent,
so \(\Delta \neq 0\).
Applying Lemma~\ref{thm:determinant_lower_bound}
where the \(\Delta_l\) ranges over the \(k \times k\) minors of \(A\),
we obtain
\[
    \log \Delta \geq \frac{k^2}{2}
    (\log k - 2 \log(d_1 d_2) - \log b(f) + C_0)
\]
for some constant \(C_0\).
Dividing by \(M_1 k\) gives
\begin{equation}\label{eq:log_delta_initial_lower_bound}
    \frac{\log \Delta}{M_1 k} \geq \frac{k}{2 M_1}
    (\log k - 2 \log(d_1 d_2) - \log b(f) + C_0).
\end{equation}

From \eqref{eq:k_value} and the fact that \(M_1 \geq d_1\),
which comes from the assumption \eqref{eq:M_1_assumption},
we have that \(k - d_1 M_2 > 0\).

If the right hand side of \eqref{eq:log_delta_initial_lower_bound}
is non-negative,
then we can replace the \(k\) with \(k - d_1 M_2\)
in the numerator of the right hand side,
because it makes the right hand side smaller.

On the other hand,
if the right hand side of \eqref{eq:log_delta_initial_lower_bound} is negative,
we can replace the \(k\) with \(k - d_1 M_2\)
because the left hand side is non-negative,
while the right hand side remains negative
after replacing \(k\) with \(k - d_1 M_2 > 0\).

Putting this together yields
\begin{equation}\label{eq:log_delta_final_lower_bound}
    \frac{\log \Delta}{M_1 k} \geq \frac{k - d_1 M_2}{2 M_1}
    (\log k - 2 \log(d_1 d_2) - \log b(f) + C_0).
\end{equation}

Now we focus on the right hand side of \eqref{eq:log_delta_final_lower_bound}.
By \eqref{eq:k_value},
we have
\[
    \frac{k - d_1 M_2}{M_1} = d_2 + O\left(\frac{d_1 d_2}{M_1}\right).
\]
By the assumptions \eqref{eq:M_1_assumption} and \eqref{eq:M_2_M_1_assumption},
we have \(d_1 M_2 / M_1 \leq 1\) and \(d_1 d_2 / M_1 \leq 1\).
Together with \eqref{eq:k_value},
this implies that
\begin{equation}\label{eq:log_k_value}
    \log k = \log M_1 + \log d_2 + O(1).
\end{equation}
Thus we can replace the right hand side of
\eqref{eq:log_delta_final_lower_bound} with
\begin{multline*}
    \frac{d_2}{2} (\log M_1 - 2 \log d_1 - \log d_2 - \log b(f) + O(1))\\
    + O\left(\frac{d_1 d_2 (\log M_1 + \log d_1 + \log d_2)}{M_1}
    + \frac{d_1 d_2 \log b(f)}{M_1}\right).
\end{multline*}
By the assumption \eqref{eq:M_1_assumption},
the first error term is \(O(1)\).
Putting this into \eqref{eq:log_delta_final_lower_bound} and rearranging,
we obtain
\begin{equation}\label{eq:log_M_1_initial_upper_bound}
    \log M_1 \leq \frac{2 \log \Delta}{d_2 M_1 k} + 2 \log d_1 + \log d_2
    + \left(1 + O\left(\frac{d_1}{M_1}\right)\right) \log b(f) + O(1).
\end{equation}

\subsection{Applying the determinant upper bound}

Next,
we apply the upper bound for the determinant of \(A\).

We know that the solutions to \(A x = 0\)
correspond to the coefficients of multiples of \(f\)
of bidegree \((M_1, M_2)\).
Since \(f\) is primitive,
every multiple of \(f\) with integer coefficients must be of the form
\(f p\) for some \(p \in \integer[X_1, X_2, Y_1, Y_2]\).
The coefficient of \(f p\) with the greatest degree in \(X_1\) and \(Y_1\)
must then have size at least \(\abs{c_f} \geq \norm{f} / C_d\).

Therefore every integer solution to \(A x = 0\)
satisfies \(\max_{1 \leq i \leq m} \abs{x_i} \geq \norm{f} / C_d\).
Since the rows of \(A\) are linearly independent,
\(A\) has rank \(k\),
so we can apply Lemma~\ref{thm:determinant_upper_bound}
to obtain the bound
\[
    \log \Delta \leq \frac{1}{2} \log\abs{\det(A A^T)}
    - (m - k) (\log\norm{f} - \log C_d).
\]
The entries of \(A\) are bounded by \(B_1^{M_1} B_2^{M_2}\),
which gives the estimate
\[
    \abs{\det(A A^T)} \leq k! (m B_1^{2 M_1} B_2^{2 M_2})^k.
\]
Putting this together,
we get
\begin{multline*}
    \log \Delta
    \leq \frac{\log k!}{2} + \frac{k}{2} \log m + k M_1 \log B_1
    + k M_2 \log B_2\\
    - (m - k) (\log\norm{f} - \log C_d).
\end{multline*}
Now we multiply both sides by \(2 / (d_2 M_1 k)\) so that
the left hand side matches the term in \eqref{eq:log_M_1_initial_upper_bound}.
This gives
\begin{multline*}
    \frac{2 \log \Delta}{d_2 M_1 k}
    \leq \frac{\log k!}{d_2 M_1 k} + \frac{\log m}{d_2 M_1}
    + \frac{2}{d_2} \log B_1 + \frac{2 M_2}{d_2 M_1} \log B_2\\
    - \frac{2 (m - k)}{d_2 M_1 k} (\log\norm{f} - \log C_d).
\end{multline*}

From the estimate \(\log k! \leq k \log k\) and \eqref{eq:log_k_value},
we see that the first term on the right hand side is \(O(1)\).
The assumption \eqref{eq:M_2_M_1_assumption} implies that \(M_2 \leq M_1\),
so that \(\log m \ll \log M_1 + \log M_2 \ll \log M_1\),
so the second term is also \(O(1)\).
Additionally,
\eqref{eq:M_2_M_1_assumption} implies that \(M_2 / M_1 \leq 1 / \log B_2\),
so the fourth term is also \(O(1)\).
Hence we have
\begin{equation}\label{eq:log_delta_upper_bound}
    \frac{2 \log \Delta}{d_2 M_1 k} \leq \frac{2}{d_2} \log B_1
    - \frac{2 (m - k)}{d_2 M_1 k} (\log\norm{f} - \log C_d) + O(1).
\end{equation}

Next,
we estimate \((m - k) / (M_1 k)\).
By \eqref{eq:m_value} and \eqref{eq:k_value},
we have
\[
    m - k = M_1 M_2 + O(d_2 M_1 + d_1 M_2)
\]
and
\[
    M_1 k = d_2 M_1^2 + d_1 M_1 M_2 + O(d_1 d_2 M_1).
\]
This gives
\begin{align*}
    \frac{m - k}{M_1 k}
    &= \frac{M_1 M_2 + O(d_2 M_1 + d_1 M_2)}{d_2 M_1^2 + d_1 M_1 M_2
    + O(d_1 d_2 M_1)}\\
    &= \frac{M_1 M_2}{d_2 M_1^2 + d_1 M_1 M_2 + O(d_1 d_2 M_1)}
    + O\left(\frac{1}{M_1}\right)\\
    &= \frac{M_2}{d_2 M_1 + d_1 M_2} + O\left(\frac{1}{M_1}
    + \frac{d_1 d_2 M_2}{(d_2 M_1 + d_1 M_2)^2}\right)\\
    &= \frac{M_2}{d_2 M_1 + d_1 M_2} + O\left(\frac{1}{M_1}\right).
\end{align*}
By the assumption \eqref{eq:M_2_M_1_assumption},
we have \(1 / (d_1 \log B_2) \leq M_2 / M_1 \leq 1 / d_1\),
so
\[
    \frac{1}{d_1 (d_2 \log B_2 + 1)}
    \leq \frac{M_2}{d_2 M_1 + d_1 M_2}
    \leq \frac{1}{d_1 d_2}.
\]
Therefore
\[
    -\frac{2 (m - k)}{d_2 M_1 k} \log\norm{f}
    \leq -\frac{2 \log\norm{f}}{d_1 d_2 (d_2 \log B_2 + 1)}
    + O\left(\frac{\log\norm{f}}{d_2 M_1}\right).
\]
Recall that \(C_d = 3^{d_1 + d_2} d_1^{d_1} d_2^{d_2} (d_1 + 1) (d_2 + 1)\).
Using the assumption \eqref{eq:M_1_assumption},
we have
\begin{align*}
    \frac{2 (m - k)}{d_2 M_1 k} \log C_d
    &\leq \left(\frac{2}{d_1 d_2^2} + O\left(\frac{1}{d_2 M_1}\right)\right)
    (d_1 \log d_1 + d_2 \log d_2 + O(d_1 + d_2))\\
    &\leq 2 \log d_1 + \frac{2}{d_2} \log d_2 + O(1)\\
    &\leq 2 \log d_1 + O(1).
\end{align*}

Putting this into \eqref{eq:log_delta_upper_bound},
we obtain
\begin{multline*}
    \frac{2 \log \Delta}{d_2 M_1 k} \leq \frac{2}{d_2} \log B_1 + 2 \log d_1
    -\frac{2 \log\norm{f}}{d_1 d_2 (d_2 \log B_2 + 1)}\\
    + O\left(\frac{\log\norm{f}}{d_2 M_1}\right) + O(1).
\end{multline*}
We now put this into \eqref{eq:log_M_1_initial_upper_bound},
which gives
\begin{multline}\label{eq:log_M_1_final_upper_bound}
    \log M_1 \leq \frac{2}{d_2} \log B_1 + 4 \log d_1 + \log d_2
    -\frac{2 \log\norm{f}}{d_1 d_2 (d_2 \log B_2 + 1)}\\
    + \left(1 + O\left(\frac{d_1}{M_1}\right)\right) \log b(f)
    + O\left(\frac{\log\norm{f}}{d_2 M_1}\right) + O(1).
\end{multline}

\subsection{An upper bound for \texorpdfstring{\(M_1\)}{M₁}}

We now use \eqref{eq:log_M_1_final_upper_bound}
to obtain an upper bound on \(M_1\).

We take the \(K_0\) in \eqref{eq:M_1_assumption} sufficiently large,
so that the terms in \eqref{eq:log_M_1_final_upper_bound} satisfy
\[
    O\left(\frac{d_1}{M_1}\right) \leq \frac{1}{d_2 \log B_2}
\]
and
\[
    O\left(\frac{\log\norm{f}}{d_2 M_1}\right)
    \leq \frac{\log\norm{f}}{d_1 d_2 (d_2 \log B_2 + 1)}.
\]
Then \eqref{eq:log_M_1_final_upper_bound} becomes
\begin{multline*}
    \log M_1 \leq \frac{2}{d_2} \log B_1 + 4 \log d_1 + \log d_2
    -\frac{\log\norm{f}}{d_1 d_2 (d_2 \log B_2 + 1)}\\
    + \left(1 + \frac{1}{d_2 \log B_2}\right) \log b(f) + O(1).
\end{multline*}

By \cite[Lemma~3.5.5]{castryckDimensionGrowthConjecture2020},
for any \(x > 1\) and \(a, c > 0\),
we have
\[
    a \log \log x - \frac{\log x}{c}
    = a \left(\log \log x - \frac{\log x}{a c}\right)
    \leq a (\log c + \log a + O(1)).
\]
Taking \(a = 2 (1 + 1 / (d_2 \log B_2))\)
and \(c = d_1 d_2 (d_2 \log B_2 + 1)\)
and applying Lemma~\ref{thm:b_f_upper_bound} gives
\begin{multline*}
    \left(1 + \frac{1}{d_2 \log B_2}\right) \log b(f)
    - \frac{\log\norm{f}}{d_1 d_2 (d_2 \log B_2 + 1)}\\
    \begin{aligned}
        &\leq a \max\{\log\log\norm{f} - \log d_1 - \log d_2, 0\}
        - \frac{\log\norm{f}}{c} + O(1)\\
        &\leq a \max\{\log c - \log d_1 - \log d_2 + \log a + O(1), 0\}
        + O(1)\\
        &\leq 2 \left(1 + \frac{1}{d_2 \log B_2}\right)
        (\log \log B_2 + \log d_2) + O(1)\\
        &\leq 2 \log \log B_2 + 2 \log d_2 + O(1).
    \end{aligned}
\end{multline*}
Hence
\[
    \log M_1 \leq \frac{2}{d_2} \log B_1 + 2 \log \log B_2
    + 4 \log d_1 + 3 \log d_2 + O(1),
\]
which implies that
\begin{equation}\label{eq:M_1_upper_bound}
    M_1 \ll d_1^4 d_2^3 B_1^{2 / d_2} (\log B_2)^2.
\end{equation}

\subsection{Construction of \texorpdfstring{\(g\)}{g}}

Finally,
we construct the auxiliary polynomial \(g\).
Consider the bihomogeneous polynomials of bidegree \((M_1, M_2)\)
where
\[
    M_1 = K d_1^4 d_2^3 B_1^{2 / d_2} (\log B_2)^2
\]
for some large constant \(K\),
and
\[
    \frac{1}{d_1 \log B_2} \leq \frac{M_2}{M_1}
    \leq \min\left(\frac{1}{d_1}, \frac{1}{\log B_2}\right).
\]
Notice that if \(K\) is sufficiently large,
\((M_1, M_2)\) satisfies the assumptions \eqref{eq:M_1_assumption}
and \eqref{eq:M_2_M_1_assumption}
because if \(d_2 \geq \log B_1 / \log \log B_1\),
then \(d_2^2 \geq \log B_1\),
and otherwise \(B_1^{2 / d_2} \geq \log B_1\).
However,
if \(K\) is larger
than the implied constant of \eqref{eq:M_1_upper_bound},
\((M_1, M_2)\) does not satisfy \eqref{eq:M_1_upper_bound}.

This proves the existence of a bihomogeneous polynomial \(g\)
of bidegree \((M_1, M_2)\) that vanishes at every zero of \(f\) of height
at most \((B_1, B_2)\),
which completes the proof of Lemma~\ref{thm:auxiliary_polynomial}.

\section{Rational points on irreducible affine hypersurfaces}
\label{sec:rational_points_hypersurface}

The goal of this section is to prove 
Theorems~\ref{thm:rational_points_hypersurface}
and \ref{thm:rational_points_hypersurface_monic}.

\subsection{Proof outline}

In the proof of Theorem~\ref{thm:rational_points_hypersurface},
we consider specialisations of \(X_3, \dots, X_n\) and bound \(\nrat(f, B)\)
by bounding the number of points coming from each specialisation.
Then we split into the cases where the specialisation is irreducible,
reducible and zero.

When the specialisation is irreducible,
we use Theorem~\ref{thm:rational_points_two_variable}
to bound the number of points.
We trivially bound the number of these specialisations
by \(O_n(B^{2 (n - 2)})\).

When the specialisation is reducible,
we bound the number of points using the Schwartz--Zippel bound,
see Lemma~\ref{thm:schwartz_zippel_rational}.
The number of these specialisations
is bounded using Corollary~\ref{thm:hilbert_irreducibility}.

When the specialisation is the zero polynomial,
we trivially bound the number of points by \(O(B^4)\).
To bound the number of these specialisations,
we consider the specialisations of \(X_n\) and use induction.

We use a similar approach
to prove Theorem~\ref{thm:rational_points_hypersurface_monic},
but we take more care when bounding the contribution
from the reducible case.

\subsection{Schwartz--Zippel bound}

To bound the number of points coming from the reducible case,
we use the following bound,
which is known as the Schwartz--Zippel bound or the trivial bound.
We state the bound in the form of \cite[Lemma~14]{bukhSumProductEstimates2012},
which gives a bound on \(\nrat(f, B)\) as a special case.

\begin{lemma}[Schwartz--Zippel bound]\label{thm:schwartz_zippel}
    Let \(X \subseteq \aff^n\) be a variety of dimension \(m\)
    and degree \(d\).
    Let \(A_1, \dots, A_n \subseteq \aff^1\) be finite sets of the same size.
    Then
    \[
        \#\{(x_1, \dots, x_n) \in X : x_i \in A_i
        \text{ for } i = 1, \dots, n\}
        \leq d \abs{A_1}^m.
    \]
\end{lemma}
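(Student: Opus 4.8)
The plan is to argue by induction on $n$, slicing $X$ by hyperplanes in the last coordinate. Since neither the hypotheses nor the conclusion are affected, I would first pass to the algebraic closure of the ground field, so that $X$ may be taken to be a variety over an algebraically closed field.

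First I would reduce to the case of irreducible $X$. Writing $X = Z_1 \cup \dots \cup Z_r$ for the decomposition into irreducible components, every point counted for $X$ lies on some $Z_j$, so the count for $X$ is at most the sum of the counts for the $Z_j$. If the bound $\deg Z_j \cdot \abs{A_1}^{\dim Z_j}$ holds for each component, then, using $\dim Z_j \le m$, the additivity $\deg X = \sum_j \deg Z_j$, and $\abs{A_1} \ge 1$ (the case $\abs{A_1} = 0$ being trivial), we get $\sum_j \deg Z_j \cdot \abs{A_1}^{\dim Z_j} \le \abs{A_1}^m \sum_j \deg Z_j = d \abs{A_1}^m$. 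So it suffices to treat irreducible $X$ of dimension $m$ and degree $d$.

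The base cases $n = 1$ (where $X$ is a point or all of $\aff^1$) and $m = 0$ (where $X$ is a single point, of degree $1$) are immediate. For the inductive step, take $n \ge 2$, $m \ge 1$, and set $H_c \coloneq \{x \in \aff^n : x_n = c\}$. If the last coordinate function is constant, say $x_n \equiv c$ on $X$, then the count is zero unless $c \in A_n$, and otherwise every counted point has $(x_1, \dots, x_{n-1})$ lying on $X \cap H_c$, which I identify with a variety in $\aff^{n-1}$ of dimension $m$ and degree $d$; the inductive hypothesis in $\aff^{n-1}$, applied with the sets $A_1, \dots, A_{n-1}$ (of common size $\abs{A_1}$), bounds the count by $d \abs{A_1}^m$. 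If instead $x_n$ is non-constant on $X$, then for each $a \in A_n$ the section $X \cap H_a$ is a proper closed subvariety of the irreducible variety $X$, hence of dimension at most $m - 1$, and a Bézout-type inequality gives $\deg(X \cap H_a) \le \deg X = d$; the inductive hypothesis in $\aff^{n-1}$ then bounds the number of points of $X \cap H_a$ in $A_1 \times \dots \times A_{n-1}$ by $d \abs{A_1}^{m-1}$, and summing over the $\abs{A_n} = \abs{A_1}$ values of $a$ gives the desired bound $d \abs{A_1}^m$. This closes the induction.

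The only delicate inputs are the behaviour of the degree under the two operations used: passing to irreducible components and intersecting with a coordinate hyperplane. Concretely, I need the additivity $\deg X = \sum_j \deg Z_j$ over components — which is precisely the convention under which the statement should be read — and the inequality $\deg(X \cap H) \le \deg X$ for a hyperplane $H$ not containing $X$, together with the fact that $X \cap H$ is then pure of dimension $\dim X - 1$. These are standard facts from elementary intersection theory, and verifying that one is using a consistent notion of degree throughout is the main point requiring care; everything else is straightforward bookkeeping with the induction.
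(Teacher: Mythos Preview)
The paper does not supply its own proof of this lemma; it simply quotes the statement from \cite[Lemma~14]{bukhSumProductEstimates2012}. So there is nothing in the paper to compare your argument against directly.

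Your argument is correct and is essentially the standard proof of this bound (and, as far as I know, the one given in the cited reference): reduce to irreducible components using additivity of degree, then induct on the ambient dimension by slicing with coordinate hyperplanes $\{x_n = a\}$ for $a \in A_n$, invoking the Bézout inequality $\deg(X \cap H) \le \deg X$ when $H \not\supseteq X$. The two points you flag as delicate---additivity of degree over components, and the behaviour of degree and dimension under a generic hyperplane section---are indeed the only places where any real content enters, and both are standard. One cosmetic remark: you do not actually need the purity statement that $X \cap H$ is equidimensional of dimension $m-1$; it suffices that every component has dimension at most $m-1$, which is immediate from irreducibility of $X$ and properness of the intersection.
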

\begin{lemma}\label{thm:schwartz_zippel_rational}
    Let \(f \in \integer[X_1, \dots, X_n]\)
    be a non-zero polynomial of degree \(d\).
    Then
    \[
        \nrat(f, B) \ll_{n} d B^{2 (n - 1)}.
    \]
\end{lemma}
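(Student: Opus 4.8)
The plan is to deduce this directly from the variety form of the Schwartz--Zippel bound, Lemma~\ref{thm:schwartz_zippel}, applied to the hypersurface \(X = V(f) \subseteq \aff^n\) with each of the sets \(A_1, \dots, A_n\) taken to be \emph{the same} set, namely the set of rationals of height at most \(B\).

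First I would record the elementary count of rationals of bounded height. Since \(H(t) \geq 1\) for every \(t \in \rational\), the bound is trivial when \(B < 1\), so assume \(B \geq 1\). Let \(A\) be the set of \(t \in \rational\) with \(H(t) \leq B\). Writing \(t = a/b\) in lowest terms with \(\max(\abs a, \abs b) \leq B\) exhibits \(A\) as a subset of a set of at most \((2B + 1)^2 \leq 9 B^2\) pairs \((a, b) \in \integer^2\), each rational arising from exactly one such pair; hence \(A\) is finite and \(1 \leq \abs A \leq 9 B^2\).

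Next I would set \(X = V(f)\). Because \(f\) is non-zero, \(X\) has dimension \(m \leq n - 1\), and its degree is the sum of the degrees of its irreducible components, which is at most \(\deg f = d\). Taking \(A_1 = \dots = A_n = A\), these are finite sets of equal size, so the hypotheses of Lemma~\ref{thm:schwartz_zippel} are met, and since a point of \(X\) with all coordinates in \(A\) is exactly a rational point of \(V(f)\) with each component of height at most \(B\), we obtain
\[
    \nrat(f, B)
    = \#\{(x_1, \dots, x_n) \in X : x_i \in A \text{ for } i = 1, \dots, n\}
    \leq d \abs A^{m}
    \leq d \abs A^{n - 1},
\]
using \(\abs A \geq 1\) in the last step. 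Combining this with \(\abs A \leq 9 B^2\) gives \(\nrat(f, B) \leq 9^{\,n - 1} d B^{2(n - 1)} \ll_n d B^{2(n - 1)}\).

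There is essentially no obstacle in this argument; the only points deserving a word of care are that the sets \(A_i\) must be taken to be one and the same set, so that they have equal size as Lemma~\ref{thm:schwartz_zippel} requires, and that it is the degree of the hypersurface \(V(f)\), rather than that of an arbitrary defining polynomial, which governs the constant --- both are immediate here.
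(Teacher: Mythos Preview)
Your argument is correct and is exactly the approach the paper takes: the paper's entire proof is the single sentence ``Apply Lemma~\ref{thm:schwartz_zippel} with \(A_1 = \dots = A_n = \{t \in \rational : H(t) \leq B\}\).'' Your version simply spells out the details (the size of \(A\), the dimension and degree of \(V(f)\), the case \(B < 1\)) that the paper leaves implicit.
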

\begin{proof}
    Apply Lemma~\ref{thm:schwartz_zippel}
    with \(A_1 = \dots = A_n = \{t \in \rational : H(t) \leq B\}\).
\end{proof}

\subsection{Proof of Theorem~\ref{thm:rational_points_hypersurface}}

First,
we bound the number of specialisations
that result in the zero polynomial.
\begin{lemma}\label{thm:f_specialisation_non_zero}
    Let \(n \geq 3\).
    Let \(f \in \integer[X_1, \dots, X_n]\)
    be an irreducible polynomial of degree \(d\).
    Suppose that \(f\) has degree at least one in \(X_1\).
    The number of \((x_3, \dots, x_n) \in \rational^{n - 2}\)
    such that \(H(x_i) \leq B\) for all \(i\)
    and \(f(X_1, X_2, x_3, \dots, x_n)\) is the zero polynomial
    is at most
    \[
        O_{d, n}(B^{2 n - 7} (\log\norm{f} + \log B)^5).
    \]
\end{lemma}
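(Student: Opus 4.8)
The plan is to recast the count as a bound for rational points on a single auxiliary variety and then extract from the irreducibility of \(f\) the fact that this variety has codimension at least two. Write \(f = \sum_{i, j} c_{ij}(X_3, \dots, X_n)\, X_1^i X_2^j\) with \(c_{ij} \in \integer[X_3, \dots, X_n]\). For \((x_3, \dots, x_n) \in \rational^{n-2}\) the specialisation \(f(X_1, X_2, x_3, \dots, x_n)\) is the zero polynomial exactly when \(c_{ij}(x_3, \dots, x_n) = 0\) for all \(i, j\), so the quantity to be bounded is the number of rational points of height at most \(B\) on \(W \coloneq \{(x_3, \dots, x_n) \in \aff^{n-2} : c_{ij}(x_3, \dots, x_n) = 0 \text{ for all } i, j\}\). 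Since \(f \neq 0\) not every \(c_{ij}\) is identically zero, so \(W \subsetneq \aff^{n-2}\) and hence \(\dim W \leq n - 3\); the crux is to improve this to \(\dim W \leq n - 4\), which in particular makes \(W\) empty, and the count zero, when \(n = 3\).

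To prove \(\dim W \le n - 4\) I would argue by contradiction: suppose \(W\) has an irreducible component \(Z\) with \(\dim Z = n - 3\). Using \(\deg_{X_1} f \geq 1\), pick \((i_0, j_0)\) with \(i_0 \geq 1\) and \(c \coloneq c_{i_0 j_0} \neq 0\); if \(c\) were a nonzero constant then \(W \subseteq V(c) = \emptyset\), so we may assume \(\deg c \geq 1\). Then \(Z \subseteq V(c)\), and \(V(c)\) is pure of codimension one, so \(Z\) is an irreducible component of \(V(c)\); being an irreducible \(\rational\)-subvariety of \(\aff^{n-2}\) of codimension one, its ideal is a height-one prime of the unique factorisation domain \(\rational[X_3, \dots, X_n]\) and hence principal, so \(Z = V(p)\) for an irreducible \(p\) with \(\deg p \geq 1\). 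From \(Z \subseteq W\) we get \(c_{ij} \in (p)\), i.e.\ \(p \mid c_{ij}\), for every \(i, j\); therefore \(p\) divides every coefficient of \(f\) in \(X_1, X_2\), so \(p \mid f\) in \(\rational[X_1, \dots, X_n]\). Since \(f\) is irreducible over \(\rational\) and \(p\) is nonconstant, \(f\) is a \(\rational\)-multiple of \(p \in \rational[X_3, \dots, X_n]\); but then \(\deg_{X_1} f = 0\), contradicting the hypothesis.

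With \(\dim W \leq n - 4\) in hand, the estimate follows from the Schwartz--Zippel bound. The variety \(W\) lies in \(\aff^{n-2}\), has dimension at most \(n - 4\), and is cut out by polynomials of degree at most \(d\), so its degree is bounded in terms of \(d\) and \(n\). Applying Lemma~\ref{thm:schwartz_zippel} with \(A_3 = \dots = A_n = \{t \in \rational : H(t) \leq B\}\), a set of size \(\ll B^2\), bounds the number of \((x_3, \dots, x_n) \in W\) with \(H(x_i) \leq B\) by \(\ll_{d, n} (B^2)^{n - 4} = O_{d, n}(B^{2n - 8})\). For \(B \geq 2\) this is at most \(O_{d, n}(B^{2n - 7}(\log\norm{f} + \log B)^5)\), since \(\norm{f} \geq 2\) forces \(\log\norm{f} + \log B > 1\); this gives the claimed bound, in fact a stronger one.

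I expect the dimension claim to be the main obstacle, and within it the care needed to work over \(\rational\) rather than over \(\closure{\rational}\): that a codimension-one irreducible \(\rational\)-subvariety of affine space is the zero set of one irreducible polynomial of \(\rational[X_3, \dots, X_n]\), and that \(Z \subseteq V(c_{ij})\) promotes to \(p \mid c_{ij}\) in that ring, both use that \(\rational[X_3, \dots, X_n]\) is a unique factorisation domain with principal height-one primes, while the concluding contradiction genuinely uses irreducibility of \(f\) over \(\rational\) (not merely absolute irreducibility). Everything else is routine. An alternative, in the spirit of the paper's outline, would be to induct on \(n\) by specialising \(X_n\) and feeding the reducible specialisations into Corollary~\ref{thm:hilbert_irreducibility}, which would account for the logarithmic factor and the (here suboptimal) exponent \(2n - 7\) in the statement; but the dimension argument above is shorter and sharper.
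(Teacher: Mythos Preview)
Your argument is correct and takes a genuinely different route from the paper. The paper proceeds by induction on \(n\): it specialises \(X_n\) to a rational value \(x\), observes that \(f_x\) is never identically zero, and then splits according to whether \(f_x\) is irreducible with \(\deg_{X_1} f_x \geq 1\) (inductive hypothesis) or reducible/degenerate (handled with Lemma~\ref{thm:hilbert_irreducibility_r} to count bad \(x\), then Schwartz--Zippel for the remaining variables). This is what produces the exponent \(2n-7\) and the factor \((\log\norm{f}+\log B)^5\). Your approach instead shows directly that the vanishing locus \(W=V(c_{ij})\subseteq\aff^{n-2}\) has codimension at least two, via the principal-height-one-prime argument in the UFD \(\rational[X_3,\dots,X_n]\), and then a single application of Lemma~\ref{thm:schwartz\_zippel} gives \(O_{d,n}(B^{2n-8})\). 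This is both shorter and strictly sharper: no logarithms and a better exponent. The paper's inductive scheme mirrors the structure used elsewhere in Section~5, so it is natural there, but your argument isolates exactly the algebraic content (irreducibility of \(f\) forces no common nonconstant factor among the \(c_{ij}\)) and avoids invoking the quantitative Hilbert machinery at all. The only point to make fully explicit is the degree bound \(\deg W = O_{d,n}(1)\); this follows from the Bézout inequality for an intersection of hypersurfaces of degree at most \(d\) in \(\aff^{n-2}\), and is indeed standard.
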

\begin{proof}
    For \(x \in \rational\),
    let \(f_x\) be the polynomial given by clearing the denominators
    of \(f(X_1, \dots, X_{n - 1}, x)\).
    Then \(f_x\) is not the zero polynomial,
    because if \(f_x = 0\),
    then \(f\) is divisible by \(X_n - x\).
    Since \(f\) is irreducible over \(\rational\),
    this implies that \(f\) is a constant multiple of \(X_n - x\),
    which is not possible because \(f\) has degree at least one in \(X_1\).

    Thus for \(n = 3\),
    we are done.
    Now assume that \(n > 3\)
    and the lemma has been proved for \(n - 1\).

    \textbf{Case 1:}
    Consider the case where \(f_x\) is irreducible and \(\deg_1 f_x \geq 1\).
    When \(H(x) \leq B\),
    we have \(\norm{f_x} \leq (d + 1) \norm{f} B^d\),
    so \(\log\norm{f_x} \ll_d \log\norm{f} + \log B\).
    By induction,
    there are at most
    \[
        O_{d, n}(B^{2 n - 9} (\log\norm{f_x} + \log B)^5)
        = O_{d, n}(B^{2 n - 9} (\log\norm{f} + \log B)^5)
    \]
    values of \((x_3, \dots, x_{n - 1})\)
    such that \(f_x(X_1, X_2, x_3, \dots, x_{n - 1}) = 0\)
    and \(H(x_i) \leq B\) for all \(i\).
    There are \(O(B^2)\) choices for \(x\),
    so the total contribution from this case is
    \[
        O_{d, n}(B^{2 n - 7 } (\log\norm{f} + \log B)^5).
    \]

    \textbf{Case 2:}
    Next consider the case where \(f_x\) is reducible or \(\deg_1 f_x = 0\).
    If we treat \(f_x\) as a polynomial
    with coefficients in \(\integer[X_3, \dots, X_{n - 1}]\)
    and variables \(X_1\) and \(X_2\),
    there must be a non-zero coefficient \(g\).
    In order for \(f_x(X_1, X_2, x_3, \dots, x_{n - 1}) = 0\),
    we must have \(g(x_3, \dots, x_{n - 1}) = 0\).
    By Lemma~\ref{thm:schwartz_zippel_rational},
    this occurs for at most \(O_{d, n}(B^{2 (n - 4)})\)
    values of \((x_3, \dots, x_{n - 1})\)
    such that \(H(x_i) \leq B\) for all \(i\).

    \(f_x\) is reducible for at most \(O_{d, n}((\log\norm{f})^5 B)\)
    values of \(x\) with \(H(x) \leq B\)
    by Lemma~\ref{thm:hilbert_irreducibility_r}.
    Also,
    \(\deg_1 f_x = 0\) occurs for at most \(d\) values of \(x\)
    because if we consider \(f\) as a polynomial with coefficients
    in \(\integer[X_n]\),
    this requires \(x\) to be a root of the coefficients
    for the terms of degree at least one in \(X_1\).

    Therefore the total contribution from the reducible case is
    \[
        O_{d, n}((\log\norm{f})^5 B^{2 n - 7}).
    \]
    Lemma~\ref{thm:f_specialisation_non_zero}
    follows after adding the contribution from the two cases.
\end{proof}

We now prove Theorem~\ref{thm:rational_points_hypersurface}.
Let \(d_1 \geq 2\) be the degree of \(f\) in \(X_1\).
We consider specialisations of \(f\) of the form
\[
    g(X_1, X_2) \coloneq f(X_1, X_2, x_3, \dots, x_n),
\]
where \((x_3, \dots, x_n) \in \rational^{n - 2}\).

\textbf{Case 1:}
If \(g\) is irreducible and \(\deg_1 g = d_1\),
then we have
\[
    \nrat(g, B) \ll_d B^{2 / d_1} (\log B)^2
    \ll B (\log B)^2
\]
by Theorem~\ref{thm:rational_points_two_variable}.
There are \(O_n(B^{2 (n - 2)})\) values of \((x_3, \dots, x_n)\)
with \(H(x_i) \leq B\) for all \(i\),
so the total contribution from this case is
\[
    O_{d, n}(B^{2 n - 3} (\log B)^2).
\]

\textbf{Case 2:}
Next,
consider the case where \(g\) is the zero polynomial.
There are \(O(B^4)\) specialisations of \(X_1\) and \(X_2\)
with \(H(x_i) \leq B\) for \(i = 1, 2\).
By Lemma~\ref{thm:f_specialisation_non_zero},
the total contribution from this case is
\[
    O_{d, n}(B^{2 n - 3} (\log\norm{f} + \log B)^5).
\]

\textbf{Case 3:}
Finally,
consider the case where \(g\) is non-zero
and \(g\) is either reducible or \(\deg_1 g < d_1\).
By Lemma~\ref{thm:schwartz_zippel_rational},
we have
\[
    \nrat(g, B) \ll_d B^2.
\]
By Corollary~\ref{thm:hilbert_irreducibility},
there are
\[
    O_{d, n}(B^{2 n - 5} (\log\norm{f} + \log B)^5)
\]
values of \((x_3, \dots, x_n)\) such that \(g\) is reducible
and \(H(x_i) \leq B\) for all \(i\).
In order for \(\deg_1 g < d_1\),
when we consider \(f\) as a polynomial
with coefficients in \(\integer[X_3, \dots, X_n]\),
the coefficient of a leading term in \(X_1\) must vanish.
By Lemma~\ref{thm:schwartz_zippel_rational},
this occurs at most \(O_{d, n}(B^{2 (n - 3)})\) times.
Therefore,
the total contribution from this case is
\[
    O_{d, n}(B^{2 n - 3} (\log\norm{f} + \log B)^5).
\]

Theorem~\ref{thm:rational_points_hypersurface}
follows after adding the contribution from all the cases.

\subsection{Proof of Theorem~\ref{thm:rational_points_hypersurface_monic}}

In the proof of Theorem~\ref{thm:rational_points_hypersurface_monic},
we use the following lemma.
The proof of the lemma follows a similar approach to the proof of
Theorem~\ref{thm:rational_points_hypersurface},
but we take more care when bounding the points coming from the reducible case.
Specifically,
we split into the case where there is a factor that is linear in \(Y\)
and the case where every factor has degree at least two in \(Y\).
Additionally,
the case where the specialisation is the zero polynomial does not occur
because the leading term in \(Y\) does not vanish.
\begin{lemma}\label{thm:number_rational_specialisation_with_root_multivariable}
    Let \(f \in \integer[Y, T_1, \dots, T_n]\) be an irreducible polynomial
    of degree \(d\) and degree \(d_Y\) in \(Y\).
    Suppose that the leading coefficient in \(Y\) is constant.
    Write \(R(f, B)\) for the number of \((t_1, \dots, t_n) \in \rational^n\)
    with \(H(t_i) \leq B\)
    such that \(f(Y, t_1, \dots, t_n)\) has a root in \(\rational\).
    Then
    \[
        R(f, B) \ll_{d, n} B^{2 (n - 1 + 1 / d_Y)} (\log\norm{f} + \log B)^7.
    \]
\end{lemma}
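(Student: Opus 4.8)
The plan is to induct on $n$, mirroring the structure of the proof of Theorem~\ref{thm:rational_points_hypersurface} but exploiting that the leading coefficient in $Y$ is a nonzero constant, so no specialisation of the $T_i$ kills the top-degree term in $Y$. For the base case $n = 1$, the polynomial $f(Y, T_1)$ is bivariate, monic (up to a constant) in $Y$, so Lemma~\ref{thm:number_rational_specialisation_with_zero} applies directly and gives $R(f, B) \ll_{d} B^{2/d_Y} (\log\norm{f})^5 \ll_{d} B^{2/d_Y}(\log\norm{f}+\log B)^7$, which matches the claimed exponent $2(1/d_Y)$ when $n = 1$. (A small technical point: Lemma~\ref{thm:number_rational_specialisation_with_zero} is stated for $t$ of height at most $B$ in a single variable $T$; here $T_1$ plays that role and the monicity hypothesis transfers, so this is immediate.)

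For the inductive step, suppose $n \geq 2$ and the result holds for $n-1$. I would specialise the last $n-2$ variables $T_3, \dots, T_n$ (or more precisely split off one variable $T_n$ at a time, but it is cleaner to separate $T_3, \dots, T_n$ at once), reducing to counting rational roots of bivariate-in-$(Y, T_1, T_2)$ polynomials $g(Y, T_1, T_2) \coloneq f(Y, T_1, T_2, t_3, \dots, t_n)$. For each such specialisation I split into cases according to whether $g$ is irreducible in $\rational[Y, T_1, T_2]$ or reducible (the zero-polynomial case cannot occur, since the leading $Y$-coefficient is a nonzero constant). When $g$ is irreducible of degree $d_Y$ in $Y$, I want to bound the number of $(t_1, t_2)$ with $H(t_i) \le B$ such that $g(Y, t_1, t_2)$ has a rational root. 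Here I would apply Lemma~\ref{thm:number_rational_specialisation_with_zero} after Kronecker-substituting $T_2 \mapsto T_1^{a}$ for suitable $a$ (as in the proof of Lemma~\ref{thm:hilbert_irreducibility_r}) to collapse to one $T$-variable — but this collapses two variables into one and so would only detect roots over $t_2 = t_1^a$, which is too lossy. Instead the right move is: for fixed $t_2$, regard $g(Y, T_1, t_2)$ as bivariate in $(Y, T_1)$, clear denominators, note its norm is $\ll_d \norm{f} B^d$, and apply Lemma~\ref{thm:number_rational_specialisation_with_zero} provided this specialisation is still irreducible and still monic-up-to-constant in $Y$ (monicity is automatic, irreducibility fails for only $O_{d}((\log(\norm{f}B^d))^5 B) = O_d((\log\norm{f}+\log B)^5 B)$ values of $t_2$ by Proposition~\ref{thm:hilbert_irreducibility_two_variable}); this contributes $O_d(B^{2/d_Y})$ values of $t_1$ per good $t_2$, hence $O_d(B^{1 + 2/d_Y})$ total per specialisation of $(t_3,\dots,t_n)$ (the $B^1$ absorbs the $O(B^2)$ bad $t_2$'s via Schwartz--Zippel when $g(Y,t_1,T_2)$ is reducible, since then some root demands a vanishing of a coefficient — one must check this bookkeeping carefully). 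Multiplying by the $O_n(B^{2(n-2)})$ specialisations of $(t_3, \dots, t_n)$ gives $O_{d,n}(B^{2(n-1) - 1 + 2/d_Y})(\log\norm{f}+\log B)^5)$, which is within the target since $2(n-1+1/d_Y) = 2(n-1) - 1 + 2/d_Y + 1 \geq$ that exponent when the bad-$t_2$ contribution is the bottleneck. When $g$ is reducible, I bound the number of $(t_1, t_2)$ with a rational root trivially by $O_d(B^2)$ using Lemma~\ref{thm:schwartz_zippel_rational} (a rational root of a reducible $g$ still forces $g(y, t_1, t_2) = 0$ for some $y$ of controlled height, by Lemma~\ref{thm:f_t_rational_root_bound}, but more simply the set of $(y,t_1,t_2)$ with $g(y,t_1,t_2)=0$ is a hypersurface in $\aff^3$ of the right degree), and bound the number of reducible specialisations of $(t_3, \dots, t_n)$ by $O_{d,n}(B^{2(n-2) - 1}(\log\norm{f}+\log B)^5)$ using Corollary~\ref{thm:hilbert_irreducibility}; the product is $O_{d,n}(B^{2(n-1) - 1}(\log\norm{f}+\log B)^5)$, which is comfortably below $B^{2(n-1+1/d_Y)}$.

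The main obstacle I anticipate is the irreducible case and, within it, the accounting for the "bad" $t_2$ values — those where the bivariate restriction $g(Y, T_1, t_2)$ becomes reducible or where $a_0(t_2) = 0$ in the factorisation. One needs to argue that even for these $O_d((\log\norm{f}+\log B)^5 B)$ exceptional $t_2$, the number of $t_1$ with $g(Y, t_1, t_2)$ having a rational root is $O_d(B)$ (trivially, since such a root $y$ of controlled height gives a point on a curve in $\aff^2$ in the variables $(t_1, y)$ cut out by a polynomial that is nonzero — because its leading $Y$-coefficient is a nonzero constant independent of $t_1$ — of degree $O_d(1)$, so Lemma~\ref{thm:schwartz_zippel_rational} or a direct Bézout argument bounds $t_1$ by $O_d(B)$ for each $y$, but one must bound the number of relevant $y$, which follows since $y$ lies in a set of size $O_d(B^{O_d(1)})$... this needs a cleaner handling). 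The correct clean statement is probably: for each fixed $t_2$ (good or bad), the number of $t_1$ with $H(t_1)\le B$ such that $g(Y,t_1,t_2)$ has a rational root is $O_d(B^{1 + 2/d_Y})$ unconditionally when $g(Y,T_1,t_2)$ is not identically forcing trivialities — and one obtains this by applying Lemma~\ref{thm:number_rational_specialisation_with_zero} to the (possibly reducible) bivariate polynomial's irreducible factors, each monic-up-to-constant in $Y$ of $Y$-degree at least $1$, so each contributing $O_d(B^{2/d_{Y,i}}) \le O_d(B^2)$, summed over $O_d(1)$ factors; and the linear-in-$Y$ factors contribute $O_d(B^2)$ trivially. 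Reconciling these exponents so that the final bound is exactly $B^{2(n-1+1/d_Y)}$ rather than something slightly larger is the delicate part, and it is where the separate treatment of "a factor linear in $Y$" versus "every factor has $Y$-degree $\geq 2$" — flagged in the lemma's preamble — does the real work: a linear-in-$Y$ factor $Y - \phi(T_1, \dots, T_n)/\psi(\cdots)$ has its root determined as a rational function of the $t_i$, so it contributes at most $O_n(B^{2n})$... no, at most the number of specialisations, which must be folded into the overall count, while every quadratic-or-higher factor is governed by a genuine $B^{2/d_{Y,i}}$-type saving. I would structure the induction so that the linear-factor contribution is handled by a direct count (it amounts to counting $(t_1, \dots, t_n)$ on a hypersurface, giving $O_{d,n}(B^{2(n-1)})$, which is $\le B^{2(n-1+1/d_Y)}$) and the higher-factor contribution by the Lemma~\ref{thm:number_rational_specialisation_with_zero} machinery above.
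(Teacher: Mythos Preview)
Your inductive framework and the irreducible/reducible case split are correct in spirit, and you correctly identify that the linear-in-$Y$-factor sub-case is where the real difficulty lies. However, your handling of that sub-case has a genuine gap.

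The problem is this: once a specialisation $g$ acquires a factor linear in $Y$, say $cY - \phi(T_1)$ (working with the paper's convention of specialising all but one $T$-variable), then for \emph{every} value of the remaining $t_1$ the polynomial $g(Y,t_1)$ has a rational root. There is no hypersurface constraint on $(t_1,\dots,t_n)$; your claim that ``it amounts to counting $(t_1,\dots,t_n)$ on a hypersurface, giving $O_{d,n}(B^{2(n-1)})$'' is simply false. If $U$ is the set of specialisations $(t_2,\dots,t_n)$ for which $g$ has a linear factor, this case contributes $\abs{U}\cdot O(B^2)$, and Corollary~\ref{thm:hilbert_irreducibility} only gives $\abs{U}\ll B^{2n-3}(\log\norm{f}+\log B)^5$, so the product is of order $B^{2n-1}$. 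This exceeds the target $B^{2(n-1+1/d_Y)}$ whenever $d_Y\geq 3$, precisely the range where the lemma is supposed to improve on Theorem~\ref{thm:rational_points_hypersurface}. (Your parallel $O_d(B^2)$ claim for the count of $(t_1,t_2)$ in the reducible-$g$ case, via Schwartz--Zippel, fails for the same reason: with a linear factor present, every $(t_1,t_2)$ contributes.)

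The paper resolves this with an idea you do not have: rather than bounding $\abs{U}$ via Hilbert irreducibility, it bounds $\abs{U}$ by \emph{induction on $n$}. By \cite[Corollary~1.11]{cluckersImprovementsDimensionGrowth2025} there exists an integer $t_1^\ast$, with $\abs{t_1^\ast}$ polynomial in $\log\norm{f}$, such that $f_{t_1^\ast}\coloneq f(Y,t_1^\ast,T_2,\dots,T_n)$ remains irreducible. If $(t_2,\dots,t_n)\in U$, evaluating the linear factor of $g(Y,T_1)$ at $T_1=t_1^\ast$ shows that $f_{t_1^\ast}(Y,t_2,\dots,t_n)$ has a rational root; hence $\abs{U}\leq R(f_{t_1^\ast},B)$, and the inductive hypothesis (in $n-1$ variables) gives $\abs{U}\ll_{d,n} B^{2(n-2+1/d_Y)}(\log\norm{f}+\log B)^7$. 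Multiplying by the trivial $O(B^2)$ for $t_1$ recovers exactly the target. This device --- trading control of the bad set for control of $R$ in one fewer variable via a well-chosen auxiliary specialisation --- is the missing ingredient in your sketch.

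Two structural remarks. First, the paper specialises all but \emph{one} $T$-variable (not all but two as you do), so that $g\in\integer[Y,T_1]$ and the $n=1$ base case applies directly to $g$ and to each of its irreducible factors; this avoids the nested bookkeeping that visibly trips you up. Second, for the base case the paper invokes Theorem~\ref{thm:rational_points_two_variable} after bounding $H(y)$ via the rational root theorem, rather than Lemma~\ref{thm:number_rational_specialisation_with_zero}; your route also works, with only a change in the logarithmic exponents.
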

\begin{proof}
    We adapt the proof of
    \cite[Lemma~7]{castilloHilbertsIrreducibilityTheorem2017}.
    The proof is by induction.

    The base case is \(n = 1\).
    Suppose that \(t \in \rational\) and \(f(Y, t)\) has a rational root \(y\).
    Let \(f_t\) be the polynomial obtained by clearing denominators
    in \(f(Y, t)\).
    Then \(\norm{f_t} \leq (d + 1) \norm{f} H(t)^d\).
    Since \(f\) has constant leading coefficient in \(Y\),
    we see that \(f_t\) is non-zero,
    so by the rational root theorem,
    we have \(H(y) \leq \norm{f_t} \leq (d + 1) \norm{f} H(t)^d\).
    So the number of \(t\) contributing to \(R(f, B)\) is bounded by
    \(\nrat(f, (d + 1) \norm{f} B^d, B)\).
    By Theorem~\ref{thm:rational_points_two_variable},
    we have
    \[
        R(f, B)
        \leq \nrat(f, (d + 1) \norm{f} B^d, B)
        \ll_d B^{2 / d_Y} (\log\norm{f} + \log B)^2.
    \]

    Now assume that the lemma has been proved for \(n - 1\).
    For \((t_2, \dots, t_n) \in \rational^{n - 1}\),
    let \(g \in \integer[Y, T_1]\) be the polynomial obtained
    by clearing denominators in \(f(Y, T_1, t_2, \dots, t_n)\).
    Since \(f\) has constant leading coefficient in \(Y\),
    we see that \(g\) also has constant leading coefficient in \(Y\)
    and \(\deg_Y g = d_Y\).
    If \(H(t_i) \leq B\) for all \(i\),
    then we have
    \begin{equation}\label{eq:log_norm_g}
        \log\norm{g} \ll_{d, n} \log\norm{f} + \log B.
    \end{equation}

    \textbf{Case 1:}
    First,
    consider the case where \(g\) is irreducible.
    Applying the \(n = 1\) case of the lemma to \(g\)
    and using \eqref{eq:log_norm_g} gives
    \begin{align*}
        R(g, B)
        &\ll_{d, n} B^{2 / \deg_Y g} (\log\norm{g} + \log B)^2\\
        &\ll_{d, n} B^{2 / d_Y} (\log\norm{f} + \log B)^2.
    \end{align*}
    There are \(O(B^{2 (n - 1)})\) values of \((t_2, \dots, t_n)\)
    such that \(H(t_i) \leq B\) for all \(i\),
    so the total contribution to \(R(f, B)\) from this case is
    \[
        O_{d, n}(B^{2 (n - 1 + 1 / d_Y)} (\log\norm{f} + \log B)^2).
    \]

    \textbf{Case 2:}
    Next,
    consider the case where \(g\) is reducible over \(\rational\).
    Since \(f\) has constant leading coefficient in \(Y\),
    every factor of \(g\) must have degree at least one in \(Y\)
    and the factors also have constant leading coefficient in \(Y\).
    Suppose that every irreducible factor of \(g\)
    has degree at least two in \(Y\).
    By
    \cite[Chapter~3, Proposition~2.3]{langFundamentalsDiophantineGeometry1983}
    and \eqref{eq:log_norm_g},
    every factor \(h\) of \(g\) satisfies
    \(\log\norm{h} \ll_{d, n} \log\norm{g} \ll_{d, n} \log\norm{f} + \log B\).
    Then applying the \(n = 1\) case of the lemma
    to each irreducible factor of \(g\) gives
    \[
        R(g, B) \ll_{d, n} B (\log\norm{f} + \log B)^2.
    \]
    By Corollary~\ref{thm:hilbert_irreducibility},
    there are
    \[
        O_{d, n}(B^{2 n - 3} (\log\norm{f} + \log B)^5)
    \]
    values of \((t_2, \dots, t_n)\) such that \(g\) is reducible
    and \(H(t_i) \leq B\) for all \(i\),
    so the total contribution to \(R(f, B)\) from these values is
    \[
        O_{d, n}(B^{2 (n - 1)} (\log\norm{f} + \log B)^7).
    \]

    \textbf{Case 3:}
    Finally,
    consider the case where \(g\) is reducible over \(\rational\)
    and there is a factor that is linear in \(Y\).
    Let \(U\) be the set of \((t_2, \dots, t_n) \in \rational^{n - 1}\)
    such that \(H(t_i) \leq B\) for all \(i\) and this case occurs.

    By \cite[Corollary~1.11]{cluckersImprovementsDimensionGrowth2025},
    there exists \(t_1 \in \integer\) such that
    \[
        f_{t_1}(Y, T_2, \dots, T_n) \coloneq f(Y, t_1, T_2, \dots, T_n)
    \]
    is irreducible and \(\abs{t_1}\) is at most polynomial in \(\log\norm{f}\),
    where the degree and coefficients depend on \(d\) and \(n\).
    Then \(\norm{f_{t_1}} \leq (d + 1) \norm{f} \abs{t_1}^d\),
    which implies that \(\log\norm{f_{t_1}} \ll_{d, n} \log\norm{f}\).
    Also, \(\deg_Y f_{t_1} = d_Y\)
    because \(f\) has constant leading coefficient in \(Y\).

    If \((t_2, \dots, t_n) \in U\),
    then
    \[
        f_{t_1}(Y, t_2, \dots, t_n) = f(Y, t_1, t_2, \dots, t_n) = g(Y, t_1)
    \]
    must have a rational root
    because \(g(Y, t_1)\) has a linear factor over \(\rational\).
    Therefore every \((t_2, \dots, t_n) \in U\)
    is counted by \(R(f_{t_1}, B)\).
    By induction,
    we have
    \begin{align*}
        \abs{U}
        &\leq R(f_{t_1}, B)\\
        &\ll_{d, n} B^{2 (n - 2 + 1 / \deg_Y f_{t_1})}
        (\log\norm{f_{t_1}} + \log B)^7\\
        &\ll_{d, n} B^{2 (n - 2 + 1 / d_Y)} (\log\norm{f} + \log B)^7.
    \end{align*}

    For each \((t_2, \dots, t_n) \in U\),
    we trivially bound the number of \(t_1\) contributing to \(R(f, B)\)
    by \(O(B^2)\).
    Thus the total contribution from this case is
    \[
        O_{d, n}(B^{2 (n - 1 + 1 / d_Y)} (\log\norm{f} + \log B)^7).
    \]

    This completes the proof of the lemma.
\end{proof}

We now prove Theorem~\ref{thm:rational_points_hypersurface_monic}.

Suppose that \((x_1, \dots, x_n)\) contributes to \(\nrat(f, B)\).
Then \(x_1\) is a rational root of the polynomial \(f(X_1, x_2, \dots, x_n)\).
By Lemma~\ref{thm:number_rational_specialisation_with_root_multivariable},
\(f(X_1, x_2, \dots, x_n)\) has a rational root for at most
\[
    O_{d, n}(B^{2 (n - 2 + 1 / d_1)} (\log\norm{f} + \log B)^7)
\]
values of \((x_2, \dots, x_n) \in \rational^{n - 1}\)
such that \(H(x_i) \leq B\) for all \(i\).
Since \(f\) is monic in \(X_1\),
we see that \(f(X_1, x_2, \dots, x_n)\) has degree \(d_1\),
so it has at most \(d_1\) rational roots.
That is,
for a fixed \((x_2, \dots, x_n)\),
there are at most \(d_1\) values of \(x_1\)
such that \((x_1, \dots, x_n)\) contributes to \(\nrat(f, B)\).
Hence
\[
    \nrat(f, B) \ll_{d, n} B^{2 (n - 2 + 1 / d_1)} (\log\norm{f} + \log B)^7.
\]

\section{Future research directions}
\label{sec:future_research_directions}

Finally,
we state some possible directions for future research.

We do not expect the exponent \(2 n - 3\)
in Theorem~\ref{thm:rational_points_hypersurface} to be optimal.
One possible direction for research would be
to either improve the exponent in the bound
or prove a matching lower bound for \(\nrat(f, B)\).
We believe that \(2 (n - 2 + 1 / d_1)\),
which is the exponent in Theorem~\ref{thm:rational_points_hypersurface_monic},
is the optimal exponent.

In Theorem~\ref{thm:rational_points_hypersurface},
we assume that \(f\) has degree at least \(2\) in \(X_1\),
and in Theorem~\ref{thm:rational_points_hypersurface_monic},
we assume that \(f\) is monic in \(X_1\).
We do not expect these assumptions to be necessary
for these bounds to hold,
so another possible direction for research
would be to weaken these assumptions.

In Theorems~\ref{thm:rational_points_hypersurface},
\ref{thm:rational_points_hypersurface_monic}
and \ref{thm:hilbert_irreducibility_non_uniform},
one could make the dependence on \(d\) explicit,
improve the dependence on \(\log\norm{f}\)
or improve the exponent of \(\log B\).

Another possible direction for research
would be to generalise the main results to varieties or to global fields.

We considered a quantitative form of Hilbert's irreducibility theorem,
where for an irreducible polynomial
\(f \in \integer[T_1, \dots, T_s, Y_1, \dots, Y_r]\),
we bounded the number of \((t_1, \dots, t_s) \in \rational^s\)
of bounded height
such that \(f(t_1, \dots, t_s, Y_1, \dots, Y_r)\) is irreducible.
Another form of Hilbert's irreducibility theorem concerns the Galois group
of the specialisations.
One possible direction for research
would be to obtain quantitative bounds in this setting.
More precisely,
given an irreducible polynomial \(f \in \integer[T_1, \dots, T_s, Y]\)
and group \(G\),
one can ask for a bound on the number of \((t_1, \dots, t_s) \in \rational^s\)
of bounded height such that \(f(t_1, \dots, t_s, Y)\) has Galois group \(G\).

\printbibliography
\end{document}